\newcommand{\eps}{\varepsilon}
\newcommand{\C}{\mathbb{C}}
\newcommand{\R}{\mathbb{R}}
\newcommand{\RN}{{\mathbb{R}^N}}
\newcommand{\RD}{{\mathbb{R}^2}}
\newcommand{\RT}{{\mathbb{R}^3}}
\newcommand{\de}{\partial}
\renewcommand{\le}{\leqslant}
\renewcommand{\ge}{\geqslant}
\renewcommand{\a }{\alpha }
\renewcommand{\d }{\delta }
\newcommand{\vfi}{\varphi}
\newcommand{\g }{\gamma }
\renewcommand{\l }{\lambda}
\newcommand{\n }{\nabla }
\newcommand{\s }{\sigma }
\renewcommand{\t}{\theta}
\newcommand{\G}{\Gamma}
\renewcommand{\H}{H^1(\RN)}
\newcommand{\Hr}{H^1_{\rm rad}(\RN)}
\newcommand{\Ha}{H^1_\a(\RN)}
\newcommand{\Hal}{H^1_{\a ,\l}}
\newcommand{\Har}{H^1_{\a ,{\rm rad}}(\RN)}
\newcommand{\cg}{\mathcal{G}}
\newcommand{\N}{\mathbb{N}}
\renewcommand{\C}{\mathbb{C}}
\newcommand{\re}{\operatorname{Re}}
\newcommand{\ird }{\int_{\RD}}
\newcommand{\irn }{\int_{\RN}}
\def\bbm[#1]{\mbox{\boldmath $#1$}}
\newcommand{\beq }{\begin{equation}}
\newcommand{\eeq }{\end{equation}}
\newcommand{\weakto}{\rightharpoonup}
\newcommand{\dis}{\displaystyle}
\newcommand{\gl}{\mathcal{G}_{\lambda}}
\newcommand{\gr}{\mathcal{G}_{\lambda, {\rm reg}}}
\newcommand{\gs}{\mathcal{G}_{{\rm sing}}}
\newcommand{\im}{\operatorname{Im}}
\newcommand{\ef}{\eqref}
\newcommand{\lr}[1]{\langle#1\rangle}
\newcommand{\red}[1]{{\color{black}#1}}
\providecommand\@dotsep{5}
\def\listtodoname{List of Todos}
\def\listoftodos{\@starttoc{tdo}\listtodoname}
\newtheorem{theorem}{Theorem}[section]
\newtheorem{lemma}[theorem]{Lemma}
\newtheorem{remark}[theorem]{Remark}
\newtheorem{proposition}[theorem]{Proposition}
\numberwithin{equation}{section}
\title
[
Nonlinear scalar field equation with point interaction
]
{
Nonlinear scalar field equation with point  interaction
}
\author[A. Pomponio]{Alessio Pomponio}
\author[T. Watanabe]{Tatsuya Watanabe}
\address[A. Pomponio]{\newline\indent
Dipartimento di Meccanica, Matematica e Management
\newline\indent 
Politecnico di Bari
\newline\indent
Via Orabona 4,  70125  Bari, Italy}
\email{alessio.pomponio@poliba.it}
\address[T. Watanabe]{\newline\indent 
Department of Mathematics, 
\newline\indent 
Faculty of Science, Kyoto Sangyo University,
\newline\indent
Motoyama, Kamigamo, Kita-ku, Kyoto-City, 603-8555, Japan}
\email{tatsuw@cc.kyoto-su.ac.jp}
\thanks{}
\subjclass[2010]{35J20, 35Q55}
\date{}
\keywords{Nonlinear Schr\"odinger equation, point interaction, variational method}
\begin{document}

\begin{abstract}
This paper is devoted to the study of the nonlinear scalar field equation 
with a point interaction at the origin in dimensions two and three.
\red{More precisely, in dimension three and in the repulsive case, 
by applying the mountain pass theorem and the technique of adding one dimensional space, 
we prove the existence of a nontrivial singular solution for a wide class of nonlinearities. 
In dimension $2$, or in dimension $3$ and in the attractive case,
the existence result requires the Ambrosetti-Rabinowitz condition and the arguments become simpler.}
We also establish the Pohozaev identity by proving a pointwise 
estimate of the gradient near the origin.
Some qualitative properties of nontrivial solutions are also given.
\end{abstract}

\maketitle

\section{Introduction}
In this paper, we study the following 
nonlinear elliptic problem with $\delta$-interaction
\begin{equation} \label{eq:delta} 
\begin{cases}
\red{-\Delta_\a u }= g(u) & \hbox{  in }\RN, \\
u(x)\to 0& \hbox{ as }|x|\to +\infty,
\end{cases}
\end{equation}
\red{where $N=2, 3$ and $-\Delta_\a$ is the Schr\"odinger operator with a point interaction at the origin 
parametrized by the interaction strength $\a \in \R$ (see \ef{del_alpha} below for the precise definition). 
This operator can be considered as the two or three dimensional version of the Schr\"odinger operator 
$-\frac{d^2}{dx^2} + \a \d_0$ in one dimension where $\delta_0$ is the delta function supported at the origin.
Notably, the scalar field equation $-\Delta u = g(u)$ 
can be regarded as a limit equation of \ef{eq:delta} as $\alpha \to +\infty$.}
Equation \ef{eq:delta} can be obtained by 
considering the standing wave $\psi (t,x)= e^{i \omega t} u(x)$
for the nonlinear Schr\"odinger equation (NLS)
\begin{equation} \label{NLS}
\red{i \psi_t + \Delta_\a \psi  + h( |\psi|) \frac{\psi}{|\psi|} =0,}
\end{equation}
provided that $g(s)=h(s)- \omega s$ and $\omega \in \R$.
NLS with point interaction has been recently proposed 
as an effective model for a Bose-Einstein Condensate (BEC) 
in the presence of defects or impurities.
See \cite{SM, SCMS} for the physical background.
In the 1D case,
there has been a lot of works for \ef{eq:delta} and \ef{NLS},
such as the existence of a ground state solution
and the (in)stability of standing waves;
we refer to \cite{BC, FJ, FOO, KO} and references therein.
On the other hand, the higher dimensional case is less studied.
2D problem has been studied 
for the pure power case $h(s) = |s|^{p-2}s$ in \cite{ABCT2, FGI},
while 3D problem with $h(s) = |s|^{p-2}s$
has been investigated in \cite{ABCT3}.
See also \cite{Ten} for a survey. 
Concerning with time-dependent problems in higher dimensional case,
we refer to \cite{CFN1, CFN2, FN, GMS} and references therein.
In \cite{OP}, instead, existence and asymptotic behavior is considered 
for a system of coupled nonlinear Schr\"odinger equations with point interaction.

The purpose of this paper is to consider \ef{eq:delta} for general $g$, 
in the spirit of \cite{BL}, prove the existence of a nontrivial solution 
and investigate qualitative properties of any nontrivial solutions of \ef{eq:delta}.

Equation \ef{eq:delta} is formal since the delta interaction is not
a small perturbation of $-\Delta$ in general.
A rigorous formulation is given through the self-adjoint extension
of the operator $-\Delta|_{C_0^{\infty}(\R^N \setminus \{0 \})}$.
Then it is known that there exists a family $\{ - \Delta_{\alpha} \}_{\alpha \in \R}$
of self-adjoint operators which realize all point perturbations of $-\Delta$;
see \cite{AGH, AGHH1, AGHH2, AH}.
As a consequence, the domain of $-\Delta_{\alpha}$ is given by 
\begin{align*}
D(-\Delta_\a) &:=
\big\{ u\in L^2(\RN): \text{there exist} \ q=q(u) \in \C \text{ and }\l >0\text{ s.t. } \\
&\qquad \qquad
\phi_\l:=u-q(u)\cg_\l\in H^2(\RN) \text{ and }\phi_\l(0)=(\a +\xi_\l)q(u) \big\},
\end{align*}
where $\cg_\l$ is the Green's function of $-\Delta +\l$ on $\R^N$, 
\begin{equation}\label{xi}
\xi_\l:=
\begin{cases}
\dis \frac{\sqrt{\l}}{4\pi} &\ (N=3), \\ \smallskip
\dis \frac{\log\left(\frac{\sqrt{\l}}{2}\right)+\g}{2\pi} &\  (N=2),
\end{cases}
\end{equation}
and $\g $ is the Euler-Mascheroni constant.
Moreover the action is defined by
\begin{equation} \label{del_alpha}
-\Delta_\a u:= -\Delta \phi_\l - q(u) \l \cg_\l, \quad \text{for all } u\in D(-\Delta_\a).
\end{equation}
It is also known that $\sigma_{ess}(-\Delta_{\alpha})=[0,\infty)$.
Moreover when $N=2$, or $N=3$ and $\alpha<0$, 
$-\Delta_{\alpha}$ has exactly one negative eigenvalue $-\omega_{\alpha}$
which is given by
\begin{equation}\label{omega-al}
\omega_\a:=\begin{cases}
4e^{-4\pi \a -2\g } &\ \text{for }N=2 \ \text{and} \ \a \in \R, \\
(4\pi \a)^2 &\  \text{for }N=3 \ \text{and} \ \a <0. 
\end{cases}
\end{equation}
For convenience, let us put 
$$
\omega_{\alpha}:=0\qquad\text{ when } N=3\text{ and }\alpha \ge 0.
$$
By the definition of $\xi_\l$ in \ef{xi}, 
we find that $\alpha + \xi_\l >0$ for any $\l > \omega_{\alpha}$.
Under these preparations, 
the rigorous version of \ef{eq:delta} can be formulated as follows:
\begin{equation} \label{eq2} 
\begin{cases}
-\Delta \phi_\l - \l q(u) \gl = g(u) & \hbox{  in } L^2(\RN), \\
u\in D(-\Delta_\a).
\end{cases}
\end{equation}

The function $u \in D(-\Delta_{\alpha})$ consists of a \textit{regular part} $\phi_\l$,
on which $-\Delta_{\alpha}$ acts as the standard Laplacian, 
and a \textit{singular part} $q(u) \gl$,
on which $-\Delta_{\alpha}$ acts as the multiplication by $-\l$.
These two components are coupled by the boundary condition 
$\phi_\l(0)= (\alpha + \xi_\l)q(u)$.
The strength $q=q(u)$ is called a \textit{charge} of $u$.
In particular, we have that 
\[
\lr{-\Delta_\a u,u}
= \|\n \phi_\l\|_2^2+\l \|\phi_\l \|_2^2 - \lambda \| u\|_2^2
+(\a +\xi_\l)|q(u)|^2.
\]
As observed in \cite[Remark 2.1]{ABCT2}, $\l$ is a free parameter
and it does not affect the definition of $-\Delta_{\alpha}$ 
nor the charge $q(u)$; see also \ef{eq:2.3} below.
It is also remarkable that $-\Delta|_{C_0^{\infty}(\R^N \setminus \{0 \})}$
is essentially self-adjoint for $N \ge 4$ and $\gl \in L^2(\R^N)$ only if $1 \le N \le 3$,
which means that $\delta$-interaction makes sense only when $1 \le N \le 3$.

As mentioned above, the existence of a ground state solution of \ef{eq2}
and its qualitative properties  
for the case $g(s)= - \omega s +|s|^{p-2}s$ 
have been established in \cite{ABCT2, ABCT3, FGI}.
Their proof heavily rely on the homogeneity of the nonlinear term,
which enables us to characterize the ground state solution 
as a minimizer of the Nehari manifold.
Our purpose is to extend their existence results for a wide class of nonlinearities.
Especially we aim to obtain the existence of a nontrivial singular solution
\textit{without using the Nehari manifold}.
We also establish the Pohozaev identity for \ef{eq2}, 
which is independently interesting and useful for further investigations.

\medskip
To state our main theorems, let us define 
the energy space associated with \ef{eq2} by 
\begin{equation*}
\Ha\!:=\!
\left\{ u\!\in\! L^2(\RN)\!:\! \text{there exist } 
q=q(u)\! \in \!\C \text{ and }\l >0\text{ s.t. } 
\phi_\l:=u-q(u) \cg_\l\!\in \! H^1(\RN) \right\}\!.
\end{equation*}
We remark that even if we work on this low regularity space,
$q(u)$ is independent of $\l$ and uniquely determined,
as shown in Lemma \ref{lem:2.3} below.
Therefore, in the definition of $\Ha$, 
we do not stress the dependence of $q(u)$ with respect to $\l$.

For any $\l>\omega_\a$, we define the related quadratic form by
\begin{equation*}
\lr{(-\Delta_\a+\l )u,u} := \|\n \phi_\l\|_2^2+\l \|\phi_\l \|_2^2
+(\a +\xi_\l)|q(u)|^2,
\end{equation*}
for $u=\phi_\l+q(u)\cg_\l \in \Ha$.
Here $\langle \cdot, \cdot \rangle$ denotes the standard $L^2$-inner product.
We also put
\begin{equation*} \label{norm}
\|u\|_{\Hal}^2:=\lr{(-\Delta_\a+\l )u,u}
= \|\n \phi_\l\|_2^2+\l \|\phi_\l \|_2^2 
+(\a +\xi_\l)|q(u)|^2.
\end{equation*}
Clearly if $q(u)=0$, then $\|u\|_{\Hal}$ coincides with 
the norm $\|  u \|_{H^1}$.
Moreover it also holds that
\begin{equation} \label{equiv}
\| u\|_{H^1_{\alpha,\lambda_1}} \sim \| u \|_{H^1_{\alpha,\l_2}}
\quad \text{for} \ \ \omega_{\alpha} <\l_1 < \l_2.
\end{equation}
See \cite{FGI} for details.

On the nonlinearity $g$, we require that
\begin{enumerate} [label=(g\arabic{*}),ref=g\arabic{*}]
\item \label{g1} $g\in C([0,\infty),\R)$;
\item \label{g2} there exists $\omega \in (\omega_\a, +\infty)$ such that
$$ 
-\infty <\liminf_{s\to 0^+} \frac{g(s)}{s} 
\le \limsup_{s \to  0^+} \frac{g(s)}{s} =-\omega;
$$
\item \label{g3} it holds that
\begin{equation*} 
\displaystyle -\infty < \lim_{s \to +\infty}
\frac{g(s)}{s^{p-1}} \le 0 \quad\text{for some }
\begin{cases}
2<p<3 &\  (N=3), \\
p>2 &\  (N=2);
\end{cases}
\end{equation*}
\item \label{g4} there exists $\zeta>0$ such that $G(\zeta) >0$,
where $G(s)=\int_0^{s} g(\tau) \,d\tau $. 
\end{enumerate}
We extend $g$ and $G$ to the complex plane by setting, by an abuse of notation,
\[
g(u)= g(|u|) \frac{u}{|u|} \quad \text{and} \quad G(u)=G(|u|),
\qquad \text{for} \ u \in \C, u \ne 0.
\]
Then $g$ is odd and $G$ is even on $\R$.
Moreover $\im \{ g(u)\bar{u} \}=0$ and
$g$ is gauge invariant, i.e. $g(e^{i \theta} s) = e^{i \theta} g(s)$,
for $\theta \in \R$ and $s \in \R$.
We emphasize that we can treat a wide class of nonlinearities,
such as \textit{double power} nonlinearity 
$g(s) = -\omega s - |s|^{p_1-2}s + |s|^{p_2-2}s$,
$g(s) = -\omega s + \mu |s|^{p_1-2}s \pm |s|^{p_2-2}s$, 
with $2<p_1<p_2$ and $p_2<3$, if $N=3$, and for suitable $\mu>0$
and superlinear nonlinearity $g(s)= - \omega s + |s|^{p-2}s \log ( |s| +1)$, 
with $2<p$ and $p<3$, if $N=3$. 

\red{For our existence results, however, as explained in Remark \ref{rem:5.7}, 
in the case $N=3$, $\alpha \le 0$ or $N=2$, in place of \eqref{g4},
we need to require a stronger assumption, namely, 
the \textit{Ambrosetti-Rabinowitz} growth condition:
\begin{enumerate} [label=(g\arabic{*}),ref=g\arabic{*}]
\setcounter{enumi}{4}
\item \label{g5} 
there exists $\beta > 2$ such that for $h(s) := g(s) + \omega s$, it holds that
\[
0 < \beta H(s) \le h(s)s \quad \text{for all} \ s >0.
\]
\end{enumerate}
By the extension of $g$, it also follows that
$0< \beta H(u) \le h(u) \bar{u}$,  for any  $u \in \C$, $u \ne 0$.
Condition \ef{g5} implies that $H(s)$ has a super-quadratic growth at infinity.
Typical examples of $h(s)$ satisfying \ef{g5} except from the power nonlinearity
are given by $h(s)= |s|^{p-2}s \log (|s|+e)$ or
$h(s)= \frac{|s|^{p-2}s}{1+|s|^{p-q}}$
for $2<p,q$ and $p<3$ if $N=3$. (See also \cite[Section 2.5]{BadSer}.)
}

We define the energy functional $I: \Ha \to \R$ by
\begin{align*}
I(u)&:= \frac 12\lr{-\Delta_\a u,u} - \irn G(u) \,dx
\\
&\ = \frac 12\|\n \phi_\l\|_2^2+\frac \l 2\|\phi_\l \|_2^2-\frac \l 2\|u\|_2^2
+\frac 12(\a +\xi_\l)|q(u)|^2 - \irn G(u) \,dx,
\end{align*}
for $\lambda >0$ and $u=\phi_\l+q(u)\cg_\l\in \Ha$.
It is important to note that the value of $I$ is independent of 
the choice of $\l$.
We will see in Proposition \ref{prop:2.8} below that
any solution $u$ of \ef{eq2} is a critical point of $I$.
On the other hand,
we will also show in Proposition \ref{prop:2.8}
that any critical point $u= \phi_\l + q(u) \gl$ of $I$ is a weak solution of 
\begin{equation} \label{eq}
-\Delta \phi_\l - \l q(u) \gl = g(u) \quad \text{in} \ \R^N,
\end{equation}
that is, $u$ satisfies
\[
\re\left\{\langle \nabla \phi_\l, \nabla \psi_\l \rangle 
+ \l \langle \phi_\l, \psi_\l \rangle - \l \langle u,v \rangle 
+(\alpha + \xi_\l ) q(u) \overline{q(v)} \right\}
=\re \irn g(u) \bar{v} \,dx, 
\]
for all $ v \in \Ha$. 
Here $\re$ denotes the real part.
Moreover by Proposition \ref{prop:5.1} below, 
any weak solution of \ef{eq} satisfies the boundary condition
\begin{equation} \label{bdry}
\phi_\l(0)= (\alpha + \xi_\l) q(u).
\end{equation}
Thus by the definition of $D(-\Delta_{\alpha})$, 
a solution $u= \phi_\l + q(u) \gl$ of \ef{eq} satisfying \ef{bdry}
is actually a solution of the original problem \ef{eq2}
\textit{only if} $\phi_\l$ belongs to $H^2(\R^N)$.
Therefore, a critical point of $I$ is \textit{not} a solution of the original problem \ef{eq2},
in case $H^2$-regularity of weak solutions cannot be established.
As we will see in the following  Remark \ref{rem:1.2}, 
this strange phenomenon may occur in three dimensions. 
We also mention that the constant $\alpha$ in \ef{eq:delta}
does not appear directly in \ef{eq}
but is included in the boundary condition \ef{bdry}.

In this setting, first we study the relation between weak solutions of \eqref{eq}, 
the boundary condition \eqref{bdry} and solutions of \eqref{eq2}. 
To this aim, we have to analyse the regularity of solutions 
discovering that the situation is more delicate in dimension $N=3$
(see Remark \ref{rem:1.2} below). 
These regularity results will be also useful to establish a Pohozaev type identity. 
More precisely, we are able to obtain the following results.

\begin{theorem} \label{main2}
Assume \eqref{g1}--\eqref{g3}
and let $u=\phi_\l+q(u)\cg_\l$ be any nontrivial weak solution of \ef{eq}.
Then $\phi_\l$ is continuous at the origin,
$\phi_\l \not\equiv 0$ for any $\l>0$,
$q(u)$ is non-negative up to phase shift
and the boundary condition \ef{bdry} holds. 
Moreover $u$ satisfies the Pohozaev identity:
\begin{equation} \label{eq:5.6}
\begin{aligned}
0 &= \frac{N-2}{2}\left\|\nabla \phi_\lambda\right\|_2^2
+\frac{(N-2) \lambda}{2} \left(\left\|\phi_\l\right\|_2^2-\| u\|_2^2\right)
- \l  \| \gl \|_2^2 |q(u)|^2 \\
&\quad +(N-2)(\alpha + \xi_\l) |q(u)|^2
-N \irn G(u) \,dx.
\end{aligned} 
\end{equation}

Supposing further that $N=2$, or $N=3$ and \eqref{g3} holds with $2<p< \frac{5}{2}$, 
then any weak solution of \ef{eq} 
is actually a solution of the original problem \ef{eq2}. 
\end{theorem}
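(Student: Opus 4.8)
The plan is to reduce the final claim to a single regularity statement. By the first part of the theorem we already know, for a nontrivial weak solution $u=\phi_\l+q(u)\cg_\l$ of \ef{eq}, that $\phi_\l$ is continuous at the origin, that $q(u)$ may be taken non-negative, and that the boundary condition \ef{bdry} holds; hence, by the definition of $D(-\Delta_\a)$ and \ef{del_alpha}, $u$ solves the original problem \ef{eq2} as soon as the regular part $\phi_\l$ belongs to $H^2(\RN)$. Since $\phi_\l$ solves $-\Delta\phi_\l=g(u)+\l q(u)\cg_\l$ in $\RN$ (this is \ef{eq}) and $\cg_\l\in L^2(\RN)$ for $N\le 3$, everything reduces to showing that $g(u)\in L^2(\RN)$: indeed $-\Delta\phi_\l\in L^2(\RN)$ together with $\phi_\l\in L^2(\RN)$ gives $(1+|\xi|^2)\widehat{\phi_\l}\in L^2$, i.e. $\phi_\l\in H^2(\RN)$.

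First I would record the growth bound coming from \ef{g1}--\ef{g3}: there is $C>0$ with $|g(s)|\le C(s+s^{p-1})$ for all $s\ge 0$, with $|g(s)|\le Cs$ for $s$ near $0$ and $|g(s)|\le Cs^{p-1}$ for $s$ bounded away from $0$. Then I would fix a small $r_0>0$ and treat the two regions $\{|x|>r_0\}$ and $B_{r_0}$ separately. On $\{|x|>r_0\}$ the Green's function $\cg_\l$ is smooth and exponentially decaying, hence belongs to every $L^q$ there, while $\phi_\l\in H^1(\RN)$ embeds into $L^{2^*}(\RN)$ (any finite exponent if $N=2$); thus $u\in L^q(\{|x|>r_0\})$ for $q\in[2,2^*]$, and since $2(p-1)\le 2^*$ whenever $N=3$ and $p<3$ (and always when $N=2$), we get $g(u)\in L^2(\{|x|>r_0\})$ with no extra hypothesis on $p$.

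The hard part — and the only place where the stronger restriction on $p$ is used — is the estimate on $B_{r_0}$. Here, using that $\phi_\l$ is bounded near the origin (by its continuity there) together with the asymptotics $\cg_\l(x)\le C|x|^{-1}$ if $N=3$ and $\cg_\l(x)\le C(1+|\log|x||)$ if $N=2$, one obtains $|u(x)|\le C|x|^{-1}$ (resp.\ $|u(x)|\le C(1+|\log|x||)$) on $B_{r_0}$. Since the $(p-1)$-power is the dominant term of $|g(u)|$ at the singularity, this yields $|g(u(x))|^2\le C|x|^{-2(p-1)}$ when $N=3$ and $|g(u(x))|^2\le C(1+|\log|x||)^{2(p-1)}$ when $N=2$. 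The first expression is integrable on $B_{r_0}\subset\RT$ exactly when $2(p-1)<3$, i.e. $p<\tfrac52$, which is precisely the assumption; a logarithmic power is always locally square-integrable in $\RD$, so no restriction on $p$ is needed when $N=2$. This sharp balance between the $|x|^{-1}$ (resp.\ logarithmic) blow-up of $u$ forced by a nonzero charge, the superlinear exponent $p$, and the dimension is the crux of the argument; the rest is routine.

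Putting the two regions together gives $g(u)\in L^2(\RN)$, hence $-\Delta\phi_\l=g(u)+\l q(u)\cg_\l\in L^2(\RN)$ and therefore $\phi_\l\in H^2(\RN)$. Combined with the boundary condition \ef{bdry}, this shows $u\in D(-\Delta_\a)$, and the identity $-\Delta\phi_\l-\l q(u)\cg_\l=g(u)$ now holds in $L^2(\RN)$; by \ef{del_alpha} this is exactly the assertion that $u$ solves \ef{eq2}.
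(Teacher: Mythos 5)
Your argument for the \emph{final} paragraph of the theorem — that under the extra hypothesis on $p$, a weak solution of \eqref{eq} is actually a solution of \eqref{eq2} — is correct and essentially matches the paper's Proposition \ref{prop:5.1}-(iv). Both you and the paper reduce to showing that the right-hand side $f_\l=g(u)+\l q(u)\gl$ of $-\Delta\phi_\l=f_\l$ lies in $L^2(\RN)$, and both identify the borderline as $\gl^{p-1}\in L^2(\RT)\Leftrightarrow 2(p-1)<3\Leftrightarrow p<\tfrac52$. Your split into $\{|x|>r_0\}$ and $B_{r_0}$ is a perfectly reasonable way of organizing the same estimate, and your Fourier-side closing step ($-\Delta\phi_\l,\phi_\l\in L^2\Rightarrow\phi_\l\in H^2$) is standard and fine.

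The problem is that you have only proved that one paragraph. You open by saying ``by the first part of the theorem we already know'' that $\phi_\l$ is continuous at the origin, that $q(u)$ is non-negative up to phase shift, and that the boundary condition \eqref{bdry} holds — but those statements \emph{are} part of the theorem you were asked to prove, so taking them as given is circular. More seriously, you never address the Pohozaev identity \eqref{eq:5.6}, which is the substantial content of Theorem \ref{main2} and what the paper spends the most effort on. Its proof is genuinely delicate: because $\nabla\phi_\l$ is unbounded at the origin when $N=3$, the standard multiplication by $x\cdot\nabla\bar u$ and integration over annuli $\{\eps\le|x|\le R\}$ produces boundary integrals at $|x|=\eps$ whose convergence as $\eps\to0^+$ is not automatic. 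The paper first establishes the pointwise estimate $\sup_{|x|=\eps}|\nabla\phi_\l(x)|=O(\eps^{2-p})$ (Lemma \ref{lem:5.4}), then verifies term by term in Lemma \ref{lem:5.5} that all the inner boundary contributions vanish, culminating in the identification \eqref{eq:5.14} with the boundary condition. None of this appears in your proposal, nor do the proofs that $\phi_\l\not\equiv0$ and that $q(u)$ can be taken non-negative (paper's Lemma \ref{lem:5.3}). You should either supply these pieces or make explicit from the outset that you are only proving the final regularity assertion.
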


\begin{remark} \label{rem:1.2}
In the case $N=3$ and $\frac{5}{2} \le p < 3$,
we cannot expect that $\phi_\l \in H^2(\R^3)$ if $q(u) \ne 0$.
In fact if $\phi_\l \in H^2(\R^3)$,
we must have $\gl^{p-1} \in L^2(\R^3)$
because, roughly speaking, $g(s)$ behaves like $s^{p-1}$ at infinity.
However if $\frac{5}{2} \le p < 3$, 
it follows that $\gl^{p-1} \not\in L^2(\R^3)$.
In other words, when $N=3$ and $\frac{5}{2} \le p < 3$,
any weak solution of \ef{eq} cannot be a solution of the original problem \ef{eq2}
unless $q(u)=0$.
\end{remark}

We remark that the proof of the Pohozaev identity \eqref{eq:5.6}
is not straightforward because of the singularity of solutions of \ef{eq2}.
Indeed as is well-known, 
the Pohozaev identity can be obtained 
if we multiply the equation by $x \cdot \nabla u$. 
However since $u$ is singular, it is not clear 
whether all terms are integrable.
Especially we need to take care of the singularity of 
$\nabla \phi_\l$ and $\nabla \gl$ near the origin.
To overcome this difficulty,
a key is to establish the pointwise estimate of $| \nabla \phi_\l (x)|$ near the origin. 
As we will see in Lemma \ref{lem:5.4} below,
$| \nabla \phi_\l (x)|$ is unbounded at the origin when $N=3$.
Nevertheless, we are able to prove the convergence of
all terms to obtain the Pohozaev identity; 
see Lemma \ref{lem:5.5}.
Moreover we notice that the Pohozaev identity can be obtained 
by computing 
$\frac{d}{dt} I \left. \left( u \left( \frac{\cdot}{t} \right) \right) \right|_{t=1}=0$
formally; see Remark \ref{rem:2.6}.
We also mention that the Pohozaev identity for the case $N=2$ 
and for the power nonlinearity has been firstly obtained in \cite[Lemma 3.2]{FN}.
In this regard, \ef{eq:5.6} can be seen as a generalization of the result in \cite{FN}.

\medskip
The second main result of this paper concerns the existence 
of a nontrivial weak solution of \eqref{eq2} with positive charge.

\begin{theorem}\label{main}
Suppose that $N=3$ and $\alpha>0$.
Assume \eqref{g1}--\eqref{g4}. 
Then there exists a nontrivial weak solution $u_0 = \phi_\l+q(u_0)\cg_\l
\in \Ha$ of \ef{eq} with $q(u_0)>0$.
\end{theorem}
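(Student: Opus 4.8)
I would produce a \emph{radial} weak solution by a mountain pass argument for $I$ on $\Hart$, using the \emph{adding one dimensional space} technique to recover a bounded Palais--Smale sequence without an Ambrosetti--Rabinowitz condition, and then force $q(u_{0})>0$ by comparing the mountain pass level with the least energy of $-\Delta u=g(u)$. First, the geometry: from \eqref{g2}--\eqref{g3} one gets $-G(s)\ge\tfrac{\omega-\e}{2}s^{2}-C_{\e}s^{p}$ for all $s\ge0$, and since $\omega>\omega_\a=0$ and $\Hart\hookrightarrow L^{2}\cap L^{p}$ for $2<p<3$, \eqref{equiv} gives $I(u)\ge\rho_{0}>0$ on a small sphere $\{\|u\|_{\Hal}=r_{0}\}$. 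By \eqref{g4} there is $v$ with $\irn G(v)\,dx>0$, and (as in Remark \ref{rem:2.6}) for $N=3$
\begin{align*}
I\Big(u\big(\tfrac{\cdot}{t}\big)\Big)&=\frac{t}{2}\Big(\|\n\phi_{\l}\|_{2}^{2}+\l\|\phi_{\l}\|_{2}^{2}-\l\|u\|_{2}^{2}+\xi_{\l}|q(u)|^{2}\Big)\\
&\quad+\frac{\a t^{2}}{2}|q(u)|^{2}-t^{3}\irn G(u)\,dx,
\end{align*}
so $I(v(\cdot/t))\to-\infty$ as $t\to+\infty$; hence $I|_{\Hart}$ has a mountain pass structure with level $c>0$.

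Next, I would introduce $J:\R\times\Hart\to\R$, $J(\theta,u):=I\big(u(\cdot/e^{\theta})\big)$, which by the displayed formula is a finite combination of powers $e^{k\theta}$ ($k=1,2,3$) and $C^{1}$ functions of $u$, hence of class $C^{1}$. One checks that $J$ has a mountain pass geometry on $\R\times\Hart$ with the same level $c$ (a path for $I$ embeds as the path $\theta\equiv0$ for $J$ with the same maximum, and unscaling a path for $J$ gives a path for $I$ with the same maximum). The mountain pass theorem then provides a Palais--Smale sequence $(\theta_{n},u_{n})$ for $J$ at level $c$ with $(\theta_{n})$ bounded; setting $v_{n}:=u_{n}(\cdot/e^{\theta_{n}})$, the conditions $\de_{\theta}J\to0$ and $\de_{u}J\to0$ translate into
\[
I(v_{n})\to c,\qquad \mathcal{P}(v_{n})\to0,\qquad I'(v_{n})\to0\ \ \text{in}\ (\Hart)^{*},
\]
where $\mathcal{P}(v):=\tfrac{d}{dt}I(v(\cdot/t))|_{t=1}$ is precisely the right-hand side of the Pohozaev identity \eqref{eq:5.6} with $N=3$ evaluated at $v$.

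The main obstacle is the boundedness of $(v_{n})$ in $\Hart$: lacking \eqref{g5}, one cannot use an Ambrosetti--Rabinowitz truncation and must combine the three relations above with \eqref{g2}--\eqref{g3}, as for the classical scalar field equation. Here $\a>0$ is used in an essential way: after writing $v_{n}=\phi_{\l,n}+q(v_{n})\gl$ and using $\l\|\gl\|_{2}^{2}=\xi_{\l}/2$, the charge terms in $I(v_{n})-\tfrac13\mathcal{P}(v_{n})$ carry the favourable sign, and then the subcritical Gagliardo--Nirenberg inequality $\|\phi_{\l,n}\|_{p}^{p}\lesssim\|\n\phi_{\l,n}\|_{2}^{3(p-2)/2}\|\phi_{\l,n}\|_{2}^{3-p/2}$ (with $3-\tfrac p2<2$) together with $\|\gl\|_{p}<\infty$ controls $\|v_{n}\|_{\Hal}$. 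Once $(v_{n})$ is bounded, compactness is recovered in $\Hart$: the regular parts converge in $L^{t}(\R^{3})$ for $2<t<6$ by the Strauss embedding, the charges converge along a subsequence, and the part of $g$ which is only linear at the origin is handled through a Berestycki--Lions type splitting $g=g_{1}-g_{2}$, $g_{2}(s)=\mu s$ with $\omega_\a<\mu<\omega$, chosen so that $\tfrac12\|\cdot\|^{2}_{H^{1}_{\a,\mu}}$ is coercive and $G_{1}=G+\tfrac\mu2 s^{2}\le0$ near $0$. One concludes $v_{n}\to u_{0}$ in $\Hart$ along a subsequence, so $u_{0}$ is a critical point of $I|_{\Hart}$ with $I(u_{0})=c>0$, hence $u_{0}\neq0$; by the principle of symmetric criticality $u_{0}$ is a critical point of $I$ on $\Ha$, and by Proposition \ref{prop:2.8} a nontrivial weak solution of \eqref{eq}.

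Finally, the sign of the charge. By Theorem \ref{main2} we may assume (after a phase shift) $q(u_{0})\ge0$, so it remains to exclude $q(u_{0})=0$. Let $c_{\infty}$ be the least energy level of $-\Delta u=g(u)$ in $\R^{3}$, which under \eqref{g1}--\eqref{g4} is well defined, positive, and attained at a positive radial ground state $w$ (by \cite{BL}). Since every $u\in H^{1}(\R^{3})$ has $q(u)=0$ and $I(u)=\Iinf(u)$, restricting to $H^{1}(\R^{3})$-valued paths already gives $c\le c_{\infty}$. For the strict inequality, $(-\Delta+\l)w=g(w)+\l w$ yields $w(0)=\l\lr{w,\gl}+\lr{g(w),\gl}$, so that
\[
I(w+q\gl)=\Iinf(w)-q\,w(0)+O(q^{2})=c_{\infty}-q\,w(0)+O(q^{2})\qquad(q\to0^{+});
\]
inserting a small bump of $+q\gl$ near $t=1$ into the path $t\mapsto w(\cdot/t)$---whose maximum over $t>0$ is attained only at $t=1$, by the Pohozaev identity for $w$---strictly lowers its maximum below $c_{\infty}$, hence $c<c_{\infty}$. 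But if $q(u_{0})=0$, then $u_{0}\in H^{1}(\R^{3})$ is a nontrivial solution of $-\Delta u=g(u)$ with $\Iinf(u_{0})=I(u_{0})=c<c_{\infty}$, contradicting the minimality of $c_{\infty}$. Therefore $q(u_{0})>0$.
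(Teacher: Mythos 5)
Your overall architecture — mountain pass on $\Har$, the auxiliary functional $J(\theta,u)=I(u(\cdot/e^\theta))$ providing a Palais–Smale–Pohozaev sequence, symmetric criticality, and a final charge-positivity step — matches the paper's scheme. However, the boundedness argument has a genuine gap, which is precisely the point the paper identifies as the crux of the proof.

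You write that after computing $I(v_n)-\tfrac13\mathcal P(v_n)$ and using $\lambda\|\gl\|_2^2=\xi_\lambda/2$, ``the charge terms carry the favourable sign, and then the subcritical Gagliardo--Nirenberg inequality\ldots controls $\|v_n\|_{\Hal}$.'' This does not close. Writing $v_n=\phi_n+q_n\gl$, the identity reduces (for $N=3$) to
\[
I(v_n)-\tfrac13\mathcal P(v_n)
=\tfrac13\|\nabla\phi_n\|_2^2
-\tfrac{2\lambda}{3}\,\re\lr{\phi_n,q_n\gl}
+\tfrac16(\alpha+\xi_\lambda)|q_n|^2,
\]
and the middle term is a genuine mixed cross term bounded only by $|q_n|\,\|\phi_n\|_2\,\|\gl\|_2$, which involves $\|\phi_n\|_2$ rather than $\|\nabla\phi_n\|_2$. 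No combination of the favourable signs of the pure charge terms with a Gagliardo--Nirenberg estimate (which bounds $\|\phi_n\|_p$ but says nothing about $\|\phi_n\|_2$) controls this term. The paper makes exactly this point in the introduction: the naive $I-\tfrac1N\mathcal P$ combination does not bound $\|\nabla\phi_n\|_2$ or $q_n$ because of the term $\tfrac\lambda N(\|\phi_n\|_2^2-\|u_n\|_2^2)$. The actual proof (Lemma \ref{pr:bdd}) requires two distinct devices that your argument omits: (i) an $n$-dependent change of the decomposition parameter $\lambda\to\mu_n=\lambda/(\|u_n\|_2^2+\|\phi_n\|_2^2)$ so that $\mu_n\|u_n\|_2^2\le\lambda$ becomes controllable — and it is precisely here (after using Proposition \ref{prop:2.1}-\eqref{2.1-5} to evaluate $\mu_n\|\cg_{\mu_n}\|_2^2+\tfrac12(\alpha+\xi_{\mu_n})=\xi_{\mu_n}+\tfrac\alpha2$) that $\alpha>0$ is used, since $\xi_{\mu_n}$ may tend to $0$; followed by an estimate of $\|\nabla(\cg_{\mu_n}-\gl)\|_2$ to transfer back to the fixed-$\lambda$ decomposition; and (ii) a separate blow-up argument (the rescaling $v_n=u_n(t_n^{-1}\cdot)$, $t_n=\|u_n\|_2^{-2/N}$, combined with the Strauss compactness lemma and the auxiliary nonlinearity $h$) to bound $\|\phi_n\|_2$, which does not follow from Step (i). Your proposal contains neither.

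Your charge-positivity step, on the other hand, is a genuinely different and interesting route. The paper only proves $\sigma\le m_0$ (equation \eqref{eq:4.10}) and, in Remark \ref{rem:4.10}, explicitly declines to assert strict inequality; it instead derives the contradiction from the positivity of a ground state of \eqref{scalar} together with the boundary condition $\phi_\lambda(0)=(\alpha+\xi_\lambda)q(u_0)=0$. You instead perturb the optimal path $t\mapsto w(\cdot/t)$ by a localized $+q\gl$ bump near $t=1$, using $w(0)>0$ and the fact that the maximum of $I_0(w(\cdot/t))$ is strictly attained at $t=1$ to obtain $\sigma<m_0$ directly. Modulo checking the continuity and smallness estimates for the perturbed path in $\Har$, this is sound; it buys a cleaner contradiction at the cost of a path-surgery lemma, whereas the paper's argument buys simplicity at the cost of invoking the precise qualitative properties of complex-valued ground states (Lemma \ref{lem:4.8}).
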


\begin{theorem} \label{thm:1.4}
Suppose that $N=3$, $\alpha \red{\le} 0$ or $N=2$.
Assume \eqref{g1}--\eqref{g3} and \eqref{g5}. 
Then there exists a nontrivial weak solution $u_0 = \phi_\l+q(u_0)\cg_\l
\in \Ha$ of \ef{eq} with $q(u_0)>0$.
\end{theorem}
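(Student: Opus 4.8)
The plan is to obtain $u_0$ as a mountain pass critical point of $I$ on $\Ha$, exploiting that under \eqref{g5} the functional has the standard geometry and that $q(u)$ is controlled by the $H^1$-part of $u$, which is the crucial structural difference from Theorem \ref{main}. First I would verify the mountain pass geometry: using \eqref{g2} one gets $G(s)\le -\tfrac{\omega-\e}{2}s^2 + C|s|^p$ near zero and, combined with \eqref{g3}, a global bound $G(s)\le -\tfrac{\omega-\e}{2}s^2+C|s|^p$ with $p$ as in \eqref{g3}; then writing $I(u)=\tfrac12\|u\|_{\Hal}^2 - \tfrac{\lambda+\omega}{2}\|u\|_2^2 + \cdots$ for a convenient choice of $\lambda\in(\omega_\a,\omega)$ (so that $\lambda+\omega>0$ but the quadratic part stays coercive after absorbing $\|u\|_2^2$, using \eqref{equiv}), together with the Sobolev embedding $\Ha\hookrightarrow L^p(\RN)$ and the charge estimate $|q(u)|\lesssim\|u\|_{\Hal}$, yields $I(u)\ge c\|u\|_{\Hal}^2-C\|u\|_{\Hal}^p>0$ on a small sphere. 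For the mountain pass level to be finite one needs a point where $I<0$: taking $w$ with $w=q(w)=0$ and $w$ concentrated on $\{|x|\ge R\}$ near the dilated profile from \eqref{g4}, or more simply using the super-quadratic growth of $H$ from \eqref{g5}, $I(tw)\to-\infty$ as $t\to+\infty$, so a mountain pass value $c_{MP}>0$ is well defined.

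Next I would establish the Palais–Smale condition at level $c_{MP}$, and this is where \eqref{g5} does the real work — the same point flagged in Remark \ref{rem:5.7}. Given a $(PS)_{c_{MP}}$ sequence $(u_n)$ with $u_n=\phi_{\lambda,n}+q(u_n)\gl$, the Ambrosetti–Rabinowitz inequality gives the classical estimate $c_{MP}+o(1)+o(1)\|u_n\|_{\Hal}\ge I(u_n)-\tfrac1\beta I'(u_n)[u_n] \ge (\tfrac12-\tfrac1\beta)\|u_n\|_{\Hal}^2 - C\|u_n\|_2^2$, and after again absorbing the $L^2$-term into the norm via a suitable $\lambda$ (here one must check that $(\tfrac12-\tfrac1\beta)$ times the coercivity constant beats the $\|u_n\|_2^2$ coefficient, which is exactly the obstruction forcing $N=2$ or $\alpha\le0$ to be excluded from the weaker hypothesis in Theorem \ref{main}; in those cases $-\Delta_\a$ has the negative eigenvalue $-\omega_\a$ and one works with $\lambda>\omega_\a$ close to $\omega_\a$), one deduces $(u_n)$ is bounded in $\Hal$. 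Boundedness then gives a weak limit $u_0$; since $q$ is a bounded linear functional, $q(u_n)\to q(u_0)$, and $\phi_{\lambda,n}\weakto\phi_{\lambda,0}:=u_0-q(u_0)\gl$ in $H^1$. To upgrade to strong convergence I would first restore compactness by working in the radial subspace or by the concentration-compactness / Brezis–Lieb argument: the nonlinear term $g$ is subcritical by \eqref{g3} ($p<3=2^*$ when $N=3$, $p$ arbitrary when $N=2$ with $2^*=\infty$), so splitting $\irn g(u_n)\bar u_n$ via Brezis–Lieb and using $I'(u_n)\to0$ yields $\|u_n\|_{\Hal}\to\|u_0\|_{\Hal}$ up to a possible loss of mass at infinity, which is ruled out in the usual way (a nonzero profile escaping to infinity would solve the limit problem $-\Delta v = g(v)$ and contribute an energy $\ge$ its mountain pass level, contradicting that $c_{MP}$ is the infimum along paths) — alternatively, on $\Hard$ the embedding into $L^p$ is compact for the relevant $p$, giving strong convergence directly. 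Either way, $u_n\to u_0$ strongly, hence $I(u_0)=c_{MP}>0$ so $u_0\ne0$ and $I'(u_0)=0$, i.e. $u_0$ is a nontrivial weak solution of \eqref{eq}.

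Finally, I would show $q(u_0)\ge0$ up to phase and in fact $q(u_0)>0$. The sign statement is immediate from Theorem \ref{main2}, which applies to any nontrivial weak solution of \eqref{eq}; so replacing $u_0$ by $e^{-i\theta}u_0$ (using the gauge invariance of $g$ and $G$, which leaves $I$ and the equation invariant) we may assume $q(u_0)\ge0$. It remains to exclude $q(u_0)=0$: if $q(u_0)=0$ then $u_0=\phi_{\lambda,0}\in H^1(\RN)$ solves the plain scalar field equation $-\Delta\phi_{\lambda,0}+\lambda\phi_{\lambda,0}=g(\phi_{\lambda,0})+\lambda\phi_{\lambda,0}$, i.e. $-\Delta u_0=g(u_0)$ in $\RN$; for $N=2$ and $N=3$ one then invokes the Pohozaev identity \eqref{eq:5.6} with $q(u)=0$, which for $N=2$ reads $0=-2\irn G(u_0)$ forcing $\irn G(u_0)=0$, and combined with $I(u_0)=\tfrac12\|\nabla u_0\|_2^2>0$ gives a contradiction (for $N=3$ the combination of \eqref{eq:5.6} and $I'(u_0)[u_0]=0$ likewise forces $u_0\equiv0$ via the standard Derrick–Pohozaev scaling obstruction in low dimension, contradicting $u_0\ne0$). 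I expect the genuine obstacle to be the Palais–Smale step, specifically checking that the coercivity margin left after absorbing the $L^2$-norm is positive in the cases $N=3,\alpha\le0$ and $N=2$ where $-\Delta_\a$ carries a negative eigenvalue; this is precisely why \eqref{g5} rather than merely \eqref{g4} is needed, and it is the technical heart of the argument referenced in Remark \ref{rem:5.7}.
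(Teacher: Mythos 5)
Most of your plan parallels the paper's: the mountain pass geometry follows from \eqref{g2}--\eqref{g4} (and \eqref{g5} implies \eqref{g4} since $H$ is superquadratic while $G=H-\omega s^2/2$), and the Ambrosetti--Rabinowitz boundedness estimate is exactly the mechanism the paper uses (Lemma \ref{lem:5.7}: with $\lambda\in(\omega_\alpha,\omega)$ one writes $G(u)=H(u)-\tfrac{\omega}{2}|u|^2$ so that $\beta I(u_n)-I'(u_n)[u_n]\ge\tfrac{\beta-2}{2}\bigl(\|\nabla\phi_n\|_2^2+\lambda\|\phi_n\|_2^2+(\omega-\lambda)\|u_n\|_2^2+(\alpha+\xi_\lambda)|q(u_n)|^2\bigr)$, so no ``absorbing'' against a negative coefficient is ever needed --- all coefficients are already nonnegative). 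Your compactness step (radial subspace + Strauss/Brezis--Lieb) is also what the paper does.

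The genuine gap is in your last paragraph. The Pohozaev identity does \emph{not} rule out $q(u_0)=0$. If $q(u_0)=0$ then $u_0=\phi_\lambda\in H^1(\R^N)$ solves $-\Delta u_0=g(u_0)$, and this equation \emph{does} have nontrivial $H^1$-solutions for $N=2$ and $N=3$ (Berestycki--Lions); they satisfy the Pohozaev identity without any contradiction. For $N=2$, $\int_{\R^2}G(u_0)\,dx=0$ together with $I(u_0)=\tfrac12\|\nabla u_0\|_2^2>0$ is perfectly consistent --- this is exactly the energy balance of a 2D ground state, not a contradiction. For $N=3$ there is likewise no ``Derrick--Pohozaev obstruction''; the ground state has $\tfrac12\|\nabla u_0\|_2^2=3\int G(u_0)>0$. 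The paper's argument (Proposition \ref{prop:4.9}) is of a different nature and does not rest on Pohozaev at all. It proceeds by comparison of levels: since $\Gamma_{0,\mathrm{real}}\subset\Gamma$ and $I=I_0$ on $H^1$, one gets $\sigma\le m_0$; if $q(u_0)=0$ then $I_0(\phi_\lambda)=\sigma\le m_0$ with $I_0'(\phi_\lambda)=0$, forcing $\phi_\lambda$ to be a \emph{ground state} of $-\Delta u=g(u)$, hence (after the Jeanjean--Tanaka characterization and the maximum principle) real and strictly positive up to phase. But Theorem \ref{main2} gives the boundary condition $\phi_\lambda(0)=(\alpha+\xi_\lambda)q(u_0)=0$, contradicting $\phi_\lambda(0)>0$. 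You have the boundary condition available in your proof (you invoke Theorem \ref{main2} for the sign of $q$), so the missing ingredients are the level inequality $\sigma\le m_0$ and the positivity of complex-valued ground states of \eqref{eq:4.7}; once you replace the Pohozaev argument by this one, the proof closes.
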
 

Here we briefly explain our main ideas of the proof.
We prove Theorem \ref{main} by applying the mountain pass theorem.
In fact under \ef{g1}-\ef{g4}, 
one can see that the functional $I$ has the mountain pass geometry. 
The existence of a non-trivial critical point of $I$ can be
shown by establishing the Palais-Smale condition.
Indeed once we could have the {\it boundedness of Palais-Smale sequences} in hand, 
one can expect the strong convergence of Palais-Smale sequences 
by introducing an auxiliary nonlinear term as in \cite{AP, BL, HIT, PW1, PW2}
and restricting ourselves to the space of radial functions.
However, as is well-known, the most difficult part is 
to prove the boundedness of Palais-Smale sequences.

In order to guarantee the existence of a bounded Palais-Smale sequence,
a standard strategy is to apply so-called {\it monotonicity trick} as in \cite{jj,S}.
However in the process of obtaining the boundedness, 
one needs to use the Pohozaev identity, which could require a lot of effort. 
Another approach, developed in \cite{CT, HIT, jj}, 
consists in considering a functional with an additional one dimensional variable. 
This guarantees the existence of a special Palais-Smale sequence 
which {\em almost} satisfies  the Pohozaev identity. 
In our case, even if we have already  obtained the Pohozaev identity,
this does not immediately lead us to obtain a bounded Palais-Smale sequence.
Indeed if we evaluate $I$ on the Pohozaev \emph{manifold}, 
using the identity \ef{eq:5.6}, 
we find that 
\[
I(u) =\frac 1N \| \nabla \phi_\l \|_2^2 + \frac \l N( \| \phi_\l \|_2^2 - \| u \|_2^2)
+ \frac{4-N}{2N} ( \alpha + \xi_\l) |q(u)|^2 
+ \frac \l N\| \gl \|_2^2 |q(u)|^2,
\]
for any $u= \phi_\l + q(u) \gl\in \Ha$. 
Therefore, if we take a sequence $\{u_n\}$ therein, 
with $u_n= \phi_{\l,n} + q(u_n) \gl$, 
because of the second term of the expression on $I$, 
the boundedness of $\| \nabla \phi_{\l,n }\|_2$ and of $q(u_n)$ cannot be derived 
from the above formula and hence
the application of the monotonicity trick does not work straightforwardly.
Moreover as we will see in Section 2,
the spatial scaling $x \to \frac{x}{t}$ makes a change in the parameter
$\l \to \frac{\l}{t^2}$.
This fact causes a difficulty of deriving the boundedness of 
Palais-Smale-Pohozaev sequences as in \cite{CT, HIT, jj}.
To overcome these difficulties,
we still use the technique of {\em adding one dimensional space} mentioned before 
but an additional blow-up type argument is necessary.
To carry out this procedure,
the restriction $N=3$ and $\alpha>0$ is needed under the assumptions \ef{g1}-\ef{g4}.
See Remark \ref{rem:5.7} for more detail about the necessity of this restriction.
Unfortunately, whenever $N=3$, $\alpha \red{\le} 0$ or $N=2$, 
the previous arguments do not work under the assumptions \ef{g1}-\ef{g4}. 
Therefore, in this case, in place of \eqref{g4}, we have to require \eqref{g5}. 
Observe that, under this growth condition, the situation is more straightforward. 
In particular,  the auxiliary functional $J$ is no more necessary 
and we can directly deal with classical Palais-Smale sequences.

Once we have proved the existence of a nontrivial solution of \ef{eq},
the most important ingredient is to show that
its singular part is not zero,
otherwise the obtained solution may coincide with that of \cite{BL}.
For that purpose, we take into account of 
the variational characterization 
and qualitative properties of ground state solutions of
the scalar field equation
\begin{equation} \label{scalar}
-\Delta u = g(u) \quad \text{in} \ \RN
\end{equation}
in the complex-valued setting.
We will see in Proposition \ref{prop:4.9} that
if the mountain pass solution $u= \phi_\l + q(u) \gl$ of \ef{eq}
satisfies $q(u)=0$, 
then $\phi_\l$ is a ground state solution of \ef{scalar},
contradicting to the boundary condition \ef{bdry}.

\medskip
This paper is organized as follows.
In Section \ref{se:fs}, we prepare several basic tools,
including some properties of the Green function 
and detailed informations of the decomposition of $u \in \Ha$.
In Section \ref{se:prop}, we prove some qualitative properties of nontrivial solutions
and establish the Pohozaev identity for \ef{eq},
then Theorem \ref{main2} will follows easily. 
Section \ref{se:vs} is devoted to the variational formulation of \ef{eq}.
Finally, we obtain the existence of a nontrivial solution of \ef{eq},
proving Theorem \ref{main} and Theorem \ref{thm:1.4},
by applying the mountain pass theorem in Section \ref{se:ex}. 
In the former case, as previously explained, the technique of
adding one dimensional space is necessary.

\section{Functional setting}\label{se:fs}

In this section, we prepare several basic tools,
including some properties of the Green function and
detailed informations of the decomposition of $u \in \Ha$.

First we recall some basic properties of the Green function $\gl$
of $-\Delta \gl + \l \gl = \delta_0$,
which is explicitly written as
\begin{equation}\label{eq:2.1}
\gl(x)= \mathcal{F}^{-1} \left( \frac{1}{|\xi|^2+\lambda} \right)
= \begin{cases}  
\dfrac{e^{-\sqrt{\lambda}|x|}}{4\pi |x|} &\  (N=3),\smallskip \\
\dfrac{K_0(\sqrt{\lambda}|x|)}{2\pi} &\  (N=2),
\end{cases}
\end{equation}
where $\mathcal{F}^{-1} $ is the inverse of the Fourier transform 
and $K_0$ is the modified Bessel function of the second kind of order $0$.

\begin{proposition} \label{prop:2.1}
Suppose $\lambda >0$ and $N=2, 3$. Then the following properties hold.

\begin{enumerate} [label=(\roman{*}),ref=\roman{*}]

\item \label{2.1-1}
$\gl \in L^{p}(\Omega) \cap L^{\infty}(\Omega)$
for any \red{$\Omega\subset \RN$ such that $0\notin\bar \Omega$} and $p \ge 1$. 

\item \label{2.1-2}
$\gl \in L^p(\mathbb{R}^N)$ for 
$\begin{cases}
1 \le p <3 &\  (N=3), \\
1 \le p <\infty &\  (N=2). 
\end{cases}$

\item \label{2.1-3}
$\gl \notin H^{1} (\mathbb{R}^N)$ and 
$x \cdot \nabla \gl \notin H^{1} (\mathbb{R}^N)$.

\item \label{2.1-4}
$\gl (x/t) =t^{N-2} \mathcal{G}_{\lambda/t^2}(x)$, 
for $t>0$ and $x \ne 0$. 

\item \label{2.1-5}
We have that
\[
\l \| \gl \|_2^2 = 
\begin{cases}
\dfrac{\xi_{\l}}{2} &(N=3),
\\[3mm]
\dfrac{1}{4 \pi} &(N=2).
\end{cases}
\]

\item For $\l_1$, $\l_2>0$, 
$\mathcal{G}_{\l_1}-\mathcal{G}_{\l_2}$ belongs to $H^2(\RN)$.
\end{enumerate}

\end{proposition}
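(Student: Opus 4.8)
The first five items are essentially direct consequences of the explicit formulas~\eqref{eq:2.1} for $\gl$, and I would dispatch them by elementary integral estimates; the last item requires a little more thought and is where I would spend the effort.

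For item~(\ref{2.1-1}), away from the origin $\gl$ is smooth (both $\frac{e^{-\sqrt\l|x|}}{4\pi|x|}$ and $K_0(\sqrt\l|x|)$ are $C^\infty$ and exponentially decaying on any set whose closure avoids $0$), so boundedness and $L^p$-integrability for every $p\ge1$ are immediate. For item~(\ref{2.1-2}) the only issue is the local behavior at the origin and the decay at infinity. When $N=3$ one has $|\gl(x)|\lesssim|x|^{-1}$ near $0$, so $\gl\in L^p_{\mathrm{loc}}$ exactly when $p<3$ (by integration in polar coordinates $\int_0 r^{-p}r^2\,dr<\infty\iff p<3$), while the exponential decay handles $|x|$ large; when $N=2$ one uses the classical asymptotics $K_0(r)\sim-\log r$ as $r\to0^+$ and $K_0(r)\sim\sqrt{\pi/2r}\,e^{-r}$ as $r\to\infty$, so $\gl\in L^p(\R^2)$ for every $p<\infty$. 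For item~(\ref{2.1-4}), I would simply substitute into the defining equation: if $w(x):=\gl(x/t)$ then a change of variables in the distributional identity $-\Delta\gl+\l\gl=\delta_0$ shows $w$ solves $-\Delta w+\tfrac{\l}{t^2}w=t^{N}\delta_0$; comparing with the normalization of $\mathcal G_{\l/t^2}$ and using homogeneity of the delta in $\R^N$ gives $w=t^{N-2}\mathcal G_{\l/t^2}$. Alternatively one reads it straight off the explicit formulas. For item~(\ref{2.1-3}), $\gl\notin H^1(\R^N)$ because $\n\gl$ is of size $|x|^{-(N-1)}$ near $0$, so $|\n\gl|^2\sim|x|^{-2(N-1)}$ is not locally integrable for $N=2,3$ (again $\int_0 r^{-2(N-1)}r^{N-1}\,dr$ diverges); the same scaling shows $x\cdot\n\gl\sim|x|^{-(N-2)}\cdot$(bounded) but its gradient is again of order $|x|^{-(N-1)}$, so it fails to be in $H^1$ for the same reason. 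Item~(\ref{2.1-5}) is a computation of $\|\gl\|_2^2$: either directly from~\eqref{eq:2.1}, or more slickly via Plancherel, $\|\gl\|_2^2=\int_{\R^N}(|\xi|^2+\l)^{-2}\,d\xi$, and then one recognizes $\frac{d}{d\l}\int(|\xi|^2+\l)^{-1}\,d\xi=-\int(|\xi|^2+\l)^{-2}\,d\xi$, reducing it to differentiating $\xi_\l$ (whose definition in~\eqref{xi} encodes precisely $\int_{\R^N}(|\xi|^2+\l)^{-1}\,d\xi$ up to the appropriate subtraction in the $N=2$ case); matching constants yields $\l\|\gl\|_2^2=\xi_\l/2$ for $N=3$ and $1/(4\pi)$ for $N=2$.

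For the last item, that $\mathcal G_{\l_1}-\mathcal G_{\l_2}\in H^2(\R^N)$: the point is that the singularities at the origin cancel. I would argue in Fourier variables. By~\eqref{eq:2.1}, $\widehat{\mathcal G_{\l_1}-\mathcal G_{\l_2}}(\xi)=\frac{1}{|\xi|^2+\l_1}-\frac{1}{|\xi|^2+\l_2}=\frac{\l_2-\l_1}{(|\xi|^2+\l_1)(|\xi|^2+\l_2)}$, and $\mathcal G_{\l_1}-\mathcal G_{\l_2}\in H^2(\R^N)$ is equivalent to $(1+|\xi|^2)^2\,|\widehat{\mathcal G_{\l_1}-\mathcal G_{\l_2}}(\xi)|^2\in L^1(\R^N)$. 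But the difference on the Fourier side decays like $|\xi|^{-4}$ as $|\xi|\to\infty$, so $(1+|\xi|^2)^2|\widehat{\cdots}|^2$ decays like $|\xi|^{-4}$, which is integrable at infinity in dimensions $N=2,3$ (since $\int^{\infty} r^{-4}r^{N-1}\,dr<\infty$ for $N<4$), and the integrand is bounded near $\xi=0$; hence it lies in $L^1(\R^N)$. This proves $\mathcal G_{\l_1}-\mathcal G_{\l_2}\in H^2(\R^N)$. I expect this Fourier argument to be the cleanest; the only mild obstacle is being careful that the two Green functions are defined with the same normalization so that the leading singular terms genuinely cancel, but this is guaranteed by~\eqref{eq:2.1}.
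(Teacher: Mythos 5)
The paper states Proposition~\ref{prop:2.1} without proof, so there is no argument in the text to compare against; what follows assesses the proposal on its own terms. Items~(\ref{2.1-1}), (\ref{2.1-2}), (\ref{2.1-4}), (\ref{2.1-5}) and~(vi) are fine: the explicit formulas~\eqref{eq:2.1}, polar-coordinate estimates, and the Fourier argument for~(vi) all work, and the derivative-in-$\lambda$ heuristic for~(v) can be turned into the clean identity $\|\gl\|_2^2=\tfrac{d\xi_\lambda}{d\lambda}$, from which the stated constants follow. (Minor slip: in the distributional derivation of~(\ref{2.1-4}), the scaled function solves $-\Delta w+\tfrac{\lambda}{t^2}w=t^{N-2}\delta_0$, not $t^N\delta_0$; the claimed conclusion is still right, and in any case the explicit formulas settle it immediately.)

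The real problem is in item~(\ref{2.1-3}) for $N=2$. You argue that $x\cdot\nabla\gl$ is $O(|x|^{-(N-2)})$ near the origin while $\nabla(x\cdot\nabla\gl)$ is of order $|x|^{-(N-1)}$, and conclude non-membership in $H^1$ ``for the same reason.'' For $N=3$ this power counting is correct. For $N=2$, however, the naive counting fails because of cancellation: since $\gl(x)=-\tfrac{\log|x|}{2\pi}+\gr(x)$ with $\gr$ regular, one has $x\cdot\nabla\gl=-\tfrac{1}{2\pi}+O(|x|^2|\log|x||)$, and its gradient is therefore $O(|x|\,|\log|x||)=O(1)$ near $0$, not $O(|x|^{-1})$. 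This is exactly what the paper's own Lemma~\ref{lem:2.2}-(\ref{2.2-2}) records: $\nabla(x\cdot\nabla\gl)=O(1)$ as $|x|\to0$ when $N=2$. So the step ``its gradient is again of order $|x|^{-(N-1)}$'' is wrong in two dimensions, and the argument collapses there. In fact, a direct Fourier check gives $\widehat{x\cdot\nabla\gl}(\xi)=-N\widehat\gl-\xi\cdot\nabla_\xi\widehat\gl=-\tfrac{N}{|\xi|^2+\lambda}+\tfrac{2|\xi|^2}{(|\xi|^2+\lambda)^2}$, which for $N=2$ simplifies to $-\tfrac{2\lambda}{(|\xi|^2+\lambda)^2}$; this decays like $|\xi|^{-4}$ and is bounded, so $(1+|\xi|^2)|\widehat{x\cdot\nabla\gl}|^2\in L^1(\R^2)$, i.e.\ $x\cdot\nabla\gl$ \emph{does} belong to $H^1(\R^2)$. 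Thus the argument you gave cannot be repaired for the stated claim in dimension two (which, in this form, appears to be asserted too broadly by the paper; note that the one place item~(\ref{2.1-3}) is invoked, in the proof of Lemma~\ref{lem:2.3}, only uses $\gl\notin H^1(\RN)$). The remedy is to restrict the assertion about $x\cdot\nabla\gl$ to $N=3$, where your computation (and also the Fourier check, since there $\widehat{x\cdot\nabla\gl}\sim-|\xi|^{-2}$) is sound.
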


Next we decompose $\gl$ as 
\[
\gl(x) = \gr(x) + \gs(x),
\]
where $\gs$ is the fundamental solution of $-\Delta$, that is,
\begin{equation} \label{eq:2.2}
\gs(x)= \mathcal{F}^{-1} \left( \frac{1}{|\xi|^2} \right)
= \begin{cases}
\dfrac{1}{4\pi |x|} &\ (N=3), \smallskip \\
-\dfrac{\log |x|}{2\pi} &\ (N=2).
\end{cases}
\end{equation}
From \ef{eq:2.1}-\ef{eq:2.2}, it is clear that $\gr \in C(\R^3)$ and
\[
\gr(0)= - \xi_{\l} =
\begin{cases}
-\dfrac{\sqrt{\l}}{4\pi} &\ (N=3), \smallskip \\
-\dfrac{\log\left(\frac{\sqrt{\l}}{2}\right)+\g}{2\pi} &\ (N=2).
\end{cases}
\]
We also note that $\gs$ is independent of $\lambda$. 
By the definition of $\gs$, we immediately have the following.

\begin{lemma} \label{lem:2.2} \

\begin{enumerate} [label=(\roman{*}),ref=\roman{*}]

\item \label{2.2-1}
When $N=3$, $\gs(x)$ satisfies
\[
\begin{aligned}
x \cdot \nabla \gs(x) &=-\frac{1}{4 \pi|x|}= -\gs(x) \quad (x \ne 0), \\
\nabla \big( x \cdot \nabla \gs(x) \big)
&= \frac{x}{4 \pi |x|^3} \quad (x \ne 0).
\end{aligned}
\]

\item \label{2.2-2}
When $N=2$, $\gl(x)$ satisfies
\[
\begin{aligned}
x \cdot \nabla \gl(x) &= O(1) \quad (|x| \sim 0), \\
\nabla \big(x \cdot \nabla \gl(x) \big) &= O(1) \quad ( |x| \sim 0).
\end{aligned}
\]
\end{enumerate}
\end{lemma}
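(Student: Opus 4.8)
The plan is to verify both items by direct differentiation; there is no genuine obstacle here, and the only point requiring care is applying the chain rule for radial functions and quoting the correct small-argument asymptotics and recurrence relations for the modified Bessel functions.

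For \eqref{2.2-1} I would argue as follows. Since $\gs(x)=\frac{1}{4\pi}|x|^{-1}$ is smooth on $\R^3\setminus\{0\}$, from $\partial_i|x|^{-1}=-x_i|x|^{-3}$ one gets $\nabla\gs(x)=-\frac{x}{4\pi|x|^{3}}$, whence
\[
x\cdot\nabla\gs(x)=-\frac{|x|^{2}}{4\pi|x|^{3}}=-\frac{1}{4\pi|x|}=-\gs(x),\qquad x\neq0 .
\]
Differentiating the intermediate expression $-\frac{1}{4\pi}|x|^{-1}$ once more in the same way gives $\nabla\big(x\cdot\nabla\gs(x)\big)=\frac{x}{4\pi|x|^{3}}$, which is the second formula.

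For \eqref{2.2-2} I would work from the explicit form $\gl(x)=\frac{1}{2\pi}K_0(\sqrt\lambda\,|x|)$ recorded in \eqref{eq:2.1}. Using $K_0'=-K_1$ and the chain rule, $\nabla\gl(x)=-\frac{\sqrt\lambda}{2\pi}K_1(\sqrt\lambda\,|x|)\frac{x}{|x|}$, so $x\cdot\nabla\gl(x)=-\frac{\sqrt\lambda\,|x|}{2\pi}K_1(\sqrt\lambda\,|x|)$; since $sK_1(s)\to1$ as $s\to0^+$ and $sK_1(s)$ is continuous on $(0,\infty)$, this is $O(1)$ near the origin (it actually tends to $-\frac{1}{2\pi}$). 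Next, writing $x\cdot\nabla\gl(x)=-\frac{1}{2\pi}G(\sqrt\lambda\,|x|)$ with $G(s):=sK_1(s)$ and invoking the Bessel recurrence $G'(s)=\frac{d}{ds}\big(sK_1(s)\big)=-sK_0(s)$, a second application of the chain rule yields
\[
\nabla\big(x\cdot\nabla\gl(x)\big)=\frac{\lambda K_0(\sqrt\lambda\,|x|)}{2\pi}\,x=\lambda\,\gl(x)\,x .
\]
Finally, from $K_0(s)=-\log s+O(1)$ as $s\to0^+$ we obtain $\big|\lambda\,\gl(x)\,x\big|=\frac{\lambda}{2\pi}K_0(\sqrt\lambda\,|x|)\,|x|=O\big(|x|\,|\log|x||\big)$, which is indeed $O(1)$ (and in fact vanishes) as $|x|\to0$. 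The step one must be careful with is the recurrence $\frac{d}{ds}(sK_1(s))=-sK_0(s)$ together with the asymptotics $K_1(s)\sim s^{-1}$ and $K_0(s)\sim-\log s$ near $s=0$; once these are in place the computation is entirely routine.
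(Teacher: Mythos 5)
Your proof is correct, and it is exactly the direct computation that the paper deems "immediate" (the lemma is stated with no proof, following "By the definition of $\gs$, we immediately have the following"). Part (i) is the elementary chain-rule calculation for $|x|^{-1}$; for part (ii), the chain rule for a radial function together with $K_0'=-K_1$, the recurrence $\frac{d}{ds}\big(sK_1(s)\big)=-sK_0(s)$, and the small-argument asymptotics $K_1(s)\sim s^{-1}$, $K_0(s)\sim -\log s$ give both $O(1)$ bounds as you stated. One small remark: the identity $\nabla\big(x\cdot\nabla\gl\big)=\lambda\,\gl(x)\,x$ that you obtained via the Bessel recurrence can also be seen directly from the fact that $\gl$ solves $\Delta \gl=\lambda\gl$ away from the origin (for a radial profile $F(r)$ one has $\nabla(x\cdot\nabla\gl)=(F'+rF'')\tfrac{x}{|x|}$ and $rF''+F'=\lambda rF$), which avoids quoting the recurrence explicitly; but your route is equally valid and careful to flag the asymptotics it uses.
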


Next we investigate the decomposition of $u \in H^1_{\alpha}(\R^N)$ in detail.

\begin{lemma} \label{lem:2.3}
Let $u \in \Ha$ be given. 
Then $q(u)$ does not depend on the choice of $\l >0$ 
and so it is determined uniquely.
\end{lemma}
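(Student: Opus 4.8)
Suppose $u\in\Ha$ admits two decompositions
$u=\phi_{\l_1}+q_1\cg_{\l_1}=\phi_{\l_2}+q_2\cg_{\l_2}$
with $\phi_{\l_i}\in H^1(\RN)$, $q_i\in\C$, $\l_1,\l_2>0$.
The goal is to show $q_1=q_2$. Subtracting the two expressions gives the identity
\[
\phi_{\l_1}-\phi_{\l_2} = q_2\,\cg_{\l_2} - q_1\,\cg_{\l_1}
\qquad\text{in } L^2(\RN).
\]
The left-hand side lies in $H^1(\RN)$. The plan is to analyse the right-hand side, writing
$q_2\cg_{\l_2}-q_1\cg_{\l_1} = (q_2-q_1)\cg_{\l_1} + q_2(\cg_{\l_2}-\cg_{\l_1})$.
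By Proposition \ref{prop:2.1}(vi), $\cg_{\l_2}-\cg_{\l_1}\in H^2(\RN)\subset H^1(\RN)$, so that term contributes nothing to the obstruction. Hence we obtain
\[
(q_2-q_1)\,\cg_{\l_1} \;=\; (\phi_{\l_1}-\phi_{\l_2}) - q_2(\cg_{\l_2}-\cg_{\l_1}) \;\in\; H^1(\RN).
\]
But by Proposition \ref{prop:2.1}(iii), $\cg_{\l_1}\notin H^1(\RN)$, so the only way a scalar multiple of $\cg_{\l_1}$ can lie in $H^1(\RN)$ is for that scalar to vanish. Therefore $q_2-q_1=0$, i.e.\ $q(u)$ is independent of $\l$.

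The main point requiring care is the manipulation with distributional identities: the equation $\phi_{\l_1}-\phi_{\l_2}=q_2\cg_{\l_2}-q_1\cg_{\l_1}$ is an identity of $L^2$-functions (indeed of tempered distributions), and one must be sure the rearrangement is legitimate before invoking the $H^1$ membership of each piece. This is routine since all functions involved are genuine $L^2$-functions and $H^1$, $H^2$ are linear subspaces; no delicate cancellation at the origin is needed, the singularity of $\cg_{\l_1}$ at $x=0$ doing all the work. One should also recall why $\cg_{\l_1}\notin H^1$: near the origin $\cg_{\l_1}(x)$ behaves like $\gs(x)$, which is $\sim |x|^{-1}$ for $N=3$ and $\sim -\log|x|$ for $N=2$, and in either case $\nabla\cg_{\l_1}\notin L^2$ near $0$; this is exactly the content of Proposition \ref{prop:2.1}(iii). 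Since the claim that a nonzero multiple of $\cg_{\l_1}$ cannot be repaired into an $H^1$ function by adding an $H^1$ function is immediate from $H^1$ being a vector space, the argument is complete, and uniqueness of the decomposition $u=\phi_\l+q(u)\cg_\l$ for each fixed $\l$ follows by the same reasoning (if $\phi_\l+q\cg_\l=\tilde\phi_\l+\tilde q\cg_\l$ then $(q-\tilde q)\cg_\l=\tilde\phi_\l-\phi_\l\in H^1$, forcing $q=\tilde q$ and then $\phi_\l=\tilde\phi_\l$).
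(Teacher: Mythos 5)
Your proof is correct, and it takes a slightly different (arguably cleaner) route than the paper. The paper passes to the Fourier side via the Plancherel theorem and argues directly that
\[
|\xi|\left|\frac{q_{\l_1}}{|\xi|^2+\l_1}-\frac{q_{\l_2}}{|\xi|^2+\l_2}\right|\notin L^2(\RN)
\]
unless $q_{\l_1}=q_{\l_2}$, which is essentially an explicit version of Proposition \ref{prop:2.1}-(\ref{2.1-3}) applied to a linear combination of two Green's functions. You instead rearrange the combination as $(q_2-q_1)\cg_{\l_1}+q_2(\cg_{\l_2}-\cg_{\l_1})$ and invoke Proposition \ref{prop:2.1}(vi) to absorb the second piece into $H^1$, reducing the obstruction to the single statement $\cg_{\l_1}\notin H^1(\RN)$. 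What the paper's approach buys is that it is self-contained and does not rely on the (unproved in the text) item (vi); what your approach buys is that it stays entirely at the level of vector-space bookkeeping, with no Fourier computation. Both hinge on the same singularity of the Green's function at the origin, and both are fine. Your closing remark on uniqueness of the decomposition for fixed $\l$ is a correct (and useful) bonus observation.
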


\begin{proof}
Let $\l_1$, $\l_2 >0$ with $\l_1\neq\l_2$ be given and consider the decomposition:
\[
u=\phi_{\lambda_1}+q_{\l_1}(u) \mathcal{G}_{\l_1} \quad \text{and} \quad
u=\phi_{\lambda_2}+q_{\l_2}(u) \mathcal{G}_{\l_2}.
\]
Then one has
\[
\phi_{\l_1}-\phi_{\l_2}= q_{\l_2}(u)\mathcal{G}_{\l_2}-q_{\l_1}(u)\mathcal{G}_{\l_1}.
\]

Assume by contradiction that $q_{\l_1}(u)\neq q_{\l_2}(u)$. 
By the Plancherel theorem, it follows that
\[
\left|\n \big(q_{\l_2}(u)\mathcal{G}_{\l_2}-q_{\l_1}(u)\mathcal{G}_{\l_1}\big)\right|
\in L^2(\R^N) \Longleftrightarrow
|\xi|\left|\frac{q_{\l_1}(u)}{|\xi|^2+\l_1}-\frac{q_{\l_2}(u)}{|\xi|^2+\l_2}\right|\in L^2(\R^N),
\]
but the last one does not hold if $q_{\l_1}(u)\neq q_{\l_2}(u)$
by Proposition \ref{prop:2.1}-\eqref{2.1-3}. 
This implies that 
$q_{\l_2}(u)\mathcal{G}_{\l_2}-q_{\l_1}(u)\mathcal{G}_{\l_1}\notin \H$. 
Therefore $\phi_{\l_1}-\phi_{\l_2}$ does not belong to $\H$,
which is inconsistent, concluding the proof.
\end{proof}

\begin{remark} \label{rem.2.4}
If $u \in D(-\Delta_\a)$, 
we can give a precise expression of $q(u)$ as follows:
\begin{equation} \label{eq:2.3}
q(u) = \lim_{ |x| \to 0} \frac{u(x)}{\gs(x)}.
\end{equation}
\end{remark}

\begin{lemma} \label{lem:2.5}
Let $u$, $v \in \Ha$ be given and $t>0$, then the following holds:
\begin{enumerate} [label=(\roman{*}),ref=\roman{*}]

\item\label{2.5-1} $q(u+tv)= q(u) +t q(v)$,

\item\label{2.5-2} $q \big( u (\cdot/{t} ) \big)
= t^{N-2} q(u)$.

\end{enumerate}
\end{lemma}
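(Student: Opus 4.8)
The plan is to prove both assertions of Lemma~\ref{lem:2.5} directly from the decomposition $u = \phi_\l + q(u)\cg_\l$ together with the uniqueness of the charge furnished by Lemma~\ref{lem:2.3}, so that it suffices to exhibit \emph{a} decomposition of the transformed function of the required form and read off the charge.

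For part~\ref{2.5-1}: given $u = \phi_\l + q(u)\cg_\l$ and $v = \psi_\l + q(v)\cg_\l$ with $\phi_\l, \psi_\l \in \H$, I would simply write
\[
u + tv = \big(\phi_\l + t\psi_\l\big) + \big(q(u) + t\,q(v)\big)\cg_\l,
\]
observe that $\phi_\l + t\psi_\l \in \H$, so this is a legitimate decomposition of $u+tv$ in the sense of the definition of $\Ha$, and conclude by Lemma~\ref{lem:2.3} that the charge of $u+tv$ is exactly $q(u) + t\,q(v)$.

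For part~\ref{2.5-2}: starting from $u = \phi_\l + q(u)\cg_\l$, I would substitute $x \mapsto x/t$ and use Proposition~\ref{prop:2.1}-\ref{2.1-4}, namely $\cg_\l(x/t) = t^{N-2}\cg_{\l/t^2}(x)$, to get
\[
u(x/t) = \phi_\l(x/t) + t^{N-2} q(u)\, \cg_{\l/t^2}(x).
\]
Then I would note that $\phi_\l(\cdot/t) \in \H$ (a dilation preserves $H^1$), so the displayed identity is exactly the canonical decomposition of $u(\cdot/t)$ associated with the parameter $\l/t^2 > 0$; hence by Lemma~\ref{lem:2.3} (charge independent of the choice of parameter) the charge of $u(\cdot/t)$ equals $t^{N-2} q(u)$.

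I do not anticipate a genuine obstacle here: both statements are essentially bookkeeping once the scaling rule for $\cg_\l$ and the parameter-independence of the charge are in place. The only point requiring a word of care is that in part~\ref{2.5-2} the natural decomposition of the dilated function lives at a \emph{different} value of the parameter ($\l/t^2$ instead of $\l$), which is precisely why Lemma~\ref{lem:2.3} is invoked rather than matching parameters directly; and one should check that $u(\cdot/t)$ still lies in $L^2(\RN)$ and $\Ha$, which is immediate since dilation is a bounded operation on $L^2$ and on $H^1$.
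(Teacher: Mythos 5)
Your proof is correct and follows essentially the same route as the paper: in both parts you exhibit an explicit decomposition of the transformed function and invoke the $\l$-independence and uniqueness of the charge from Lemma~\ref{lem:2.3}. The only cosmetic difference is in part~\ref{2.5-2}, where the paper relabels $\l = t^2\mu$ to write the decomposition at parameter $\mu$, whereas you read off the charge directly at parameter $\l/t^2$; these are the same argument.
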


\begin{proof}
\eqref{2.5-1} follows by the uniqueness result of  Lemma \ref{lem:2.3}.

For \eqref{2.5-2}, if $u=\phi_{\l}+q(u)\gl$,  
we have by Proposition \ref{prop:2.1}-\ef{2.1-4} that
\begin{align} \label{eq:2.4}
u(x/t) &=\phi_{\l}(x/t)+q(u)\gl(x/t)
= \phi_{\l}(x/t)+q(u) t^{N-2} \mathcal{G}_{\lambda/t^2}(x) \notag \\
&\stackrel{\l = t^2\mu}{=}
 \phi_{t^2 \mu} (x/t) + q(u)t^{N-2} \mathcal{G}_{\mu}(x)
\end{align}
and we conclude, once again, by the uniqueness of $q(u)$.
\end{proof}

\begin{remark} \label{rem:2.6}
For $u= \phi_\l + q(u) \gl \in D(-\Delta_{\alpha})$, let us denote
by $\eta_{t,\l}$ the regular part of $u(\frac{\cdot}{t})$, namely
\[
u\left(x/t\right) =\eta_{t, \lambda}(x)
+q\big( u ( \cdot/t ) \big) \gl(x).
\]
We emphasize that \ef{eq:2.4} shows that
\begin{equation*} 
\eta_{t, \l}(x)
= \phi_{t^2 \l} \left(x/t\right)
\neq \phi_\lambda\left(x/t\right).
\end{equation*}
In particular, under the transformation $x \to x/t$,
we have $\l \to \l/t^2$ and $q \to t^{N-2}q$.
From \ef{eq:2.4}, we also find that
\begin{align*}
I \big( u \left( \cdot/t \right) \big) 
&= \frac{1}{2}\big\langle-\Delta_\alpha u \left( \cdot/t \right), 
u \left( \cdot/t \right) \big\rangle 
-\int G \big(u \left( \cdot/t \right) \big) d x \notag \\
&= \frac{1}{2} \|\nabla \phi \left( \cdot/t \right) \|_{2}^2
+\frac{\lambda}{2t^2} \big( \|\phi \left( \cdot/t \right) \|_{2}^2
-\|u \left( \cdot/t \right) \|_{2}^2\big) \notag + \frac{1}{2} 
\left(\alpha+\xi_{\lambda / t^2}\right)\left| t^{N-2}q(u)\right|^2 \\
&\qquad\qquad -\int_{\mathbb{R}^N} G \big( u \left( \cdot/t \right) \big) d x \notag \\
&= \frac{t^{N-2}}{2} \|\nabla \phi\|_{2}^2
+ \frac{t^{N-2} \lambda}{2} \big(\|\phi\|_{2}^2-\|u\|_{2}^2\big) \notag 
+ \frac{t^{2(N-2)}}{2} \left(\alpha+\xi_{\lambda / t^2}\right) |q(u)|^2 
-t^N \!\int_{\mathbb{R}^N} \!G(u) d x.
\end{align*}
Moreover by the definition of $\xi_\l$ in \ef{xi} and 
Proposition \ref{prop:2.1}-\ef{2.1-5}, 
it follows that $$\frac{d}{dt} \xi_{\lambda / t^2} \big|_{t=1} = -2 \l \| \gl \|_2^2.$$
Thus by differentiating $I \big( u ( \cdot/t ) \big)$ at $t=1$,
we obtain the right hand side of \ef{eq:5.6}.
In other words, we are able to derive the Pohozaev identity \ef{eq:5.6}
from 
\[
\frac{d}{dt} I \big( u ( \cdot/t ) \big) \big|_{t=1}=0
\]
formally. 
\end{remark}

\section{Properties of nontrivial weak solutions}\label{se:prop}

In this section, we establish several properties of 
nontrivial weak solutions of \ef{eq}. 
In particular, we prove that any solution of \eqref{eq} 
satisfies a Pohozaev type identity, which is independently interesting.

First by \eqref{g2} and \eqref{g3}, we deduce that, for suitable $c_1,c_2>0$,
\begin{align}
|g(s)| &\le c_1 s+ c_2 s^{p-1}, \quad \text{for} \ s \ge 0,\label{gstima}
\\
|G(s)| &\le \frac{c_1}2s^2+ \frac{c_2}p s^{p}, \quad \text{for} \ s \ge 0.\label{Gstima}
\end{align}
Thus from \eqref{gstima}, \eqref{Gstima} and 
by definition of the extension to the complex plane of $g$ and $G$, 
we find that
\begin{align}
|g(u)| = |g(|u|)| &\le c_1 |u| + c_2|u|^{p-1}, 
\quad \text{for} \ u \in \C, \label{gCstima} \\
|G(u)| =| G(|u|)| &\le  \frac{c_1}{2} |u|^2 + \frac{c_2}{p} |u|^p, 
\quad \text{for} \ u \in \C. \label{GCstima}
\end{align}

Now we begin with the following regularity result.

\begin{proposition} \label{prop:5.1}

Let $u \in \Ha$ be a nontrivial weak solution of \ef{eq}
and decompose $u= \phi_\l +q(u) \gl$,  for $\l >0$. 
Then the following properties hold:

\begin{enumerate} [label=(\roman{*}),ref=\roman{*}]
\item\label{5.1-1} $\phi_\l \in C^{1, \kappa}(\RN \setminus \{ 0 \})$ 
for some $\kappa \in (0,1)$.

\item\label{5.1-2}
$\phi_\l \in H^2(\RD) \cap C^{1, \kappa}_{\rm loc}(\R^2)$ 
for some $\kappa \in (0,1)$ if $N=2$;

\item \label{5.1-3}
$\phi_\l \in C^{0,\kappa}_{\rm loc}(\R^3)$ for some $\kappa \in (0,1)$ if $N=3$;

\item \label{5.1-4}
$\phi_\l \in H^2(\R^3)$ if $N=3$ and $2< p< \frac{5}{2}$.
\end{enumerate}

\end{proposition}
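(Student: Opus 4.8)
The plan is to first turn \eqref{eq} into an honest distributional equation on all of $\RN$. Testing the weak formulation of \eqref{eq} against $v\in C_c^\infty(\RN)$ (which lies in $\Ha$ with $q(v)=0$), and against $iv$ to drop the real part, one gets that $\phi_\l$ solves
\[
-\Delta\phi_\l \;=\; g(u)+\l\,q(u)\,\gl \;=:\; f_\l \qquad\text{in }\mathcal D'(\RN), \qquad u=\phi_\l+q(u)\gl .
\]
No term concentrated at the origin appears, so the whole proposition reduces to controlling the $L^q_{\rm loc}$-integrability of $f_\l$ and feeding this into interior Calder\'on--Zygmund estimates for $-\Delta$ together with Sobolev embeddings. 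Because $g$ is only continuous by \eqref{g1}, the best one can hope for is $W^{2,q}_{\rm loc}$, hence $C^{1,\kappa}$- or $C^{0,\kappa}$-type, regularity, not classical $C^2$.

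For \eqref{5.1-1} I work on $\{|x|>\delta\}$, where $\gl$ is smooth and bounded and $\phi_\l\in\H$: thus $u\in L^6(\{|x|>\delta\})$ if $N=3$, and $u\in L^q(\{|x|>\delta\})$ for all finite $q$ if $N=2$, so by \eqref{gCstima} (and $p<3$, resp. $p$ arbitrary) one has $f_\l\in L^q_{\rm loc}(\RN\setminus\{0\})$ for some $q>N$; one application of the interior $W^{2,q}$-estimate and $W^{2,q}\hookrightarrow C^{1,\kappa}$ gives $\phi_\l\in C^{1,\kappa}(\RN\setminus\{0\})$. Near the origin the only obstruction is the singularity $\gl(x)\sim|x|^{2-N}$. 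If $N=2$, $\gl\sim -\frac{1}{2\pi}\log|x|$ lies in every $L^q_{\rm loc}(\RD)$ with $q<\infty$ (Proposition \ref{prop:2.1}), so $u$ and hence $f_\l$ lie in $L^q_{\rm loc}(\RD)$ for all $q<\infty$, and $\phi_\l\in W^{2,q}_{\rm loc}(\RD)\hookrightarrow C^{1,\kappa}_{\rm loc}(\RD)$. If $N=3$, $\gl\in L^q_{\rm loc}(\RT)$ exactly for $q<3$, so $|u|^{p-1}\in L^q_{\rm loc}$ for $q<3/(p-1)$; since $p<3$ gives $3/(p-1)>3/2$, choosing $q\in(3/2,\,3/(p-1))$ makes $f_\l\in L^q_{\rm loc}(\RT)$ with $q>N/2$, whence $\phi_\l\in W^{2,q}_{\rm loc}(\RT)\hookrightarrow C^{0,\kappa}_{\rm loc}(\RT)$, which is \eqref{5.1-3}. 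Iterating brings nothing new, since $|u|\lesssim 1+|x|^{-1}$ near $0$ is already sharp.

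To get global $H^2$, I use that $\phi_\l\in\H$ already, so $\phi_\l\in H^2(\RN)$ is equivalent to $\Delta\phi_\l\in L^2(\RN)$, i.e. to $f_\l\in L^2(\RN)$. Here $\gl\in L^2(\RN)$ for $N=2,3$ by Proposition \ref{prop:2.1}, so by \eqref{gCstima} it suffices that $u\in L^2(\RN)\cap L^{2(p-1)}(\RN)$; the $L^2$ part is immediate, while $\phi_\l\in L^{2(p-1)}(\RN)$ follows from Sobolev embedding (using $\phi_\l\in L^\infty_{\rm loc}$ near $0$ from the previous step, and interpolating $L^2$ with $L^6$ at infinity when $N=3$), and $\gl\in L^{2(p-1)}(\RN)$ holds iff $2(p-1)<3$, i.e. $p<\frac{5}{2}$, when $N=3$, and for all finite exponents when $N=2$. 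This gives $\phi_\l\in H^2(\RD)$, i.e. \eqref{5.1-2}, and $\phi_\l\in H^2(\RT)$ when $N=3$ and $2<p<\frac{5}{2}$, i.e. \eqref{5.1-4}.

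The main difficulty — and exactly what makes Remark \ref{rem:1.2} sharp — is the exponent bookkeeping near the origin: $u$ inherits the singularity $|x|^{2-N}$ of $\gl$, so $g(u)$ behaves like $|x|^{-(p-1)}$ there, and $p<\frac{5}{2}$ in dimension three is precisely the borderline below which this is square-integrable near $0$. A secondary point to verify carefully is that the distributional identity for $\phi_\l$ genuinely extends across the origin (no concentrated term), which is why $C_c^\infty(\RN)$ test functions, not only $C_c^\infty(\RN\setminus\{0\})$, are admissible.
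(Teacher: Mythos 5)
Your proposal is correct and follows essentially the same route as the paper: rewrite the weak formulation as $-\Delta\phi_\l=f_\l:=\l q(u)\gl+g(u)$ in $\mathcal D'(\RN)$, estimate $f_\l$ in $L^q$ via \eqref{gCstima} and the integrability of $\gl$ from Proposition \ref{prop:2.1}, and conclude by Calder\'on--Zygmund estimates plus Sobolev/Morrey embeddings, with the threshold $p<\tfrac52$ coming exactly from $\gl^{p-1}\in L^2(\R^3)$. Your explicit choice of exponents (e.g.\ $q\in(\tfrac32,\tfrac{3}{p-1})$ near the origin when $N=3$) and the Fourier characterization of $H^2$ are just slightly more streamlined versions of the paper's bootstrap and ``elliptic regularity'' steps.
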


\begin{proof}
We apply the elliptic regularity theory to the equation: 
\begin{equation} \label{eq:5.1}
-\Delta \phi_\l = \l q(u) \gl +g \big(\phi_\l + q(u) \gl\big) =: f_\l.
\end{equation}
By \eqref{gCstima}, we deduce that
\begin{equation} \label{eq:5.2}
|f_\l| \le C \left( |\phi_\l | + |q(u)| \gl 
+|\phi_\l|^{p-1} + |q(u)|^{p-1} \gl^{p-1} \right) \quad \text{a.e. in} \ \R^N. 
\end{equation}

First by Proposition \ref{prop:2.1}-\eqref{2.1-1}, 
it follows that $f_\l \in L^q(\Omega)$ for any 
$\Omega \subsetneq \R^N \setminus \{0\}$ and $q \ge 2$,
from which we have 
$\phi_\l \in W^{2,q}_ {\rm loc}(\Omega) \hookrightarrow 
C^{1,\kappa}_ {\rm loc}(\Omega)$.
Next when $N=2$, we know that $\gl \in L^q(\R^2)$ for all $q \ge 2$
by Proposition \ref{prop:2.1}-\eqref{2.1-2}.
This implies that $f_\l \in L^q(\R^2)$ for any $q \ge 2$
and especially $f_\l \in L^2(\R^2)$.
Then by the elliptic regularity theory
and the bootstrap argument, 
one finds that $\phi_\l \in H^2(\RD) \cap C^{1,\kappa}_{\rm loc}(\R^2)$.

In the case $N=3$, we only have $\gl \in L^q(\R^3)$ for $1 \le q <3$.
Since $2<p<3$, we can take $q_0 \in \left( \frac{3}{2},3 \right)$ 
so that $1<(p-1)q_0 <3$.
Then it holds that $\gl$, $\gl^{p-1} \in L^{q_0}(\R^3)$ 
and hence $f_\l \in L^{q_0}_ {\rm loc}(\R^3)$. 
By the elliptic theory and the bootstrap argument, we then have 
$\phi_\l \in W^{2,q_0}_ {\rm loc}(\R^3) \hookrightarrow 
C^{0,\kappa}_{\rm loc}(\RT)$
because $q_0> \frac{3}{2}$.
Finally if $N=3$ and $2< p< \frac{5}{2}$, one finds that $2(p-1)<3$
and hence $\gl^{p-1} \in L^2(\R^3)$.
This yields that $f_\l \in L^2(\R^3)$ and $\phi_\l \in H^2(\R^3)$.
\end{proof}

\begin{remark} \label{rem:5.2}
In the case $N=3$ and $\frac{5}{2} \le p< 3$,
we cannot expect that $\phi_\l \in H^2(\R^3)$ in general
because $\gl^{p-1} \not\in L^2(\R^3)$.
Nevertheless, the boundary condition $\phi_\l (0)= (\alpha + \xi_\l)q(u)$
always makes sense by the regularity result of Proposition \ref{prop:5.1}-\ef{5.1-3}.
\end{remark}

\begin{lemma} \label{lem:5.3}
Let $u \in \Ha$ be a nontrivial weak solution of \ef{eq}, fix $\l>0$
and decompose $u= \phi_\l +q(u) \gl$.
Then $\phi_\l \not\equiv 0$
and $q(u)$ can be assumed to be a non-negative real number.
\end{lemma}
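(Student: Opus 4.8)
The plan is to prove two separate assertions: first that $\phi_\l \not\equiv 0$, and then that $q(u)$ can be taken to be a non-negative real number after a phase rotation. For the first part, I would argue by contradiction: suppose $\phi_\l \equiv 0$. Then $u = q(u)\gl$, and in particular $u$ is a scalar multiple of the Green's function. Plugging this into the weak formulation of \eqref{eq}, the left-hand side reduces (since $\phi_\l = 0$) to $-\l q(u) \re \langle \gl, v \rangle$ tested against $v \in \Ha$, while the right-hand side is $\re \int_{\RN} g(q(u)\gl) \bar v\, dx$. Taking $v = u = q(u)\gl$ and using that $\gl \in L^p(\RN)$ for the relevant range of $p$ (Proposition \ref{prop:2.1}-\eqref{2.1-2}), together with the growth bound \eqref{gCstima}, all integrals make sense; but then $-\Delta(0) = g(q(u)\gl) + \l q(u)\gl$ pointwise a.e., i.e. $g(q(u)\gl) = -\l q(u)\gl$ a.e. in $\RN$. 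Since $\gl$ takes all values in $(0,\infty)$ on $\RN\setminus\{0\}$, this would force $g(s) = -\l s$ identically for $s$ in a neighborhood of $0$ (and beyond), contradicting \eqref{g2}, which says $\limsup_{s\to 0^+} g(s)/s = -\omega$ with $\omega \ne \l$ possible — more carefully, one uses that $g(s)/s \to -\l$ for a full interval of $s$ contradicts the behaviour of $g$ near infinity forced by \eqref{g3} (where $g(s)/s^{p-1}$ has a finite nonpositive limit, so $g(s)/s \to 0$ or $-\infty$, not a negative constant). Either way a contradiction is reached, so $\phi_\l \not\equiv 0$. The nontriviality of $u$ itself is needed here to know $q(u) \ne 0$ in the contradiction hypothesis.

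For the second assertion, I would exploit gauge invariance of the problem. The nonlinearity satisfies $g(e^{i\theta}s) = e^{i\theta}g(s)$ for real $s$ and $G(u) = G(|u|)$; moreover the Green's function $\gl$ is real-valued and positive. The key observation is that if $u = \phi_\l + q(u)\gl$ is a weak solution, then for any $\theta \in \R$ the function $e^{i\theta}u = e^{i\theta}\phi_\l + e^{i\theta}q(u)\gl$ is again a weak solution (this follows by multiplying the weak formulation through by $e^{i\theta}$ and using gauge invariance of $g$), with regular part $e^{i\theta}\phi_\l$, charge $e^{i\theta}q(u)$, and the same energy $I(e^{i\theta}u) = I(u)$. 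So writing $q(u) = |q(u)|e^{i\theta_0}$, we replace $u$ by $e^{-i\theta_0}u$, whose charge is then $|q(u)| \ge 0$. Combined with the first part, we may assume $q(u)$ is a non-negative real number while keeping $\phi_\l \not\equiv 0$.

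I expect the main obstacle to be the first part — ruling out $\phi_\l \equiv 0$ rigorously. The subtlety is that knowing $g(q(u)\gl(x)) = -\l q(u)\gl(x)$ for a.e. $x$ only tells us $g(s) = -\l s$ for $s$ ranging over the essential range of $q(u)\gl$, which is indeed all of $(0,\infty)$ since $\gl(x) \to +\infty$ as $|x|\to 0$ and $\gl(x)\to 0$ as $|x|\to\infty$, with $\gl$ continuous and positive on $\RN\setminus\{0\}$. So in fact $g(s) = -\l s$ for all $s > 0$, and one must check this is incompatible with the standing hypotheses: assumption \eqref{g3} requires $\lim_{s\to\infty} g(s)/s^{p-1}$ to exist in $(-\infty, 0]$ with $p > 2$, whereas $g(s) = -\l s$ gives $g(s)/s^{p-1} = -\l s^{2-p} \to 0$; that is actually consistent with \eqref{g3}! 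The real contradiction must instead come from \eqref{g4} (or \eqref{g2}): with $g(s) = -\l s$ one has $G(s) = -\frac{\l}{2}s^2 < 0$ for all $s > 0$, violating \eqref{g4} which demands $G(\zeta) > 0$ for some $\zeta > 0$. So the clean contradiction is via \eqref{g4}. One should double check whether Lemma \ref{lem:5.3} is invoked only under \eqref{g1}--\eqref{g4} (it appears in Section \ref{se:prop} which assumes \eqref{g1}--\eqref{g3}); if \eqref{g4} is not available at this point, one instead derives the contradiction from \eqref{g2}: $g(s)/s \equiv -\l$ forces $\limsup_{s\to 0^+} g(s)/s = -\l$, hence $\l = \omega$, but then also $g(s) = -\omega s$ for all $s$, which with \eqref{g3} and the requirement $p > 2$ is fine — so genuinely \eqref{g4} seems to be the assumption that does the work, and I would structure the proof to make that dependence explicit, or alternatively phrase the contradiction using the full strength of whichever of \eqref{g2}, \eqref{g4} is in force.
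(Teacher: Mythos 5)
Your second half (the gauge rotation $e^{i\theta_0}u$) matches the paper exactly. But your first half has a genuine gap, and you correctly sense it but reach for the wrong fix.

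The ingredient you are missing is the boundary condition from Proposition~\ref{prop:5.1}: any nontrivial weak solution satisfies $\phi_\l(0)=(\alpha+\xi_\l)q(u)$. If $\phi_\l\equiv 0$, then since $u$ is nontrivial $q(u)\neq 0$, and the boundary condition forces $\alpha+\xi_\l=0$, i.e.\ $\l=\omega_\alpha$. Your deduction from the pointwise equation then gives $g(s)/s\to-\l$ as $s\to 0^+$, hence $\omega=\l=\omega_\alpha$ by \eqref{g2}. But \eqref{g2} requires $\omega\in(\omega_\alpha,+\infty)$, i.e.\ strictly $\omega>\omega_\alpha$. Contradiction. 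This is precisely the paper's argument, and it uses only \eqref{g1}--\eqref{g3}.

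Your proposed fall-back to \eqref{g4} is not available: Lemma~\ref{lem:5.3} is used inside Theorem~\ref{main2}, which is stated under \eqref{g1}--\eqref{g3} only, so the proof must not appeal to \eqref{g4}. You correctly observed that the equation alone yields merely $\l=\omega$ and that \eqref{g3} is powerless here, but without invoking the boundary condition you cannot pin $\l$ down to $\omega_\alpha$, and the contradiction slips away. The lesson is that this lemma is not purely a statement about the nonlinearity; it essentially uses that $\phi_\l$ is continuous at the origin and satisfies the coupling condition $\phi_\l(0)=(\alpha+\xi_\l)q(u)$, which encodes the strength $\alpha$ of the point interaction and makes \eqref{g2}'s requirement $\omega>\omega_\alpha$ do the work.
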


\begin{proof}
Since $u$ is a weak solution of \eqref{eq}, 
we have that $\phi_\l(0)=(\a +\xi_\l)q(u)$ by Proposition \ref{prop:5.1}.

Being $u$ nontrivial, if $\phi_\l \equiv 0$, then $q(u)\neq 0$. 
So, since one has $0=(\alpha + \xi_\l)q(u)$, 
we deduce that $\l = \omega_{\alpha}$.
On the other hand, we have 
\[
-\l q(u) \gl = g \big( q(u) \gl \big) \quad \text{for all} \ x\in \R^N \setminus \{0\}, 
\]
from which we deduce by \eqref{g2} that
\[
-\l = \limsup_{|x| \to \infty} \frac{ g \big( q(u) \gl(x) \big)}{q(u) \gl(x)}
=-\omega.
\]
This is a contradiction to the fact $\omega_{\alpha} < \omega$
and hence $\phi_\l \not \equiv 0$.

Next let us put 
\[
e^{i \theta} u= \tilde{\phi}_\l + q(e^{i \theta} u)\gl 
= \tilde{\phi}_\l + e^{i \theta} q(u) \gl \quad \text{for} \ \theta \in \R.
\]
By the gauge invariance of $g$,
multiplying \ef{eq:5.1} by $e^{i \theta}$, one finds that
\[
-\Delta \tilde{\phi}_\l - \l e^{i\theta} q(u) \gl
= g \left( \tilde{\phi}_\l + e^{i \theta}q(u) \gl \right).
\]
Choosing $e^{i \theta} = \frac{\overline{q(u)}}{|q(u)|}$ if $q(u) \ne 0$,
we have $e^{i \theta} q(u)= |q(u)|>0$
and hence we may assume that $q(u)$ is a non-negative real number.
\end{proof}

Our next step is to establish the Pohozaev identity 
corresponding to \ef{eq}.
For this purpose, we first prove the following 
pointwise estimate for the gradient near the origin. 

\begin{lemma} \label{lem:5.4}
Let $u \in \Ha$ be a nontrivial weak solution of \ef{eq}
and decompose $u= \phi_\l + q(u) \gl$ for $\l >0$.
Then for $\eps \in (0,1)$, it holds that
\[
\sup_{ |x| = \eps} | \nabla \phi_\l (x)| =
\begin{cases}
O(\eps^{2-p}) &\text{if} \ \ N=3 \ \ \text{and} \ \ \frac{5}{2} \le p <3, \\
O(\eps^{-\frac{1}{2}}) &\text{if} \ \ N=3 \ \ \text{and} \ \ 2<p< \frac{5}{2}, \\
O(1) &\text{if} \ \ N=2.
\end{cases}
\]
\end{lemma}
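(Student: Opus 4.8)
The plan is to study the equation $-\Delta \phi_\l = f_\l$ on small punctured balls, where $f_\l$ is the right-hand side in \eqref{eq:5.1}, and to track precisely how the singularity of $f_\l$ near the origin propagates to $\nabla \phi_\l$. The case $N=2$ is immediate: by Proposition \ref{prop:5.1}-\eqref{5.1-2} we already have $\phi_\l \in C^{1,\kappa}_{\rm loc}(\R^2)$, so $\nabla \phi_\l$ is bounded near the origin and $\sup_{|x|=\eps}|\nabla\phi_\l(x)|=O(1)$. So the work is entirely in dimension $N=3$, where $\gl(x)=\frac{e^{-\sqrt\l|x|}}{4\pi|x|}$ blows up like $|x|^{-1}$ and $\gl^{p-1}$ like $|x|^{-(p-1)}$, with $2<p<3$.

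First I would fix $\eps\in(0,1)$ and rescale: set $\phi_\l^{\eps}(y):=\eps^{-s}\phi_\l(\eps y)$ on an annulus $A:=\{1/2<|y|<2\}$, choosing the exponent $s$ so that the rescaled right-hand side is bounded in the appropriate $L^q(A)$ uniformly in $\eps$. From \eqref{eq:5.2} the dominant singular term near the origin is $|q(u)|^{p-1}\gl^{p-1}\sim |x|^{-(p-1)}$ (the terms $|\phi_\l|$, $|\phi_\l|^{p-1}$ are bounded by Proposition \ref{prop:5.1}-\eqref{5.1-3}, and $\gl\sim|x|^{-1}$ is milder than $\gl^{p-1}$ when $p>2$). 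A direct computation gives, for $-\Delta_y \phi_\l^\eps(y) = \eps^{2-s}f_\l(\eps y)$, that we need $s$ comparable to the growth rate of $f_\l$; splitting the two regimes:
\begin{itemize}
\item when $\frac52\le p<3$, $f_\l(\eps y)=O(\eps^{-(p-1)})$ on $A$, so with $s=2-p$ the rescaled equation has right-hand side bounded in $L^\infty(A)$, hence in every $L^q(A)$;
\item when $2<p<\frac52$, by Proposition \ref{prop:5.1}-\eqref{5.1-4} we have $\phi_\l\in H^2(\R^3)$, and one can instead use an $L^2$-based rescaling: with $s=\tfrac12$ the quantity $\eps^{-1/2}\|\phi_\l\|_{H^1(\eps A)}$ (after the scaling-invariant normalization) together with $\eps^{3/2-s}\|f_\l(\eps\,\cdot)\|_{L^2(A)}$ stays bounded, again by the explicit $|x|^{-(p-1)}$ bound since $2(p-1)<3$.
\end{itemize}
Then interior elliptic estimates ($W^{2,q}$ or $H^2$ estimates) on the fixed annulus $A$, away from both $|y|=1/2$ and $|y|=2$, give a uniform bound on $\|\nabla_y\phi_\l^\eps\|_{L^\infty(\{3/4<|y|<3/2\})}$; note the right-hand side estimate involves $\|f_\l\|$ on $A$ plus $\|\phi_\l^\eps\|$ on $A$, and the latter is controlled because $\phi_\l$ is continuous (hence bounded) near $0$ in the first case and $H^2$ in the second. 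Unrescaling via $\nabla\phi_\l(x)=\eps^{s-1}\nabla_y\phi_\l^\eps(x/\eps)$ and evaluating on $|x|=\eps$ yields $\sup_{|x|=\eps}|\nabla\phi_\l(x)|=O(\eps^{s-1})$, which is $O(\eps^{2-p-1})$? — here I must be careful: the correct bookkeeping gives exponent $1-s$ on the $\nabla$, i.e. $O(\eps^{-(1-s)})$, and with $s=p-1$ in the first regime this is $O(\eps^{2-p})$, while with the $L^2$-scaling $s=\tfrac12$ in the second regime it is $O(\eps^{-1/2})$, matching the statement.

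The main obstacle will be getting the scaling exponents and the uniformity of the elliptic constants exactly right, in particular making sure that the ``lower-order'' contributions — the term $\l q(u)\gl$ which only decays like $|x|^{-1}$, and the contribution of $\phi_\l$ itself on the annulus — are genuinely dominated by the stated rates and do not worsen them; for $p\ge\frac52$ this is clear since $|x|^{-1}\le|x|^{-(p-1)}$ near $0$, but for $2<p<\frac52$ one has $|x|^{-1}\ge|x|^{-(p-1)}$, so it is essential there to run the argument in the $H^2$ framework provided by Proposition \ref{prop:5.1}-\eqref{5.1-4} rather than a pointwise-$L^\infty$ framework, and to check that $\eps^{-1/2}$ is indeed the resulting rate — which it is, because the scaling-critical Sobolev norm $\|\nabla\phi_\l\|_{L^2}$ is dimensionless in $\R^3$ while passing to $L^\infty$ on the unit-size annulus costs exactly the factor turning $\|\cdot\|_{H^2(\eps A)}$-control into the $\eps^{-1/2}$ pointwise bound. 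A secondary point to verify is that the constant $|q(u)|$, which by Lemma \ref{lem:5.3} may be taken non-negative, is finite and fixed, so it contributes only to the implicit constant in the $O(\cdot)$.
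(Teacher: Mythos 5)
Your plan — rescale $\phi_\lambda^\eps(y)=\eps^{-s}\phi_\lambda(\eps y)$ on a fixed annulus $A$ and apply interior elliptic estimates — is quite different from the paper's, which simply writes $\nabla\phi_\lambda$ explicitly as the Newtonian potential of $f_\lambda$ and estimates the convolution $\int |f_\lambda(y)|\,|x-y|^{-2}\,dy$ by splitting $\R^3$ into five regions according to the sizes of $|y|$ and $|x-y|$ relative to $\eps$. Unfortunately, as described, your approach has a genuine gap at exactly the step you gloss over.

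The interior $W^{2,q}$ estimate you invoke is of the form
\[
\|\phi_\lambda^\eps\|_{W^{2,q}(A')} \le C\left(\|\phi_\lambda^\eps\|_{L^q(A)} + \|\Delta\phi_\lambda^\eps\|_{L^q(A)}\right),
\]
and you assert the first term ``is controlled because $\phi_\lambda$ is continuous (hence bounded) near $0$.'' That is not enough: boundedness of $\phi_\lambda$ on $\eps A$ gives $\|\phi_\lambda^\eps\|_{L^\infty(A)}=\eps^{-s}\|\phi_\lambda\|_{L^\infty(\eps A)}=O(\eps^{-s})$, which \emph{blows up} for any $s>0$. The culprit is that $\phi_\lambda(0)=(\alpha+\xi_\lambda)q(u)$ is generically nonzero, so $\phi_\lambda$ does not vanish at the origin, and the constant part is the worst offender under rescaling. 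With the correct scaling exponent — which is $s=3-p$ in the first regime, not $s=2-p$ or $s=p-1$ as you wrote (one needs $s-1=2-p$ and $3-p-s=0$, both give $s=3-p$) — the elliptic estimate yields only $\sup_{|x|=\eps}|\nabla\phi_\lambda|=O(\eps^{s-1-s})=O(\eps^{-1})$, far from the claimed $O(\eps^{2-p})$. Subtracting the constant ($\tilde\phi=\phi_\lambda-\phi_\lambda(0)$) is the natural fix, but then you need the quantitative vanishing $|\tilde\phi(x)|\lesssim|x|^{3-p}$, and the Hölder exponent coming from $\phi_\lambda\in W^{2,q_0}_{\rm loc}$ with $q_0<3/(p-1)$ is only $\kappa=2-3/q_0<3-p$, strictly short of what is required; closing the $\eps^{\kappa-(3-p)}$ gap would demand a bootstrap whose first iterate is essentially the estimate being proved. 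The same structural obstruction appears in the regime $2<p<\tfrac52$: here $H^2(\R^3)\hookrightarrow C^{0,1/2}$ gives exactly $|\tilde\phi(x)|\lesssim|x|^{1/2}$, so after subtracting the constant the rescaling with $s=\tfrac12$ would work — but you did not subtract the constant, and as written the argument fails identically. The paper's direct representation
\[
\frac{\partial\phi_\lambda}{\partial x_i}(x)=-\frac{1}{4\pi}\int_{\R^3}\frac{x_i-y_i}{|x-y|^3}\,f_\lambda(y)\,dy
\]
bypasses all of this: $\nabla$ kills the constant automatically, and the rest is an elementary (if tedious) decomposition of the convolution integral using the pointwise bound $|f_\lambda(y)|\le C|y|^{1-p}$ near the origin and $f_\lambda\in L^2(\{|y|\ge 1\})$. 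If you want to salvage the rescaling route you would need to (i) subtract $\phi_\lambda(0)$ before rescaling, (ii) track the sharp Hölder/pointwise rate of $\phi_\lambda-\phi_\lambda(0)$, and (iii) get the exponents right; as it stands, both the bookkeeping of $s$ and the treatment of the zero-order term are incorrect.
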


\begin{proof}
By Proposition \ref{prop:5.1}-\eqref{5.1-2}, 
we know that $|\n \phi_{\l}|$ is locally bounded if $N=2$.
Thus it remains to consider the case $N=3$.

Now by Proposition \ref{prop:5.1}-\eqref{5.1-1}, one knows that
$\phi_\l \in C^1_ {\rm loc}(\Omega)$, 
for any $\Omega \subset \R^3 \setminus \{0\}$.
Thus from \ef{eq:5.1}, we can write $\phi_{\l}$ as
\[
\phi_\lambda(x)
=\frac{1}{4 \pi} \int_{\R^3} \frac{f_\lambda(y)}{|x-y|} \,d y 
\quad \text{for} \ x \in \mathbb{R}^3 \setminus \{0 \} 
\]
and
\[
\frac{\partial \phi_\lambda}{\partial x_i}(x)
=-\frac{1}{4 \pi} \int_{\mathbb{R}^3} \frac{x_i-y_i}{|x-y|^3} f_\lambda(y) \,d y, 
\quad (i=1,2,3) .
\]
Especially for $|x| = \eps$, one has
\[
\left| \frac{\partial \phi_\lambda}{\partial x_i} (x) \right| 
\le \frac{1}{4 \pi} \int_{\mathbb{R}^3} \frac{| f_\lambda(y)|}{|x-y|^2} \,dy
\]
and hence 
\begin{equation} \label{eq:5.3}
\sup _{|x| = \eps} | \nabla \phi_\l (x) |
\le \frac{\sqrt{3}}{4 \pi} \sup _{|x| = \eps} \int_{\mathbb{R}^3} 
\frac{|f_\l(y)|}{|x-y|^2} \,d y.
\end{equation}
Next we estimate the convolution term as follows.
\[
\begin{aligned}
\int_{\R^3} \frac{|f_\l(y)|}{|x-y|^2} \,d y
&=\int_{\{|x-y| \le \frac{\varepsilon}{2}\}} \frac{|f_\l(y)|}{|x-y|^2} \,d y
+\int_{\{\frac{\varepsilon}{2} \le |x-y|, \ |y| \le \varepsilon \}} 
\frac{|f_\l(y)|}{|x-y|^2} \,d y 
+\int_{\{\frac{\varepsilon}{2} \le |x-y|, \ 1 \le |y|\}} \frac{|f_\l(y)|}{|x-y|^2} \,d y \\
&\quad +\int_{\{\frac{\varepsilon}{2} \le |x-y| \le 1, \ \eps \le |y| \le 1\}}
\frac{|f_\lambda(y)|}{|x-y|^2} \,d y 
+\int_{\{ 1 \le |x-y|, \ \eps \le |y| \le 1\}} \frac{|f_\l(y)|}{|x-y|^2} \,d y \\
&=: ({\rm I})+ ({\rm II}) + ({\rm III}) + ({\rm IV}) + ({\rm V}).
\end{aligned}
\]
We note that by \ef{eq:2.2}, \ef{eq:5.2} and Proposition \ref{prop:5.1}-\eqref{5.1-1},
it follows that
\begin{equation} \label{eq:5.4}
\left|f_\lambda(y)\right| \le \frac{C}{|y|^{p-1}} 
\quad \text { for } \ y \in B_1 \setminus \{0 \} \ 
\text{and some} \ C>0.
\end{equation}
Moreover since $\gl$ decays exponentially at infinity,
$\phi_\l \in H^1(\R^3)$ and $2(p-1) < 6$,
we also have from \ef{eq:5.2} that
\begin{equation} \label{eq:5.5}
f_\l \in L^2\big(\{|y| \ge 1\}\big).
\end{equation}

First we observe that if $|x-y| \le \frac{\eps}{2}$ and $|x|= \eps$, then
\[
\frac{\eps}{2} \le |x| -|x-y| \le |y| \le |x| +|x-y| 
\le \frac{3\eps}{2}.
\]
Thus from \ef{eq:5.4}, one has
\[
({\rm I} )\le C \varepsilon^{1-p} \int_{\{|x-y| \le 
\frac{\eps}{2}\}} \frac{1}{|x-y|^2} \,d y =O\left(\varepsilon^{2-p}\right) .
\]
By using \ef{eq:5.4} and from $2<p<3$, we also have
\[
({\rm II}) \le C \varepsilon^{-2} \int_{\{|y| \le \varepsilon\}} 
\frac{1}{|y|^{p-1}} \,d y =O\left(\varepsilon^{2-p}\right) .
\]
Next by the Schwarz inequality and \ef{eq:5.5}, it holds that
\[
(\text {III}) \le
\left(\int_{\{1 \le |y|\}}\left|f_\lambda(y)\right|^2 \,d y\right)^{\frac{1}{2}}
\left(\int_{\{\frac{\eps}{2} \le |x-y|\}} \frac{1}{|x-y|^4} \,d y\right)^{\frac{1}{2}} 
=O\left(\varepsilon^{-\frac{1}{2}}\right). 
\]
From \ef{eq:5.4}, one also finds that
\[
\begin{aligned}
(\rm{IV}) & \le \left(\int_{\{\varepsilon \le |y| \le 1\}}
\left|f_\l(y)\right|^2 \,d y\right)^{\frac{1}{2}}
\left(\int_{\{\frac{\varepsilon}{2} \le |x-y| \le 1\}} 
\frac{1}{|x-y|^{4}} \,d y\right)^{\frac{1}{2}} \\
&\le C \left(\int_{\varepsilon}^1 r^{4-2p} \,d r\right)^{\frac{1}{2}}
\left(\int_{\frac{\eps}{2}}^1 r^{-2} \,d r\right)^{\frac{1}{2}}
=O\left(\varepsilon^{2-p}\right).
\end{aligned}
\]
Finally using \ef{eq:5.4}, we obtain
\[
({\rm V}) \le \left(\int_{\{\varepsilon \le |y| \le 1\}}
\left|f_\l(y)\right|^2 \,dy \right)^{\frac{1}{2}}
\left(\int_{\{1 \le |x-y|\}} \frac{1}{|x-y|^4} \,d y\right)^{\frac{1}{2}} 
=O\left(\varepsilon^{\frac{5-2p}{2}}\right).
\]
Thus from \ef{eq:5.3}, we deduce that
\[
\sup_{|x| = \varepsilon} \left|\nabla \phi_\lambda(x)\right|
=O\left(\varepsilon^{2-p}\right)
+O\left(\varepsilon^{-\frac{1}{2}}\right)
+O\left(\varepsilon^{\frac{5-p}{2}}\right) .
\]
Noticing that
\[
2-p \le - \frac{1}{2} < \frac{5-2p}{2} \le 0 \ \ \text{if} \ \ \frac{5}{2} \le p <3
\quad \text{and} \quad 
-\frac{1}{2} < 2-p < 0 < \frac{5-2p}{2} \ \ \text{if} \ \ 2<p<\frac{5}{2}, 
\]
we conclude.
\end{proof}

Now we are ready to show the Pohozaev identity for \ef{eq}.

\begin{lemma} \label{lem:5.5}
Let $u \in \Ha$ be a nontrivial weak solution of \ef{eq}
and decompose $u= \phi_\l + q(u) \gl$ for $\l >0$.
Then $u$ satisfies the Pohozaev identity \eqref{eq:5.6}.
\end{lemma}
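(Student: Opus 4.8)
The plan is to obtain the Pohozaev identity by the standard multiplication of the equation \ef{eq:5.1} by the Pohozaev multiplier $x \cdot \nabla \phi_\l$, but carried out carefully on the punctured domain $\R^N \setminus B_\eps$ and then passing to the limit $\eps \to 0^+$. Concretely, I would fix $R>0$ large and integrate the identity $-\Delta \phi_\l \cdot (x \cdot \nabla \phi_\l) = f_\l \cdot (x \cdot \nabla \phi_\l)$ over the annulus $A_{\eps,R} := \{ \eps < |x| < R \}$. On such an annulus $\phi_\l$ is $C^{1,\kappa}$ by Proposition \ref{prop:5.1}-\ef{5.1-1} (with $\phi_\l \in H^2_{\rm loc}$ away from the origin), so all the classical integration-by-parts manipulations are legitimate there. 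The left-hand side produces, via the divergence theorem, the usual bulk term $\frac{N-2}{2} \int_{A_{\eps,R}} |\nabla \phi_\l|^2 \,dx$ together with boundary integrals over $\{|x|=\eps\}$ and $\{|x|=R\}$; the right-hand side, after writing $f_\l = \l q(u) \gl + g(u)$ and $x \cdot \nabla \phi_\l = x \cdot \nabla u - q(u)\, x \cdot \nabla \gl$, splits into a term involving $G(u)$ (handled by $g(u)\cdot x\cdot\nabla u = x \cdot \nabla (G(u))$ and another divergence-theorem step) and several terms involving $\gl$, $x \cdot \nabla \gl$, and $q(u)$.

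The key steps, in order, are: (i) on $A_{\eps,R}$, derive the finite-$\eps$, finite-$R$ identity by integration by parts, keeping all boundary terms explicit; (ii) let $R \to +\infty$, using $\phi_\l \in H^1(\RN)$, the estimates \ef{gstima}--\ef{GCstima}, and the exponential decay of $\gl$ to kill the boundary terms at $|x|=R$ and to justify convergence of the volume integrals over $\{|x|>\eps\}$ to integrals over $\RN$ (recalling that $\gl, \gl^{p-1} \in L^1 \cap L^{q_0}$ for suitable $q_0$ by Proposition \ref{prop:2.1}-\ef{2.1-2}, and that $\|\gl\|_2^2 < \infty$ appears in \ef{eq:5.6} via Proposition \ref{prop:2.1}-\ef{2.1-5}); (iii) let $\eps \to 0^+$ and show every boundary integral over $\{|x|=\eps\}$, as well as every volume integral containing a product of singular factors, converges — the surviving contributions being exactly the terms $-\l \|\gl\|_2^2 |q(u)|^2 + (N-2)(\alpha+\xi_\l)|q(u)|^2$ in \ef{eq:5.6}, which arise from the cross terms between the singular part and the boundary layer at the origin. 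Here the boundary condition \ef{bdry}, $\phi_\l(0) = (\alpha+\xi_\l)q(u)$, together with Lemma \ref{lem:2.2} (describing $x\cdot\nabla\gs$ and its gradient near the origin), is used to identify the $\eps \to 0$ limits of the boundary integrals.

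The main obstacle — and the reason Lemma \ref{lem:5.4} was proved first — is controlling the boundary integrals on $\{|x|=\eps\}$ that are quadratic or bilinear in $\nabla\phi_\l$ and $\nabla\gl$, since both gradients blow up at the origin when $N=3$. A term like $\eps \int_{|x|=\eps} |\nabla\phi_\l|^2 \,dS$ is a priori of size $\eps \cdot \eps^2 \cdot (\sup_{|x|=\eps}|\nabla\phi_\l|)^2$, which by Lemma \ref{lem:5.4} is $O(\eps^{3} \cdot \eps^{2(2-p)}) = O(\eps^{7-2p}) \to 0$ for $p < 7/2$, hence in particular for $2<p<3$; similarly the cross terms $\eps \int_{|x|=\eps} (\partial_\nu \phi_\l)(x\cdot\nabla\gl)\,dS$ and $\eps\int_{|x|=\eps}(\partial_\nu\gl)(x\cdot\nabla\phi_\l)\,dS$ must be estimated using Lemma \ref{lem:5.4}, $\gl \sim \gs = \frac{1}{4\pi|x|}$, and $x\cdot\nabla\gs = -\gs$ from Lemma \ref{lem:2.2}-\ef{2.2-1}; the delicate point is that the purely singular boundary term $\eps\int_{|x|=\eps}(\partial_\nu\gl)(x\cdot\nabla\gl)\,dS$ does \emph{not} vanish but converges to a nonzero multiple of $\l\|\gl\|_2^2|q(u)|^2$, which is precisely the non-standard term in \ef{eq:5.6}. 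For $N=2$ all gradients are bounded near the origin by Proposition \ref{prop:5.1}-\ef{5.1-2} and Lemma \ref{lem:2.2}-\ef{2.2-2}, so the $\eps\to 0$ passage is routine and the coefficient of $|q(u)|^2$ is read off from Proposition \ref{prop:2.1}-\ef{2.1-5}. Finally, one checks that the assembled limit identity is exactly \ef{eq:5.6}, which also matches the formal computation $\frac{d}{dt}I(u(\cdot/t))|_{t=1}=0$ recorded in Remark \ref{rem:2.6}.
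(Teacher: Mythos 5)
Your overall strategy agrees with the paper's: multiply by a Pohozaev‑type multiplier, integrate over the annulus $A_{\eps,R}=\{\eps<|x|<R\}$, use Lemma~\ref{lem:5.4} to control the blow‑up of $\nabla\phi_\l$ at $|x|=\eps$, use Lemma~\ref{lem:2.2} for $x\cdot\nabla\gs$ near the origin, and pass $R\to\infty$, $\eps\to0^+$. Two concrete points, however, are off.

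First, you attribute the term $-\lambda\|\gl\|_2^2|q(u)|^2$ in \ef{eq:5.6} to a surviving boundary integral $\eps\int_{|x|=\eps}(\partial_\nu\gl)(x\cdot\nabla\gl)\,dS$. This is not where it comes from, and the claim is dimensionally inconsistent: for $N=3$ such a boundary integral would converge to the $\l$-independent constant $-\tfrac{1}{4\pi}$, whereas $\lambda\|\gl\|_2^2=\tfrac{\sqrt\l}{8\pi}$ depends on $\l$. In the paper's proof the contribution $-\tfrac{N\l|q|^2}{2}\|\gl\|_2^2$ arises as the \emph{volume} integral produced by $\lambda|q|^2\gl(x\cdot\nabla\gl)=\tfrac{\lambda|q|^2}{2}\operatorname{div}(|\gl|^2x)-\tfrac{N\lambda|q|^2}{2}|\gl|^2$ (the boundary pieces of this term vanish), and the final coefficient $-\l\|\gl\|_2^2|q|^2$ is then obtained by combining this volume term with $-(N-2)\l\re\langle\phi_\l,q\gl\rangle$ via the algebraic identity $\|\phi_\l\|_2^2-\|u\|_2^2=-2\re\langle\phi_\l,q\gl\rangle-|q|^2\|\gl\|_2^2$. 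No singular boundary term survives; they all tend to $0$ except the one producing $(N-2)(\alpha+\xi_\l)|q|^2$.

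Second, your bookkeeping is incomplete on the cross term. Multiplying only by $x\cdot\nabla\bar\phi_\l$ and then rewriting $x\cdot\nabla\bar\phi_\l=x\cdot\nabla\bar u-\bar q\,x\cdot\nabla\gl$ leaves you with $-\bar q\,g(u)\,(x\cdot\nabla\gl)$, which is not a divergence and which you do not explain how to integrate. The paper avoids this by multiplying directly by $x\cdot\nabla\bar u$; the cross term is then $\bar q\,(x\cdot\nabla\gl)\Delta\phi_\l$, and the key technical step you omit is the \emph{double} integration by parts together with the identity
\[
\Delta\big(x\cdot\nabla\gl\big)=\l\,x\cdot\nabla\gl+2\l\,\gl\qquad (x\ne0),
\]
which converts $-\bar q\int\nabla(x\cdot\nabla\gl)\cdot\nabla\phi_\l$ into $\bar q\int\Delta(x\cdot\nabla\gl)\,\phi_\l$ plus boundary terms; the boundary piece $\bar q\int_{|x|=\eps}\phi_\l\,\nabla(x\cdot\nabla\gl)\cdot\nu\,dS$ is exactly what converges (using continuity of $\phi_\l$ at $0$ and Lemma~\ref{lem:2.2}) to $-(N-2)\bar q\,\phi_\l(0)=-(N-2)(\alpha+\xi_\l)|q(u)|^2$, giving the $(N-2)(\alpha+\xi_\l)|q|^2$ term in \ef{eq:5.6}. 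Without this step the identity cannot be assembled. Your treatment of the quadratic boundary term $\eps\int_{|x|=\eps}|\nabla\phi_\l|^2\,dS=O(\eps^{7-2p})$ is correct and matches the paper.
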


\begin{proof}
In the following, for brevity, we set $q:=q(u)$. 
We recall that $\phi_\l$ satisfies
\[
\Delta \phi_\lambda +\lambda q \gl
+g(\phi_\lambda+q \gl)=0 \quad \text {in} \ \mathbb{R}^N.
\]
Multiplying this equation by $x \cdot \nabla (\overline{\phi_\l + q \gl})$,
one has
\[
\begin{aligned}
\re \left\{ (x \cdot \nabla \overline{\phi_\lambda}) \Delta \phi_\lambda \right\}
&=\re \left\{ 
\operatorname{div}\big( (x \cdot \nabla \overline{\phi_\lambda}) 
\nabla \phi_\lambda\big)
-\nabla\left(x \cdot \nabla \overline{\phi_\lambda} \right) 
\cdot \nabla \phi_\lambda \right\},  \\
\re \left\{ x \cdot \nabla (\overline{\phi_\lambda+q \gl}) g(\phi_\lambda+q \gl) \right\}
&= \re \left\{ \operatorname{div} \big( G(\phi_\lambda+q \gl ) x \big)
-N G(\phi_\lambda+q \gl) \right\}, \\
\lambda |q|^2 \gl (x \cdot \nabla \gl) 
&=\frac{\lambda |q|^2}{2} x \cdot \nabla | \gl|^2
=\frac{\l {|q|}^2}{2} \operatorname{div} \big( |\gl|^2 x\big)
-\frac{N \lambda |q|^2}{2} | \gl|^2, \\
\re \left\{ \lambda q \gl (x \cdot \nabla \overline{\phi_\lambda}) \right\}
&= \re \left\{ \lambda q \operatorname{div}\left( \overline{\phi_\lambda} \gl x \right)
-N \lambda q \overline{\phi_\lambda} \gl 
-\lambda q \overline{\phi_\lambda} (x \cdot \nabla \gl) \right\}, \\
\re \left\{ \bar{q} (x \cdot \nabla \gl) \Delta \phi_\lambda \right\}
&= \re \left\{ 
\bar{q} \operatorname{div}\big((x\cdot \nabla \gl) \nabla \phi_\lambda\big)
- \bar{q} \nabla (x \cdot \nabla \gl ) \cdot \nabla \phi_\lambda \right\}.
\end{aligned}
\] 
Integrating them over $\{ x \in \R^N : \eps \le |x| \le R\}$
for $0<\eps <1 < R<+ \infty$,
using the divergence theorem
and taking the real part, we get
\begin{align} \label{eq:5.7}
&\re \int_{\{\eps \le |x| \le R\}} (x \cdot \nabla \overline{\phi_\lambda}) 
\Delta \phi_\lambda \,dx \\
&= \frac{N-2}{2} \int_{\{\eps \le |x| \le R\}} |\nabla \phi_\lambda |^2 \,d x 
+ \re \int_{\{|x|=R\}} 
(x \cdot \nabla \overline{\phi_\lambda}) (\nabla \phi_\lambda \cdot \nu) \,dS
+ \re \int_{\{|x| = \eps\} }
(x \cdot \nabla \overline{\phi_\lambda}) 
(\nabla \phi_\lambda \cdot \nu) \,dS\notag
\\
&\quad -\frac R2\int_{\{|x| = R\} }|\n \phi_{\l}|^2\, dS
+\frac \eps2\int_{\{|x| = \eps\} }|\n \phi_{\l}|^2\, dS, \notag
\end{align}
\begin{align} \label{eq:5.8}
& \re \int_{\{\eps \le |x| \le R\}} 
x \cdot \nabla (\overline{\phi_\lambda +q \gl}) g(\phi_\lambda+q \gl) \,dx \\
&=-N \int_{\{\eps \le |x| \le R\}} G(\phi_\lambda+q \gl) \,d x 
+ \int_{\{|x|=R\}} G (\phi_\lambda+ q \gl) (x \cdot \nu) \,dS
+ \int_{\{|x|=\varepsilon\}} G (\phi_\lambda+q \gl ) (x \cdot \nu) \,dS, \notag
\end{align}
\begin{align} \label{eq:5.9}
& \re \int_{\{\eps \le |x| \le R\}} \lambda |q|^2 \gl (x \cdot \nabla \gl) \,dx \\
&= -\frac{N \l |q|^2}{2} \int_{\{\varepsilon \le |x| \le R\}} |\gl|^2 \,d x 
+\frac{\l |q|^2}{2} \int_{\{|x| =R\}} |\gl|^2 (x \cdot \nu) \,dS
+\frac{\l |q|^2}{2} \int_{\{|x| = \eps\}} |\gl|^2 (x \cdot \nu) \,dS, \notag
\end{align}
\begin{align} \label{eq:5.10}
\re &\int_{\{\eps \le |x| \le R\}} \left\{ \l q \gl (x \cdot \nabla \overline{\phi_\l})
+\bar{q} (x \cdot \n \gl) \Delta \phi_\l \right\} \,dx \\
&= \re \Big\{ -N \lambda q \int_{\{\varepsilon \le |x| \le R\}} \overline{\phi_\l} \gl \,d x
-\lambda q \int_{\{\varepsilon \le |x| \le R\}} \overline{\phi_\lambda} 
(x \cdot \nabla \gl) \,d x 
-\bar{q} \int_{\{\varepsilon \le |x| \le R\}} 
\nabla (x \cdot \nabla \gl) \cdot \nabla \phi_\l \,d x \notag \\
&\quad 
+\lambda q \int_{\{|x|=R\}} \overline{\phi_\l} \gl (x \cdot \nu) \,dS
+\bar{q} \int_{\{|x|=R\}} (x \cdot \nabla \gl) 
(\nabla \phi_\lambda \cdot \nu) \,dS \notag \\
&\quad 
+ \lambda q \int_{\{|x| =\varepsilon\}} \overline{\phi_\lambda} \gl (x \cdot \nu) \,dS
+\bar{q} \int_{\{|x|=\varepsilon\}} (x \cdot \nabla \gl) 
(\nabla \phi_\lambda \cdot \nu) \,dS 
\Big\} \notag \\
&= \re \Big\{ -N \lambda q \int_{\{\varepsilon \le |x| \le R\}} \overline{\phi_\l} \gl \,d x
-\lambda q \int_{\{\varepsilon \le |x| \le R\}} \overline{\phi_\lambda} 
(x \cdot \nabla \gl) \,d x 
+\bar{q} \int_{\{\varepsilon \le |x| \le R\}} 
\Delta (x \cdot \nabla \gl ) \phi_\lambda \,d x \notag \\
&\quad 
+\lambda q \int_{\{|x|=R\}} \overline{\phi_\l} \gl (x \cdot \nu) \,dS
+\bar{q} \int_{\{|x|=R\}} (x \cdot \nabla \gl) (\nabla \phi_\lambda \cdot \nu) \,dS 
-\bar{q} \int_{\{|x|=R\}} \phi_\lambda \nabla (x \cdot \nabla \gl) \cdot \nu \,dS 
\notag \\
&\quad 
+ \lambda q \int_{\{|x| =\varepsilon\}} \overline{\phi_\lambda} \gl (x \cdot \nu) \,dS
+\bar{q} \int_{\{|x|=\varepsilon\}} (x \cdot \nabla \gl) 
(\nabla \phi_\lambda \cdot \nu) \,dS 
-\bar{q} \int_{\{|x|=\varepsilon\}} \phi_\lambda \nabla 
(x \cdot \nabla \gl) \cdot \nu \,dS
\Big\}. \notag
\end{align}
Here, for \eqref{eq:5.7}, we used that
\[
\begin{aligned}
&- \re \int_{\{\eps \le |x| \le R\}} \nabla (x \cdot \nabla \overline{\phi_\l}) 
\cdot \nabla \phi_\l \,dx
\\
& =-\int_{\{\varepsilon \le |x| \le R\}} 
\left\{ \frac{1}{2} \nabla (|\nabla \phi_\lambda |^2) \cdot x 
+| \nabla \phi_\l |^2 \right\} \,d x \\
&= -\int_{\{\varepsilon \le |x| \le R\}}
|\nabla \phi_\lambda|^2 \,d x
-\frac{1}{4} \int_{\{\eps \le |x| \le R\}} 
\nabla (|\nabla \phi_\lambda|^2) \cdot \nabla |x|^2 \,d x \\
&=-\int_{\{\varepsilon \le |x| \le R\}} |\nabla \phi_\l |^2 \,d x
+\frac{1}{4} \int_{\{\eps \le |x| \le R\}} |\nabla \phi_\l|^2 \Delta ( |x|^2 ) \,d x \\
&\quad 
-\frac{1}{4} \int_{\{|x|=R\}} |\nabla \phi_\l |^2 \nabla ( |x|^2) \cdot \nu \,d S 
-\frac{1}{4} \int_{\{|x|=\eps\}} |\nabla \phi_\l |^2 \nabla ( |x|^2) \cdot \nu \,d S \\
&= \frac{N-2}{2} \int_{\{\eps \le |x| \le R\}} |\nabla \phi_\lambda |^2 \,d x
-\frac R2\int_{\{|x| = R\} }|\n \phi_{\l}|^2\, dS
+\frac \eps2\int_{\{|x| = \eps\} }|\n \phi_{\l}|^2\, dS.
\end{aligned}
\]
Moreover since $\Delta \gl = \l \gl$ for $x \ne 0$, we also find that
\[
\begin{aligned}
\Delta\left(x \cdot \nabla \gl \right) 
& =\sum_{j=1}^N \frac{\partial^2}{\partial x_j^2}
\left(\sum_{i=1}^N x_i \frac{\partial \gl}{\partial x_i}\right) 
=\sum_{j=1}^N \frac{\partial}{\partial x_j}
\left(\sum_{i=1}^N x_i \frac{\partial^2 \gl}{\partial x_i \de x_j}
+\frac{\partial \gl}{\partial x_j}\right) \\
& =\sum_{i=1}^N \sum_{j=1}^N x_i 
\frac{\partial^3 \gl}{\partial x_i \partial x_j^2}
+2 \sum_{j=1}^N \frac{\partial^2 \gl}{\partial x_j^2} 
 =x \cdot \nabla (\Delta \gl)+ 2 \Delta \gl \\
&= \l x \cdot \nabla \gl + 2\l \gl.
\end{aligned}
\]
Thus one gets
\begin{equation} \label{eq:5.11}
\re \Big\{
-\lambda q \int_{\{\varepsilon \le |x| \le R\}} \overline{\phi_\lambda} 
(x \cdot \nabla \gl) \,d x 
+\bar{q} \int_{\{\varepsilon \le |x| \le R\}} 
\Delta (x \cdot \nabla \gl ) \phi_\lambda \,d x \Big\}
=2 \re \Big\{ \l \bar{q} \int_{\{\varepsilon \le |x| \le R\}} \phi_\l \gl \,d x \big\}.
\end{equation}
From \ef{eq:5.7}-\ef{eq:5.11}, we arrive at
\begin{align} \label{eq:5.12}
0&= \frac{N-2}{2} \int_{\{\eps \le |x| \le R\}} |\nabla \phi_\lambda |^2 \,d x
-N \int_{\{\eps \le |x| \le R\}} G(\phi_\lambda+q \gl) \,d x \\
&\quad -\frac{N \l |q|^2}{2} \int_{\{\varepsilon \le |x| \le R\}} |\gl|^2 \,d x 
-(N-2) \lambda \re 
\Big\{ \bar{q} \int_{\{\varepsilon \le |x| \le R\}} \phi_\l \gl \,d x \Big\}
+C_1(R) + C_2(\eps), \notag
\end{align}
where
\begin{align*}
&C_1(R) \\
&:= \re \Big\{
\int_{\{|x|=R\}} 
(x \cdot \nabla \overline{\phi_\lambda}) (\nabla \phi_\lambda \cdot \nu) \,dS
+ \int_{\{|x|=R\}} G (\phi_\lambda+ q \gl) (x \cdot \nu) \,dS
+\frac{\l |q|^2}{2} \int_{\{|x| =R\}} |\gl|^2 (x \cdot \nu) \,dS \\
&\quad +\lambda q \int_{\{|x|=R\}} \overline{\phi_\l} \gl (x \cdot \nu) \,dS
+\bar{q} \int_{\{|x|=R\}} (x \cdot \nabla \gl) (\nabla \phi_\lambda \cdot \nu) \,dS 
- \bar{q} \int_{\{|x|=R\}} \phi_\lambda
\nabla (x \cdot \nabla \gl) \cdot \nu \,dS 
\\
&\quad -\frac R2\int_{\{|x| = R\} }|\n \phi_{\l}|^2\, dS\Big\} ,
\end{align*}
\begin{align*}
&C_2(\eps) \\
&:= \re \Big\{
\int_{\{|x| = \eps \}}
\left(x \cdot \nabla \overline{\phi_\lambda} \right) 
(\nabla \phi_\lambda \cdot \nu) \,dS
+ \int_{\{|x|=\varepsilon\}} G (\phi_\lambda+q \gl ) (x \cdot \nu) \,dS
+ \frac{\l |q|^2}{2} \int_{\{|x| = \eps\}} |\gl|^2 (x \cdot \nu) \,dS \\
&\quad 
+ \lambda q \int_{\{|x| =\varepsilon\}} \overline{\phi_\lambda} \gl (x \cdot \nu) \,dS
+\bar{q} \int_{\{|x|=\varepsilon\}} (x \cdot \nabla \gl) 
(\nabla \phi_\lambda \cdot \nu) \,dS 
- \bar{q} \int_{\{|x|=\varepsilon\}} \phi_\lambda
\nabla (x \cdot \nabla \gl) \cdot \nu \,dS
\\
&\quad +\frac \eps2\int_{\{|x| = \eps\} }|\n \phi_{\l}|^2\, dS\Big\} .
\end{align*}
Since $\phi_\l \in H^1(\R^N)$, 
$G(\phi_\l + q \gl) \in L^1(\R^N)$,
$\gl \in L^2(\R^N)$
and $\nabla \gl$ decays exponentially at infinity,
arguing as in \cite[P. 321, Proof of Proposition 1]{BL}, 
it follows that
\begin{equation} \label{eq:5.13}
C_1(R) \to 0 \quad \hbox{as} \ \ R \to +\infty.
\end{equation}

Next we recall that $\phi_{\l} \in L^{\infty}_{\rm loc}(\R^N)$
and on the set $\{x\in \RN : |x|=\eps\}$, we have
\[
\gl = 
\begin{cases} 
O(\eps^{-1}) &(N=3), \\
O(|\log \eps|) &(N=2).
\end{cases}
\]
Then one finds that
\begin{align*}
\int_{\{|x| = \eps\}} |\gl|^2 (x \cdot \nu) \,dS
&= \begin{cases} 
O(\eps) &(N=3), \\
O(\eps^2 |\log \eps|^2) &(N=2),
\end{cases} \\
\int_{\{|x| = \eps\}} |\gl|^p (x \cdot \nu) \,dS
&= \begin{cases} 
O(\eps^{3-p}) &(N=3), \\
O(\eps^2 |\log \eps|^p) &(N=2),
\end{cases} \\
\int_{\{|x| =\varepsilon\}} \overline{\phi_\lambda} \gl (x \cdot \nu) \,dS
&= \begin{cases} 
O(\eps^2) &(N=3), \\
O(\eps^2 |\log \eps|) &(N=2).
\end{cases}
\end{align*}
Moreover by \eqref{GCstima}, we find that
\[
\int_{\{ |x|=\varepsilon \}}\!\! G (\phi_\lambda+q \gl ) (x \cdot \nu) \,dS
+ \frac{\l |q|^2}{2} \int_{ \{ |x| = \eps \}}\!\! |\gl|^2 (x \cdot \nu) \,dS 
+ \l q \int_{\{ |x| =\varepsilon \}} \!\!\overline{\phi_\lambda} \gl (x \cdot \nu) \,dS 
\to 0 \quad \text{as} \ \eps \to 0^+.
\]
Next by Lemma \ref{lem:5.4}
and using the fact that, on the set $\{x\in \RN : |x|=\eps\}$,
\[
x \cdot \nabla \gl = 
\begin{cases} 
O(\eps^{-1}) &(N=3), \\
O(1) &(N=2),
\end{cases}
\]
we have
\begin{align*}
\int_{\{|x| = \eps \}}
\left(x \cdot \nabla \overline{\phi_\lambda}\right) 
(\nabla \phi_\lambda \cdot \nu) \,dS
&= \begin{cases} 
O(\eps^{7-2p}) &\quad \text{if} \ \ N=3 \ \text{and} \ \frac{5}{2} \le p <3, \\
O(\eps^2) &\quad \text{if} \ \ N=3 \ \text{and} \ 2< p< \frac{5}{2}, \\
O(\eps^2) &\quad \text{if} \ \ N=2,
\end{cases} \\
\int_{\{|x| = \eps\} }
\left(x \cdot \nabla \gl \right) (\nabla \phi_\lambda \cdot \nu) \,dS
&= \begin{cases} 
O(\eps^{3-p}) &\quad \text{if} \ \ N=3 \ \text{and} \ \frac{5}{2} \le p <3, \\
O(\eps^{\frac{1}{2}}) &\quad \text{if} \ \ N=3 \ \text{and} \ 2< p< \frac{5}{2}, \\
O(\eps) &\quad \text{if} \ \ N=2,
\end{cases}
\\
\frac \eps 2\int_{\{|x| = \eps \}}
|\nabla \phi_\lambda|^2 \,dS
&= \begin{cases} 
O(\eps^{7-2p}) &\quad \text{if} \ \ N=3 \ \text{and} \ \frac{5}{2} \le p <3, \\
O(\eps^2) &\quad \text{if} \ \ N=3 \ \text{and} \ 2< p< \frac{5}{2}, \\
O(\eps^2) &\quad \text{if} \ \ N=2.
\end{cases} 
\end{align*}
Finally we show that
\begin{equation} \label{eq:5.14}
\int_{\{|x|=\varepsilon\}} \phi_\lambda
\nabla (x \cdot \nabla \gl) \cdot \nu \,dS
\rightarrow -(N-2) \phi_\lambda(0)
\quad \text { as } \ \varepsilon \rightarrow 0^+.
\end{equation}
Indeed, it suffices to consider $\gs$.
In the case $N=3$, it follows from 
Lemma \ref{lem:2.2}-\ef{2.2-1} and $\nu= - \frac{x}{|x|}$ on $|x|=\eps$ that 
\[
\left| \int_{\{|x|= \eps\}} \nabla ( x \cdot \nabla \gs) \cdot \nu \,dS \right| \le 1
\]
and 
\[
\int_{\{|x|=\eps\}} \phi_\l (0) \nabla (x \cdot \gs) \cdot \nu \,dS
= -\int_{\{|x| = \eps\}} \frac{ \phi_\l(0) }{4\pi |x|^2} \,dS 
= - \phi_\l(0).
\]
Thus we have
\[
\begin{aligned}
\left| \int_{\{|x|=\varepsilon\}} \phi_\lambda
\nabla (x \cdot \nabla \gs) \cdot \nu \,dS
+ \phi_\l(0)\,\right| 
&= \left| \int_{\{|x|=\eps \}} 
\big( \phi_\lambda(x) - \phi_\l(0) \big)
\nabla (x \cdot \nabla \gs ) \cdot \nu \,dS \right| \\
& \le \sup_{|x|=\varepsilon} 
\left| \phi_\l(x) - \phi_\l(0) \right| 
\rightarrow 0 \quad \text { as } \ \varepsilon \rightarrow 0^+. 
\end{aligned}
\]
When $N=2$, using Lemma \ref{lem:2.2}-\ef{2.2-2}, we also have
\[
\left| \int_{\{|x|=\varepsilon\}} 
\phi_\lambda \nabla(x \cdot \nabla \gl) \cdot \nu \,dS \right|
\le C \eps  \rightarrow 0 
\quad \text {as} \ \varepsilon \rightarrow 0^+,
\]
from which we deduce that \ef{eq:5.14} holds.

Now from \ef{eq:5.13} and \ef{eq:5.14}, 
passing a limit $R \to +\infty$ and $\eps \to 0^+ $ in \ef{eq:5.12},
we find that
\begin{align*} 
0&= \frac{N-2}{2} \| \nabla \phi_\l \|_2^2 
-N \int_{\R^N} G(\phi_\lambda+q \gl) \,d x \\
&\quad -\frac{N \l |q|^2}{2} \| \gl \|_2^2  
-(N-2) \lambda \re \langle \phi_\l , q \gl \rangle 
+(N-2) \re\left\{ \bar{q} \phi_\l(0) \right\}.
\end{align*}
Using 
\[
\| \phi_\l\|_2^2 - \| u \|_2^2
= -2 \re \langle \phi_\l, q \gl \rangle  - |q|^2 \| \gl \|_2^2
\quad \text{and} \quad \phi_\l(0)= (\alpha + \xi_\l)q,
\]
we obtain \ef{eq:5.6}.
\end{proof}

\begin{remark} \label{rem:5.6}
Observe that in the case $N=3$, by Proposition \ref{prop:2.1}-\eqref{2.1-5}, 
the Pohozaev identity \eqref{eq:5.6} can be written also in the following way
\begin{equation*}
\frac{1}{2}\| \nabla \phi \|_2^2
+\frac{\lambda}{2}\|\phi \|_2^2- \frac{\lambda}{2}\| u \|_2^2
+\frac{1}{2}(\alpha+\xi_\l ) | q(u) |^2 
+\frac{1}{2} \alpha |q(u)|^2  -3 \int_{\R^3} G(u) \,dx =0.
\end{equation*}
\end{remark}

Now Theorem \ref{main2} 
follows by Proposition \ref{prop:5.1}, Lemma \ref{lem:5.3}
and Lemma \ref{lem:5.5}.

\section{Variational setting}\label{se:vs}

In this section, we introduce a variational setting of \ef{eq}.
Let us define the functional $I:\Ha\to \R$ as
\begin{equation*}
I(u)= \frac 12\|\n \phi_\l\|_2^2+\frac \l 2\|\phi_\l \|_2^2-\frac \l 2\|u\|_2^2
+\frac 12(\a +\xi_\l)|q(u)|^2
- \irn G(u) \,dx,
\end{equation*}
for $\lambda >0$ and 
$u=\phi_\l+q(u)\cg_\l\in \Ha$.
Clearly, if $u\in \H$, then 
\begin{equation*}
I(u)= \frac 12\|\n u\|_2^2
 - \irn G(u) \,dx.
\end{equation*}
By Proposition \ref{prop:2.1}-\eqref{2.1-2} and \eqref{GCstima},
one can see that $I$ is well defined on $\Ha$.
Next we show that $I$ is actually of the class $C^1$.
Although this seems to be standard,
we need to be careful in the case $N=3$ 
because of the less integrability of $\gl$. 

\begin{proposition} \label{prop:2.6}
The functional $I$ is of the class $C^1$ on $\Ha$. 
Moreover for any $u,v\in \Ha$, if $u=\phi_{\l}+q(u)\gl$ and $v=\psi_{\l}+q(v)\gl$, 
it follows that
\begin{align}\label{derivata}
I'(u)[v ]
&= \re \left\{ \langle \nabla \phi_\l, \nabla \psi_\l \rangle 
+ \l \langle \phi_\l, \psi_\l \rangle - \l \langle u,v \rangle 
+(\alpha + \xi_\l ) q(u) \overline{q(v)} 
- \irn g(u) \bar{v} \,dx \right\}. 
\end{align}
\end{proposition}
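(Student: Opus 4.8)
The plan is to follow the standard recipe for verifying $C^1$-regularity of an energy functional, splitting $I$ into the quadratic form part and the nonlinear part, but paying attention to the fact that $\Ha$ is not a standard Sobolev space and that $\gl \notin L^6(\R^3)$ when $N=3$. First I would fix $\l>0$ once and for all; by Lemma \ref{lem:2.3} the charge $q(u)$ is independent of $\l$, and the decomposition $u=\phi_\l+q(u)\gl$ is linear in $u$ by Lemma \ref{lem:2.5}-\eqref{2.5-1}, so the map $u\mapsto(\phi_\l,q(u))\in\H\times\C$ is a (bounded) linear isomorphism onto its image, and $\|u\|_{\Hal}^2=\|\nabla\phi_\l\|_2^2+\l\|\phi_\l\|_2^2+(\a+\xi_\l)|q(u)|^2$ is an equivalent Hilbert norm on $\Ha$. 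The quadratic part
\[
Q(u):=\tfrac12\|\nabla\phi_\l\|_2^2+\tfrac\l2\|\phi_\l\|_2^2-\tfrac\l2\|u\|_2^2+\tfrac12(\a+\xi_\l)|q(u)|^2
\]
is a continuous real bilinear form evaluated on the diagonal (note $\|u\|_2^2$ is itself continuous and quadratic in $(\phi_\l,q(u))$ since $\gl\in L^2$), so it is smooth with derivative $Q'(u)[v]=\re\{\langle\nabla\phi_\l,\nabla\psi_\l\rangle+\l\langle\phi_\l,\psi_\l\rangle-\l\langle u,v\rangle+(\a+\xi_\l)q(u)\overline{q(v)}\}$. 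This part is routine.

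The substantive step is the nonlinear term $\Phi(u):=\irn G(u)\,dx$. I would show it is $C^1$ with $\Phi'(u)[v]=\re\irn g(u)\bar v\,dx$. Write $u=\phi_\l+q(u)\gl$ and $v=\psi_\l+q(v)\gl$. From the growth bounds \eqref{gCstima}--\eqref{GCstima}, $|g(u)|\le c_1|u|+c_2|u|^{p-1}$, and $|u|$ is controlled by $|\phi_\l|+|q(u)|\gl$, where $\phi_\l\in H^1(\RN)$ lies in every $L^r$ with $2\le r\le 2^*$ and $\gl\in L^r(\RN)$ for $1\le r<3$ when $N=3$ (resp. all $r<\infty$ when $N=2$) by Proposition \ref{prop:2.1}-\eqref{2.1-2}. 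The key observation is that the constraint $2<p<3$ for $N=3$ (resp. $p>2$ for $N=2$) is exactly what makes $g(u)\bar v$ integrable: one needs $|\phi_\l|^{p-1}\in L^{p'}$, i.e. $p\le 2^*=6$, which holds, and $\gl^{p-1}\bar v$ integrable, for which $\gl^{p-1}\in L^{(3/(p-1))^-}$ with $3/(p-1)>3/2$ pairs against $v\in H^1\cap(q(v)\gl)$; writing out the finitely many Hölder pairings, each factor lands in an admissible Lebesgue exponent precisely because $p<3$. So $\Phi$ is finite and the candidate derivative is a bounded linear functional of $v$.

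To get differentiability I would use the standard difference-quotient argument: for $u,v$ fixed,
\[
\frac{G(u+tv)-G(u)}{t}=\re\{g(u+\theta_t v)\bar v\}\quad\text{a.e.}
\]
for some $\theta_t(x)\in(0,t)$, and dominate the right-hand side uniformly in $t\in(0,1)$ by an $L^1$ function built from $|\phi_\l|,|\psi_\l|,\gl$ via the same Hölder bookkeeping as above, then apply dominated convergence to conclude $\Phi'(u)[v]=\re\irn g(u)\bar v\,dx$. Continuity of $u\mapsto\Phi'(u)$ follows from continuity of the Nemytskii operator $u\mapsto g(u)$ from $\Ha$ into the appropriate dual space: if $u_n\to u$ in $\Ha$ then $\phi_{\l,n}\to\phi_\l$ in $H^1$ and $q(u_n)\to q(u)$, so $u_n\to u$ in every relevant Lebesgue space appearing above, and one uses the elementary fact (a consequence of the continuity of $s\mapsto g(s)$ together with the growth bound and a subsequence/a.e.-convergence argument à la Brezis–Lieb) that $g(u_n)\to g(u)$ in the dual, giving $\|\Phi'(u_n)-\Phi'(u)\|\to0$. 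The main obstacle throughout is purely the low integrability of $\gl$ in dimension three — one must check that every Hölder exponent that arises stays in the range $[1,3)$ for powers of $\gl$ (and $[2,6]$ for powers of $\phi_\l$), which is where the hypothesis $p<3$ in \eqref{g3} is used in an essential way; once this bookkeeping is in place the argument is the classical one. Combining $I'=Q'+\Phi'$ (note the sign) gives \eqref{derivata} and completes the proof.
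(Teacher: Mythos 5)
Your proposal is correct and follows essentially the same route as the paper: split $I=Q+\Phi$ into the quadratic form (whose Fr\'echet differentiability is immediate from the explicit polarization identity) and the Nemytskii term, establish G\^ateaux differentiability of $\Phi$ by dominated convergence, and then verify continuity of $\Phi'$ so that $\Phi\in C^1$. The one place where the paper spends real effort and you compress to ``\`a la Brezis--Lieb'' is the continuity of $\Phi'$ when $N=3$: because $\gl$ only lies in $L^r(\R^3)$ for $r<3$, one must pick a single H\"older exponent $q_0\in(\frac32,2]$ with $q_0(p-1)<3$ and $q_0'\in[2,3)$ so that $g(u_n)\to g(u_0)$ locally in $L^{q_0}$ while $\Ha\hookrightarrow L^{q_0'}$, together with a tail estimate on $\{|x|\ge R_\eps\}$ to get uniformity in the test function; your sketch that ``each factor lands in an admissible Lebesgue exponent precisely because $p<3$'' is the right observation, and once made explicit it yields exactly the paper's argument.
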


\begin{proof}
First we put $I(u)=I_1(u) + I_2(u)$ with
\[
\begin{aligned}
I_1(u) &:=\frac{1}{2} \|\nabla \phi_\lambda \|_2^2
+\frac{\lambda}{2} \|\phi_\lambda \|_2^2 -\frac{\lambda}{2}\| u\|_2^2
+\frac{1}{2} \left(\alpha+\xi_\lambda\right) | q(u)|^2, \\
I_2(u) &:= 
\int_{\mathbb{R}^N} G(u) \,d x.
\end{aligned}
\]
For $v= \psi_\lambda+q(v) \gl \in \Ha$, we also define
$L_1$, $L_2 \in \mathcal{L} \left( \Ha, \mathbb{R}\right)$ by
\[
\begin{aligned}
L_1(u)[ v] &:= \re \left\{ 
\left\langle\nabla \phi_\l, \nabla \psi_\l \right\rangle
+\lambda \left\langle \phi_\l, \psi_\lambda\right\rangle
-\lambda \langle u, v \rangle 
+\left(\alpha+\xi_\l \right) q(u) \overline{q(v)} \right\},  \\
L_2(u)[ v] &:= 
\re \int_{\mathbb{R}^N} g(u) \bar{v} \,d x.
\end{aligned}
\]
From \eqref{gCstima},
one finds that $L_1$ and $L_2$ are both bounded on $\Ha$.
Moreover for $\l > \omega_{\alpha}$, 
we have by  Lemma \ref{lem:2.5}-\eqref{2.5-1} that
\[
\begin{aligned}
I_1(u+v) &= \frac{1}{2} \| \nabla \phi_\l + \nabla \psi_\l \|_2^2
+ \frac{\l}{2} \| \phi_\l + \psi_\l \|_2^2
-\frac{\l}{2} \| u+v \|_2^2 
+\frac{1}{2} (\alpha + \xi_\l ) | q(u)+q(v) |^2 \\
&=I_1(u)+L_1(u)[v ]
+\frac{1}{2}\| \nabla \psi_\l \|_2^2 +\frac{\lambda}{2} \| \psi_\l \|_2^2
-\frac{\lambda}{2} \| v \|_2^2 
+\frac{1}{2}(\alpha + \xi_\l ) | q(v) |^2 \\
&= I_1(u) +L_1(u)[v] + \frac{1}{2} \| v \|_{\Hal}^2 - \frac{\l}{2} \|v\|_2^2.
\end{aligned}
\]
Thus one has 
\[
\big|I_1(u+v)-I_1(u)-L_1(u)[v ]\big| =
\frac{1}{2} \| v \|_{\Hal}^2 - \frac{\l}{2} \|v\|_2^2
\le \frac 12  \| v \|_{\Hal}^2
\]
yielding that $I_1$ is Frechet differentiable and $I_1'=L_1$.
Note that the differentiability of $I_1$ is independent of the choice of $\l$
and the decomposition $u= \phi_\l + q(u) \gl$ because of \ef{equiv}.
Furthermore one has
\[
\left\| I_1^{\prime}(u) \right\| = 
\sup_{v \in \Ha, \,\| v \|_{\Hal} \le 1} \big| I_1'(u)[v] \big|
\le \| u \|_{\Hal}
\]
from which we can conclude that $I_1'$ is continuous on 
$\mathcal{L} \left( \Ha, \mathbb{R}\right)$. 

Next we prove that $I_2$ is of the class $C^1$.
For this purpose, we first claim that
\begin{equation} \label{eq:2.12}
\left|\frac{1}{t} \big\{I_2(u+t v)-I_2(u)-t L_2(u) [v] \big\} \right| 
\rightarrow 0, \quad \text { as } \ t \rightarrow 0^+,
\end{equation}
which implies that $I_2'=L_2$.
Indeed from \eqref{gCstima}, one has
\[
\begin{aligned}
&\left| \frac{1}{t} \big\{ G(u+tv)-G(v)-t g(u) \bar{v} \big\} \right| \\
&\le \sup_{t \in[0,1]} \big\{ |g(u+t v)|+|g(u)| \big\} |v|  \\
&\le C\left(|u|+|v|+|u|^{p-1}+|v|^{p-1}\right)|v| \\
&\le C \Big\{ | \phi_\l | + |\psi_\l | +(|q(u)|+|q(v)|) \gl \\
&\qquad + | \phi_\l |^{p-1} + | \psi_\l |^{p-1} + ( |q(u)|^{p-1} + |q(v)|^{p-1}) \gl^{p-1}
\Big\} ( |\psi_\l| + |q(v)|\gl) \\
&=: h_\l \quad \text{a.e.} \ x \in \R^N.
\end{aligned}
\]
Since $2<p<3$, if $N=3$, and $p>2$, if $N=2$,
it follows by Proposition \ref{prop:2.1}-\eqref{2.1-2} that $h_\l \in L^1(\R^N)$.
Moreover we have
\[
\frac{1}{t} \big\{ G(u+tv)-G(u)-t g(u) \bar{v} \big\} \rightarrow 0
\quad \text { a.e. } \ x \in \mathbb{R}^N \text { as } \ t \rightarrow 0.
\]
Thus by the Lebesgue dominated convergence theorem,
\ef{eq:2.12} follows.

Finally we prove that if $u_n \to u_0$ in $\Ha$, then
\begin{equation} \label{eq:2.13}
\sup_{v \in \Ha, \,\| v \|_{\Hal} \le 1}
\left| \big\{ I_2'(u_n) - I_2'(u_0) \big\} [v] \right| \to 0 \quad \text{as} \ n \to +\infty, 
\end{equation}  
from which we deduce that $I_2'$ is continuous.
Putting 
\[
u_n=\phi_{n,\l}+q(u_n) \gl \quad \text { and } \quad
u_0=\phi_{0,\lambda}+q(u_0) \gl,
\]
it holds that 
\[
\phi_{n,\lambda} \rightarrow \phi_{0, \lambda} \quad \text{in} \ H^1(\R^N)
\quad \text{and} \quad q(u_n) \rightarrow q(u_0).
\]
Especially one gets $\phi_{n,\l} \to \phi_{0,\l}$ in $L^2(\RN) \cap L^p(\R^N)$ 
and, up to a subsequence,
\begin{equation} \label{eq:2.14}
\left| \phi_{n,\l} \right| \le \Phi \ \text { a.e. in } \ \mathbb{R}^N, \ 
|q(u_n)| \le M \ \text { for all} \ n \in \N
\end{equation}
for some $\Phi \in L^2(\RN) \cap L^p(\R^N)$ and $M>0$.
Hereafter we write $\phi_{n,\l}=\phi_n$ and $\phi_{0,\l}=\phi_0$ for simplicity.
For any $R>0$, 
by using \eqref{gCstima} and \ef{eq:2.14}, we find that
\[
\begin{aligned}
&\left| \int_{\{|x| \ge R\}} \big\{ g(u_n)-g(u_0) \big\} \bar{v} \,d x \right| \\
&\le C \int_{\{|x| \ge R\}}
\left( |u_n|+|u_n|^{p-1}+|u_0|+|u_0|^{p-1} \right) |v| \,d x \\
&\le C \int_{\{|x| \ge R\}}
\left( |\phi_n |+ |q(u_n)| \gl +|u_0 |
+|\phi_n|^{p-1}+| q(u_n) |^{p-1} \gl^{p-1} + |u_0 |^{p-1} \right)
|v| \,dx \\
&\le C \left( \|\phi_n \|_{L^2(\{|x| \ge R\})} +|q(u_n)| \| \gl \|_{L^2(\{|x| \ge R\})}
+ \| u_0 \|_{L^2(\{|x| \ge R\})} \right) \| v \|_{L^2(\{|x| \ge R\})} \\
&\quad +C \left( \| \phi_n \|_{L^p(\{|x| \ge R\})}^{p-1}
+|q(u_n)|^{p-1} \| \gl \|_{L^p(\{|x| \ge R\})}^{p-1} 
+ \| u_0 \|_{L^p(\{|x| \ge R\})}^{p-1} \right) \| v \|_{L^p(\{|x| \ge R\})} \\
&\le C \left( \| \Phi \|_{L^2(\{|x| \ge R\})}
+ M \| \gl \|_{L^2(\{|x| \ge R\})} + \| u_0 \|_{L^2(\{|x| \ge R\})} \right) 
\| v \|_{\Hal} \\
&\quad + C \left(
\| \Phi \|_{L^p(\{|x| \ge R\})}^{p-1} + M^{p-1} \| \gl \|_{L^p(\{|x| \ge R\})}^{p-1}
+\| u_0 \|_{L^p(\{|x| \ge R\})}^{p-1} \right) \| v \|_{\Hal}.
\end{aligned}
\]
Thus for any $\eps >0$, there exists $R_{\eps}>0$ such that
\begin{equation} \label{eq:2.15}
\sup_{v \in \Ha, \, \| v \|_{\Hal} \le 1}
\left| \int_{\{|x| \ge R_{\eps}\}} \big\{ g(u_n) -g(u_0) \big\} \bar{v} \,dx\right| 
\le \varepsilon.
\end{equation}
Next we show that 
\begin{equation} \label{eq:2.16}
\sup_{v \in \Ha, \, \| v \|_{\Hal} \le 1}
\left| \int_{\{|x| \le R_{\eps}\}} \big\{ g(u_n) -g(u_0) \big\} \bar{v} \,dx\right| 
\to 0, \quad \text{as} \ n \to +\infty.
\end{equation}
First let us consider the case $N=3$.
Since $2<p<3$, we can take $q_0 \in \left( \frac{3}{2}, 2 \right]$ so that
\begin{equation} \label{eq:2.17}
1 \le q_0(p-1) <3 \quad \text{and} \quad q_0' \in [2,3),
\end{equation}
where $q_0'$ is the H\"older conjugate of $q_0$.
From \ef{eq:2.17}, it follows that
\begin{equation} \label{eq:2.18}
\|v\|_{q_0^{\prime}} \le C\|v\|_{\Hal} \quad \text { for all } \ v \in \Ha,
\end{equation}
\begin{equation} \label{eq:2.19}
\gl \in L^{q_0}(\mathbb{R}^3), \quad 
| \gl |^{p-1} \in L^{q_0}(\mathbb{R}^3),
\end{equation}
\begin{equation} \label{eq:2.20}
\phi_n \rightarrow \phi_0 \ \text{in} \ L_ {\rm loc}^{q_0}(\mathbb{R}^3) 
\quad \text {and} \quad 
|\phi_n|^{p-1} \rightarrow \left|\phi_0\right|^{p-1} \ \text{in} \ 
L_ {\rm loc}^{q_0}(\mathbb{R}^3).
\end{equation}
Moreover by \ef{eq:2.20}, we may assume that
\begin{equation} \label{eq:2.21}
\left|\phi_n\right| \le \Phi \quad \text { a.e. } \ x \in B_{R_{\varepsilon}}(0) \ 
\text{and} \ n \in \mathbb{N}
\end{equation}
for some $\Phi \in L^{q_0}_ {\rm loc}(\RT) \cap L^{(p-1)q_0}_ {\rm loc}(\R^3)$.
Then from \eqref{gCstima}, \ef{eq:2.19} and \ef{eq:2.21}, one has
\[
\begin{aligned}
\left| g(u_n)-g(u_0)\right|^{q_0} 
&\le C\left( |u_n|^{q_0}+|u_0|^{q_0}
+|u_n|^{(p-1) q_0}+|u_0|^{(p-1) q_0} \right) \\
&\le C\Big( |\phi_n|^{q_0}+|q(u_n)|^{q_0} \gl^{q_0}+|u_0|^{q_0} \\ 
&\qquad +|\phi_n|^{(p-1) q_0}+|q(u_n)|^{(p-1) q_0} \gl^{(p-1) q_0}
+|u_0|^{(p-1)q_0}\Big) \\
&\le C \Big( \Phi^{q_0}+M^{q_0} \gl^{q_0}+ |u_0|^{q_0} \\ 
&\qquad +\Phi^{(p-1) q_0}+M^{(p-1) q_0} \gl^{(p-1) q_0}
+|u_0 |^{(p-1)q_0} \Big) \in L^1_ {\rm loc}(\R^3),
\end{aligned}
\]
from which one finds that $g(u_n) \to g(u_0)$ in $L^{q_0}(\{|x| \le R_\eps\})$.
Thus from \ef{eq:2.18}, we obtain
\[
\begin{aligned}
&\sup_{ v \in \Ha, \, \| v \|_{\Hal} \le 1}
\left| \int_{\{|x| \le R_\eps\}} \big\{ g(u_n)-g(u_0) \big\} \bar{v} \,dx \right| \\
&\qquad \le \| g(u_n) -g(u_0) \|_{L^{q_0}(\{|x| \le R_{\eps}\})} 
\| v \|_{L^{q_0'}(\R^3)} \\
&\qquad \le C \| g(u_n)-g(u_0)\|_{L^{q_0}(\{|x| \le R_{\varepsilon})\}} 
\rightarrow 0 \ \text { as } \ n \rightarrow \infty,
\end{aligned}
\]
which shows that \ef{eq:2.16} holds.
In the case $N=2$, we have only to choose $q_0=2$.
From \ef{eq:2.15} and \ef{eq:2.16}, 
we arrive at \ef{eq:2.13}, which completes the proof.
\end{proof}

Now we analyse the relations between solutions of \eqref{eq2} and \eqref{eq}, 
boundary condition \eqref{bdry} and critical points of $I$.

\begin{proposition} \label{prop:2.8}
If $u$ is a solution of the original problem \ef{eq2}, 
then $u$ is a critical point of $I$.

On the other hand, if $u \in \Ha$ is a critical point of $I$,
then $u$ is a weak solution of \ef{eq}. 
If, in addition, $u=\phi_{\l}+q(u)\gl$ with $\phi_{\l}\in H^1(\RN)\cap C(\RN)$, 
then $u$ satisfies also the boundary condition \eqref{bdry}, up to a phase shift. 
Finally, if $u=\phi_{\l}+q(u)\gl$ with $\phi_{\l}\in H^2(\RN)$, 
then $u$ is a solution of \eqref{eq2}.
\end{proposition}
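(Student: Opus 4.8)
The proof splits along the four implications, the recurring tool being the explicit expression \eqref{derivata} for $I'$ together with the identity $\langle(-\Delta+\l)\phi_\l,\gl\rangle=\phi_\l(0)$, valid whenever $\phi_\l$ is continuous at the origin and the left-hand pairing makes sense.

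First, if $u=\phi_\l+q(u)\gl$ solves \eqref{eq2}, then $u\in D(-\Delta_\a)$, so $\phi_\l\in H^2(\RN)\subset C(\RN)$ (here $N\le3$), the boundary condition $\phi_\l(0)=(\a+\xi_\l)q(u)$ holds by definition of the domain, and the equation rewrites as $(-\Delta+\l)\phi_\l-\l u=g(u)$ in $L^2(\RN)$. I would pair this $L^2$-identity with an arbitrary $v=\psi_\l+q(v)\gl\in\Ha$: an integration by parts on the regular part (licit since $\phi_\l\in H^2$, $\psi_\l\in H^1$) produces $\langle\nabla\phi_\l,\nabla\psi_\l\rangle+\l\langle\phi_\l,\psi_\l\rangle$, while on the singular part $\langle(-\Delta+\l)\phi_\l,\gl\rangle=\phi_\l(0)=(\a+\xi_\l)q(u)$, the pairing being justified by approximating $\phi_\l$ in $H^2$ by smooth functions and using $(-\Delta+\l)\gl=\delta_0$ and $\gl\in L^2(\RN)$. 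Comparing the resulting identity with \eqref{derivata} yields $I'(u)[v]=0$.

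Conversely, the fact that a critical point of $I$ is a weak solution of \eqref{eq} is just a rereading of Proposition \ref{prop:2.6}: by \eqref{derivata}, the condition $I'(u)[v]=0$ for all $v\in\Ha$ is exactly the weak formulation (testing against $iv$ as well recovers the imaginary part). Assume in addition $\phi_\l\in H^1(\RN)\cap C(\RN)$. Testing the weak formulation against $\psi_\l\in H^1(\RN)\subset\Ha$ (so $q(\psi_\l)=0$) gives $-\Delta\phi_\l=g(u)+\l q(u)\gl$ in $\mathcal{D}'(\RN)$, hence $(-\Delta+\l)\phi_\l=g(u)+\l u$; by \eqref{gCstima} and Proposition \ref{prop:2.1} the right-hand side is locally integrable and square-integrable away from the origin, so one still obtains $\langle g(u)+\l u,\gl\rangle=\langle(-\Delta+\l)\phi_\l,\gl\rangle=\phi_\l(0)$. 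Testing the weak formulation against $v=\gl\in\Ha$ and against $i\gl$ produces $(\a+\xi_\l)q(u)=\langle g(u)+\l u,\gl\rangle$, and combining the two displays gives \eqref{bdry}. Finally, if moreover $\phi_\l\in H^2(\RN)$, then $\phi_\l\in C(\RN)$ and the previous step applies, so \eqref{bdry} holds; moreover $(-\Delta+\l)\phi_\l\in L^2(\RN)$ equals $g(u)+\l u$ and $u\in L^2(\RN)$, whence $g(u)\in L^2(\RN)$ and $-\Delta\phi_\l-\l q(u)\gl=g(u)$ holds in $L^2(\RN)$; together with \eqref{bdry} this says exactly that $u\in D(-\Delta_\a)$ and $-\Delta_\a u=g(u)$, i.e. $u$ solves \eqref{eq2}.

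The one genuinely delicate point is handling the singular function $\gl\notin H^1(\RN)$ in the pairings above, i.e. establishing $\langle(-\Delta+\l)\phi_\l,\gl\rangle=\phi_\l(0)$ with only the integrability coming from \eqref{gCstima}. When $N=3$ and $\frac{5}{2}\le p<3$ one has $g(u)\notin L^2(\RN)$ in general, so this identity cannot be read off from an $L^2$-pairing; one must instead rely on the local $W^{2,q_0}$-regularity of $\phi_\l$ with $q_0>N/2$ (as in the proof of Proposition \ref{prop:5.1}), on $\gl\in L^{q_0'}_{\rm loc}$ with $q_0'<3$, and on an approximation argument for the compactly supported part of $\gl$, the far part being controlled by the exponential decay of $\gl$ together with $g(u)+\l u\in L^2$ near infinity. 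Everything else is routine bookkeeping.
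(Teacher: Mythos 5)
Your proof is correct and follows essentially the same strategy as the paper: integrate by parts against a general test $v=\psi_\l+q(v)\gl$ using \eqref{derivata}, recognize that the pairing of $(-\Delta+\l)\phi_\l$ with $\gl$ produces $\phi_\l(0)$, and then read off the four implications by choosing $v=\psi_\l$ and $v=\gl$ in turn. Two points where you are in fact more careful than the paper's own proof are worth noting. First, by testing against \emph{both} $\gl$ and $i\gl$ you recover the full complex identity $(\a+\xi_\l)q(u)=\langle g(u)+\l u,\gl\rangle=\phi_\l(0)$, whereas the paper tests only against $\gl$, obtains only $\re\,\phi_\l(0)=\re\{(\a+\xi_\l)q(u)\}$, and therefore has to append the hedge ``up to a phase shift''; your observation shows this hedge is actually unnecessary, since $I'$ vanishes on the whole real span $\{\gl,i\gl\}$. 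Second, the paper's written proof passes straight from the weak-solution conclusion to the case $\phi_\l\in H^2(\RN)$ and never explicitly justifies the pairing $\langle(-\Delta+\l)\phi_\l,\gl\rangle=\phi_\l(0)$ in the intermediate regime $\phi_\l\in H^1(\RN)\cap C(\RN)$ (which is precisely the regularity available for $N=3$ and $\tfrac52\le p<3$); you flag this as the genuinely delicate step and sketch the right repair, namely mollification together with the local $W^{2,q_0}$-regularity from Proposition \ref{prop:5.1}, the integrability of $g(u)+\l u$ against $\gl$ near the origin for $p<3$, and the exponential decay of $\gl$ at infinity. If you wanted to make that last point fully rigorous you would still need to verify a dominated-convergence bound on the mollified Green function near the origin (the singularity of $\gl*\rho_\eps$ there is of the same order $\eps^{-(N-2)}$ as $\gl$ itself, and is compensated by the integral of $|g(u)+\l u|$ over $B_{2\eps}$ being $O(\eps^{4-p})$ when $N=3$), but the outline you give is sound.
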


\begin{proof}
Although this fact has been shown in \cite{ABCT2}, 
we give the proof for the sake of completeness. 
Let  $u= \phi_\l + q(u) \gl \in D(-\Delta_\a)$ be a solution of \eqref{eq2}. 
Then, for any  $v= \psi_\l + q(v) \gl \in \Ha$, we have 
\[
\langle -\Delta \phi_\l - \l q(u) \gl -g(u), \psi_\l +q(v) \gl \rangle=0.
\]
In addition, by the definition of $\gl$, it follows that
\[
 \langle - \Delta \phi_\l+ \l \phi_\l, \gl \rangle
=\phi_{\lambda}(0)=(\alpha + \xi_\l ) q(u).
\]
Summing up and using \eqref{derivata}, we deduce that $I'(u)[v]=0$.

On the other hand, suppose that $I'(u)=0$.
Taking
$v= \psi_\l$ so that $q(v)=0$, we have
\[
0=I'(u)[\psi_\l]= \re \left\{ \langle \nabla \phi_\l, \nabla \psi_\l \rangle 
-\l q(u) \langle \gl, \psi_\l \rangle - \irn g(u) \overline{\psi_\l} \,dx \right\} 
\quad \text{for all} \ \psi_\l \in H^1(\RN), 
\]
from which we deduce that $\phi_\l$ is a weak solution of 
\begin{equation*} 
-\Delta \phi_\l - \l q(u) \gl = g(u) \quad \text{in} \ \R^N.
\end{equation*}

Suppose now that $u=\phi_{\l}+q(u)\gl$ is a critical point of $I$ 
with $\phi_{\l}\in H^2(\RN)$. 
Choosing $v=\gl$ so that $\psi_\l \equiv 0$ and $q(v)=1$, it follows that
\[
\re \left\{ -\l \langle \phi_\l + q(u) \gl, \gl \rangle
+(\alpha + \xi_\l) q(u) - \irn g(u) \gl \,dx \right\} 
=0.
\]
Using \ef{eq}, one finds that
\[
\re \phi_\l (0)
= \re \langle -\Delta \phi_\l + \l \phi_\l, \gl \rangle 
= \re \{ (\alpha + \xi_\l) q(u)\}.
\]
Hence, up to a phase shift, $u$ satisfies \ef{eq2}.
This completes the proof.
\end{proof}

\section{Existence of a nontrivial solution}\label{se:ex}

In this section, we establish the existence of a nontrivial solution of \ef{eq}
by applying the mountain pass theorem.

For this purpose, we set
\begin{equation*}
\Har := \{u\in \Ha:u \text{ radially symmetric}\}.
\end{equation*} 
Moreover, as in \cite{AP, BL, HIT, PW1, PW2}, 
we introduce an auxiliary nonlinear term as follows.
Let us fix $\omega_1\in (\omega_{\alpha},\omega)$,
where $\omega_\a $ and $\omega$ are respectively defined in 
\eqref{omega-al} and \eqref{g2},
and define 
\begin{equation} \label{hdef}
h(s):=
\max\{\omega_1 s+g(s),0\} \ \text{for} \ s\ge 0.
\end{equation}
We also extend $h$ to the complex plane similarly as $g$.
Then from \ef{g2}, we see that $h(s) \equiv 0$, for $|s| \sim 0$.
Thus by \ef{g3}, it holds that
\begin{equation} \label{eq:3.7}
\lim_{s \to 0} \frac{h(s)}{s} =0 \quad \text{and} \quad
\lim_{|s| \to +\infty} \frac{h(s)}{|s|^{p-1}}=0
\quad \text{for some }
\begin{cases}
2<p<3 &(N=3), \\
2<p<+\infty &(N=2).
\end{cases}
\end{equation}
Note that $p$ in \ef{eq:3.7} may be different to that of \ef{g3}.
From \ef{eq:3.7}, we also deduce that
for any $\eps>0$, there exists $C_{\eps}>0$ such that 
\begin{align}
h(s) &\le \eps s + C_{\eps} s^{p-1}, \quad \text{for} \ s \ge 0, \label{h<}.
\end{align}
Moreover from \ef{hdef}, it follows that 
\begin{align*}
g(s) &\le - \omega_1 s + h(s)\le -(\omega_1-\eps)s
+C_\eps s^{p-1},\quad \text{for} \ s \ge 0, 
\\
G(s) &\le - \frac{(\omega_1-\eps)}2 s^2+\frac{C_\eps}p s^{p}, 
\quad \text{for} \ s \ge 0. 
\end{align*}
Thus 
by definition of the extension to the complex plane of $g$ and $G$, 
we find that\begin{align}
g(u) \bar{u} = g(|u|)|u| 
&\le- \omega_1 |u|^2 + h(u) \bar{u}
\le -( \omega_1 - \eps) |u|^2 + C_{\eps} |u|^{p}, \quad \text{for} \ u \in \C, 
\label{g-est}\\
G(u) = G(|u|) &\le - \frac{\omega_1 - \eps}{2} |u|^2 + \frac{C_{\eps}}{p} |u|^p, 
\quad \text{for} \ u \in \C. \label{G<}
\end{align}

First we begin with the following.

\begin{lemma} \label{lem:MP}
Assume \eqref{g1}-\eqref{g4}.
Then the functional $I: \Ha \to \R$ has the mountain pass geometry, i.e.
\begin{enumerate}[label=(\roman{*}),ref=\roman{*}]
\setcounter{enumi}{0}
\item\label{MP1} there exist $\delta_0$, $\rho>0$ such that
$I(u) \ge \delta_0$ for $\|u\|_{\Hal}=\rho$;
\item\label{MP2} there exists $z \in \Har $ with $\|z \|_{\Hal}>\rho$ 
such that $I(z)<0$.
\end{enumerate}

\end{lemma}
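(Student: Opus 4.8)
The plan is to verify the two conditions of the mountain pass geometry separately, using that the value of $I$ — and hence the geometry — is independent of the choice of $\lambda$, so that we may fix a convenient one; recall also that all the norms $\|\cdot\|_{\Hal}$ with $\lambda>\omega_\alpha$ are mutually equivalent by \eqref{equiv}.

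For \eqref{MP1} I would fix $\lambda\in(\omega_\alpha,\omega_1)$ and then choose $\eps\in(0,\omega_1-\lambda)$. Writing $u=\phi_\lambda+q(u)\gl\in\Ha$ and starting from
\[
I(u)=\frac12\|u\|_{\Hal}^2-\frac{\lambda}{2}\|u\|_2^2-\irn G(u)\,dx ,
\]
I would insert the pointwise bound \eqref{G<}, which after integration gives $-\irn G(u)\,dx\ge\frac{\omega_1-\eps}{2}\|u\|_2^2-\frac{C_\eps}{p}\|u\|_p^p$; the two $L^2$-terms then combine into $\frac{\omega_1-\eps-\lambda}{2}\|u\|_2^2\ge0$, which can be discarded, leaving
\[
I(u)\ge\frac12\|u\|_{\Hal}^2-\frac{C_\eps}{p}\|u\|_p^p .
\]
The remaining ingredient is the continuous embedding $\|u\|_p\le C\|u\|_{\Hal}$: indeed $\|\phi_\lambda\|_p\le C\|\phi_\lambda\|_{H^1}\le C\|u\|_{\Hal}$ by the Sobolev embedding (the exponent $p$ in \eqref{G<} is subcritical, being $<3$ when $N=3$ and $<\infty$ when $N=2$), $\gl\in L^p(\RN)$ by Proposition~\ref{prop:2.1}-\eqref{2.1-2}, and $|q(u)|\le(\alpha+\xi_\lambda)^{-1/2}\|u\|_{\Hal}$ since $\alpha+\xi_\lambda>0$, so that $\|u\|_p\le\|\phi_\lambda\|_p+|q(u)|\,\|\gl\|_p\le C\|u\|_{\Hal}$. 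Hence $I(u)\ge\frac12\rho^2-C\rho^p$ whenever $\|u\|_{\Hal}=\rho$, and since $p>2$ it suffices to take $\rho>0$ small to get $I(u)\ge\frac14\rho^2=:\delta_0>0$ on that sphere.

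For \eqref{MP2} I would follow the classical Berestycki--Lions scheme: exhibit a radial test function with positive potential energy and rescale it. By \eqref{g4} and the continuity of $G$ from \eqref{g1}, let $w$ be the radial Lipschitz function with compact support equal to $\zeta$ on a ball $B_R$, affine from $\zeta$ to $0$ on $B_{R+1}\setminus B_R$, and $0$ outside; then
\[
\irn G(w)\,dx\ge|B_R|\,G(\zeta)-\|G\|_{L^\infty([0,\zeta])}\,\bigl|B_{R+1}\setminus B_R\bigr|>0
\]
once $R$ is large, since $|B_R|$ grows like $R^N$ while $|B_{R+1}\setminus B_R|$ grows like $R^{N-1}$ and $G(\zeta)>0$. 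Such a $w$ lies in $\Hr\subset\Har$ with charge $q(w)=0$, so for $t>0$ the rescaling $w_t:=w(\cdot/t)$ again belongs to $\Har$ with $q(w_t)=t^{N-2}q(w)=0$ by Lemma~\ref{lem:2.5}-\eqref{2.5-2}; consequently
\[
I(w_t)=\frac{t^{N-2}}{2}\|\nabla w\|_2^2-t^N\irn G(w)\,dx\longrightarrow-\infty\qquad\text{as }t\to+\infty
\]
(for $N=2$ the gradient term is simply constant), while $\|w_t\|_{\Hal}^2=t^{N-2}\|\nabla w\|_2^2+\lambda t^N\|w\|_2^2\to+\infty$. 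Picking $t$ large enough that both $\|w_t\|_{\Hal}>\rho$ and $I(w_t)<0$, and setting $z:=w_t$, yields \eqref{MP2}.

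The only genuinely delicate point is the indefinite term $-\frac{\lambda}{2}\|u\|_2^2$ in $I$ for part \eqref{MP1}: its sign is absorbed by the quadratic-at-the-origin decay of $G$ encoded in \eqref{g2} (equivalently in \eqref{G<}), which is exactly why one works with a parameter $\lambda$ strictly below $\omega_1<\omega$; this is harmless since neither $I$ nor the mountain pass geometry depends on the particular $\lambda$. The construction underlying \eqref{MP2} is standard, the charge vanishing throughout, so it poses no real difficulty.
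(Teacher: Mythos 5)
Your proof is correct and follows essentially the same route as the paper: for \eqref{MP1} you fix $\lambda\in(\omega_\alpha,\omega_1)$, use the bound \eqref{G<} to absorb the indefinite $-\frac{\lambda}{2}\|u\|_2^2$ term, and conclude by the embedding $\|u\|_p\lesssim\|u\|_{\Hal}$; for \eqref{MP2} you take a radial $w\in\Hr$ with $\int G(w)>0$ (so $q(w)=0$) and scale, exactly as the paper does. The only difference is that you spell out the embedding $\|u\|_p\le C\|u\|_{\Hal}$ and the Berestycki--Lions construction of $w$, which the paper leaves implicit.
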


\begin{proof}
\eqref{MP1}. \ 
Let $\l\in(\omega_\a, \omega_1)$
and $\eps \in (0, \omega_1 -\l)$, 
where $\omega_\a $ and $\omega_1$ are respectively defined in 
\eqref{omega-al} and \eqref{hdef}.
From \eqref{G<},
for any $u\in \Har$, we have
\begin{align*}
I(u) & \ge  \frac 12\|\n \phi_\l\|_2^2+\frac \l 2\|\phi_\l \|_2^2
+\frac{\omega_1 -\l -\eps}2\|u\|_2^2
+\frac 12(\a +\xi_\l)|q(u)|^2
 - \frac{C_\eps}{p} \|u\|_p^{p}.
\end{align*}
Then, by the Sobolev inequality, 
there exist $\delta_0$ and $\rho>0$ such that 
$I(u) \ge \delta_0$ for $\| u \|_{\Hal}= \rho$.

\eqref{MP2}. \
First we observe that when $u\in \Hr$, the set of radial functions of $\H$, 
it holds that $q(u)=0$ and 
\begin{equation*}
I(u)= \frac 12\|\n u\|_2^2
- \irn G(u) \,dx.
\end{equation*}
Then from \ef{g4} and following \cite{BL}, there exists $w\in \Hr$  such that 
\[
\irn G(w)\,dx>0.
\]
For any $t>0$, we set $w_t:=w(\cdot/t)$. Since
\[
I(w_t)=\frac{t^{N-2}}{2}\|\n w\|_2^2
-t^N\irn G(w)\,dx,
\]
for $t$ sufficiently large, we have that $I(w_t)<0$ 
with $\|w_t\|_{\Hal}=\|w_t\|_{H^1}>\rho$. 
This finishes the proof.
\end{proof}

By Lemma \ref{lem:MP}, denoting
\begin{equation*}
\Gamma := \left\{ \g \in C\big([0,1],\Har\big) \ : \ \g(0)=0,
I(\g(1))< 0\right\},
\end{equation*}
we infer that $\G$ is non-empty and
\begin{equation} \label{MP}
\sigma:=\inf_{\g \in \G }\max_{t\in[0,1]}
I(\g(t)) \ge \delta_0>0.
\end{equation}

Now, inspired by \cite{CT, HIT,jj}, 
we define the  functional $J:\R\times \Har\to \R$  as 
\begin{align} \label{eq:3.1}
\begin{split}
J(\theta, u) &:=\frac{e^{(N-2)\theta}}2 \|\n \phi_\l\|_2^2
+\frac{e^{(N-2) \theta}\l}{2}\big(\| \phi_\l\|_2^2
-\| u\|_2^2\big)
+\frac{e^{2(N-2) \theta}}{2} (\a +\xi_{e^{-2\t}\l})|q(u)|^2 
\\
&\qquad-e^{N\theta}\irn G(u) \,dx, 
\end{split}
\end{align}
for $u=\phi_\l+q(u) \cg_\l\in \Har$.
It is important to point out that $J(\theta, u) = I\big(u(e^{-\theta}\cdot )\big)$
as observed in Remark \ref{rem:2.6}.
Moreover by computing $\partial_{\theta}J(0,u)=0$,
we obtain the Pohozaev identity \ef{eq:5.6} formally.

With similar arguments of Lemma \ref{lem:MP}, 
$J$ also has the mountain pass geometry 
and we can define its mountain pass level as
\[
\tilde \sigma:=\inf_{(\theta,\g) \in \Sigma\times \G} 
\max_{t\in [0,1]}J\big(\theta(t),\g(t)\big), 
\]
where 
\[
\Sigma:=\left\{\theta \in C\big([0,1],\R\big) \ : \ \theta(0)=\theta(1)=0\right\}.
\]
Arguing as in \cite[Lemma 4.1]{HIT}, we derive the following.

\begin{lemma} \label{lem:MPJ}
The mountain pass levels of $I$ and $J$ coincide, namely $\s=\tilde \s$.
\end{lemma}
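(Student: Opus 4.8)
The plan is to prove the two inequalities $\sigma \le \tilde\sigma$ and $\tilde\sigma \le \sigma$ separately, exploiting the identity $J(\theta,u) = I\big(u(e^{-\theta}\,\cdot\,)\big)$ noted right after the definition of $J$.

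First I would show $\tilde\sigma \le \sigma$. Given any $\gamma \in \Gamma$, the pair $(\theta_0,\gamma)$ with $\theta_0 \equiv 0 \in \Sigma$ is admissible for the minimax defining $\tilde\sigma$, and since $J(0,u) = I(u)$ we get
\[
\max_{t\in[0,1]} J\big(\theta_0(t),\gamma(t)\big) = \max_{t\in[0,1]} I(\gamma(t)).
\]
Taking the infimum over $\gamma \in \Gamma$ yields $\tilde\sigma \le \sigma$. One only needs to check that $(\theta_0,\gamma)$ genuinely lies in $\Sigma\times\Gamma$, i.e.\ that $\gamma(0)=0$ and $J(\theta_0(1),\gamma(1)) = I(\gamma(1)) < 0$, which is immediate from $\gamma \in \Gamma$.

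Next, the reverse inequality $\sigma \le \tilde\sigma$. Fix any $(\theta,\gamma) \in \Sigma\times\Gamma$ and define $\tilde\gamma(t) := \gamma(t)\big(e^{-\theta(t)}\,\cdot\,\big)$. Since the scaling $u \mapsto u(e^{-\theta}\,\cdot\,)$ maps $\Har$ continuously into itself (using Lemma \ref{lem:2.5} for the behaviour of the charge and the fact that radial symmetry is preserved), $\tilde\gamma \in C([0,1],\Har)$. At the endpoints, $\theta(0)=\theta(1)=0$ forces $\tilde\gamma(0) = \gamma(0) = 0$ and $\tilde\gamma(1) = \gamma(1)$, so $I(\tilde\gamma(1)) = I(\gamma(1)) < 0$ and hence $\tilde\gamma \in \Gamma$. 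Moreover, by the identity $J(\theta,u) = I(u(e^{-\theta}\,\cdot\,))$,
\[
I(\tilde\gamma(t)) = I\big(\gamma(t)(e^{-\theta(t)}\,\cdot\,)\big) = J\big(\theta(t),\gamma(t)\big),
\]
so $\max_t I(\tilde\gamma(t)) = \max_t J(\theta(t),\gamma(t))$. Therefore $\sigma \le \max_t J(\theta(t),\gamma(t))$, and taking the infimum over $(\theta,\gamma)$ gives $\sigma \le \tilde\sigma$. Combining the two inequalities yields $\sigma = \tilde\sigma$.

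The only genuinely delicate point — and the main obstacle — is verifying that the scaled path $\tilde\gamma$ (and, in the first part, that the constant-$\theta$ lift) stays in the \emph{same} function class with continuous dependence on $t$: one must confirm that $t \mapsto \gamma(t)(e^{-\theta(t)}\,\cdot\,)$ is continuous in the $\Hal$-norm, which requires controlling how the decomposition $u = \phi_\l + q(u)\gl$ transforms under the dilation, including the shift of the parameter $\l \to e^{-2\theta}\l$ and the factor $t^{N-2}$ on the charge from Lemma \ref{lem:2.5}\eqref{2.5-2}; here the norm-equivalence \eqref{equiv} is what guarantees this causes no trouble on compact $t$-intervals. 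This is exactly the computation carried out in \cite[Lemma 4.1]{HIT}, so I would follow that argument.
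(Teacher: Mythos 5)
Your proposal is correct and follows essentially the same route as the paper: the paper's own ``proof'' is simply the citation ``Arguing as in \cite[Lemma 4.1]{HIT},'' and what you wrote out is precisely that argument, using the constant lift $\theta\equiv 0$ for $\tilde\sigma\le\sigma$ and the rescaled path $\tilde\gamma(t)=\gamma(t)(e^{-\theta(t)}\,\cdot\,)$ together with the identity $J(\theta,u)=I(u(e^{-\theta}\,\cdot\,))$ for $\sigma\le\tilde\sigma$. You also correctly flag the one point that is genuinely specific to this setting and not literally in \cite{HIT}: the continuity of $t\mapsto\gamma(t)(e^{-\theta(t)}\,\cdot\,)$ in $\Ha$ requires tracking the change of charge (Lemma~\ref{lem:2.5}) and the shift $\l\mapsto e^{-2\theta}\l$ in the decomposition, which is handled via Proposition~\ref{prop:2.1} and the equivalence of norms~\eqref{equiv} on the compact range of $\theta$.
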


Now, as a immediate consequence of Ekeland's variational principle, 
we have the result below, 
whose proof can be found in \cite{CT}, \cite[Lemma 2.3]{jj}.

\begin{lemma} \label{lem:sqrte}
Let $\eps>0$. Suppose that $\eta \in \Sigma \times \G$ satisfies 
\[
\max_{t \in [0,1]}J( \eta(t))\le  \s+\eps.
\]
Then there exists $(\theta, u)\in \R\times \Har$ such that
\begin{enumerate}[label=(\roman{*}),ref=\roman{*}]
\setcounter{enumi}{0}
\item${\rm dist}_{\R \times \Har}\big((\theta,u),\eta([0,1])\big)
\le 2 \sqrt{\eps}$;
\item$J(\theta,u)\in [\s-\eps, \s+\eps]$;
\item $\|D J(\theta,u)\|_{\R \times (\Har)'}\le 2 \sqrt{\eps}$.
\end{enumerate}
\end{lemma}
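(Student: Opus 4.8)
This will be a by-now-standard consequence of Ekeland's variational principle on a space of paths, combined with a quantitative deformation argument; I only outline the structure, the details being as in \cite{CT} and \cite[Lemma 2.3]{jj}. The plan is to work on $X:=\Sigma\times\G$, equipped with the distance
\[
d\big((\theta_1,\g_1),(\theta_2,\g_2)\big):=\|\theta_1-\theta_2\|_{C([0,1])}+\|\g_1-\g_2\|_{C([0,1],\Har)},
\]
and to apply Ekeland's principle to the functional $\Psi:X\to\R$, $\Psi(\eta):=\max_{t\in[0,1]}J(\eta(t))$. Since $J(\theta,u)=I\big(u(e^{-\theta}\cdot)\big)$ and $I$ is of class $C^1$ on $\Ha$ by Proposition \ref{prop:2.6}, the map $\Psi$ is continuous on $X$, and by Lemma \ref{lem:MPJ} together with the definition of $\tilde\s$ we have $\inf_X\Psi=\tilde\s=\s$. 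One technical point, to be handled exactly as in the quoted references, is that $\G$ is not closed because the condition $I(\g(1))<0$ is open; this is circumvented by a standard modification of the set of admissible paths that does not affect the minimax level and makes the relevant metric space complete, so that Ekeland's principle applies.

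Given $\eta\in X$ with $\Psi(\eta)\le\s+\eps$, Ekeland's principle then produces $\bar\eta=(\bar\theta,\bar\g)\in X$ with $\Psi(\bar\eta)\le\Psi(\eta)$, $d(\bar\eta,\eta)\le\sqrt{\eps}$, and
\[
\Psi(\zeta)\ \ge\ \Psi(\bar\eta)-\sqrt{\eps}\,d(\zeta,\bar\eta)\qquad\text{for all }\zeta\in X;
\]
in particular $\s\le\Psi(\bar\eta)\le\s+\eps$. I would then look at the nonempty compact set $M:=\{t\in[0,1]:J(\bar\eta(t))=\Psi(\bar\eta)\}$ of maximum points of $J\circ\bar\eta$; since $\s>0$ by \eqref{MP} while $J(\bar\eta(0))=J(0,0)=0$ and $J(\bar\eta(1))=I(\bar\g(1))<0$, necessarily $M\subset(0,1)$. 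The key claim is that some $t^{*}\in M$ has $(\theta,u):=\bar\eta(t^{*})$ satisfying $\|DJ(\theta,u)\|_{\R\times(\Har)'}\le2\sqrt{\eps}$. Granting this, item (ii) follows from $\Psi(\bar\eta)\in[\s,\s+\eps]\subset[\s-\eps,\s+\eps]$, and item (i) from ${\rm dist}_{\R\times\Har}\big((\theta,u),\eta([0,1])\big)\le d(\bar\eta,\eta)\le\sqrt{\eps}\le2\sqrt{\eps}$.

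The existence of such a $t^{*}$ is the only substantial point, and I expect it to be the main obstacle; it is proved by contradiction through a deformation. If $\|DJ\|>2\sqrt{\eps}$ at every point of $M$, then by continuity of $DJ$ (again Proposition \ref{prop:2.6}) the same inequality holds on a neighbourhood $\mathcal{O}$ of $M$ in $\R\times\Har$; one constructs there a locally Lipschitz pseudo-gradient vector field, truncates it so that it vanishes outside $\mathcal{O}$ and near $t=0,1$ (which is possible precisely because $M\subset(0,1)$), and lets its flow act on $\bar\eta$ to obtain a competitor path $\zeta\in X$ along which $\Psi$ drops by strictly more than $\sqrt{\eps}\,d(\zeta,\bar\eta)$, contradicting the displayed Ekeland inequality. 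Checking that the deformed path still belongs to $X$ uses that the flow fixes the endpoints $t=0,1$, preserves radial symmetry, and keeps $I$ at the right endpoint below the relevant threshold for a small deformation parameter. The delicate part is thus to dovetail the pseudo-gradient construction with the constraints defining $\Sigma\times\G$; everything else is routine bookkeeping.
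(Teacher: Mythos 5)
Your approach matches the paper's: the paper does not prove this lemma itself but states it is ``an immediate consequence of Ekeland's variational principle'' and refers the reader to \cite{CT} and \cite[Lemma 2.3]{jj}, which are exactly the references and strategy you invoke. Your additional sketch (Ekeland on paths, then a pseudo-gradient deformation localized near the compact set of maxima in $(0,1)$) is the standard argument behind those cited lemmas and is consistent with them.
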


Arguing as in \cite{CT} or \cite[Proposition 4.2]{HIT} 
and using Lemmas \ref{lem:MPJ} and \ref{lem:sqrte}, 
the following proposition holds.

\begin{proposition} \label{prop:ps-seq}
There exists a sequence $\{(\theta_n,u_n)\} \subset \R \times \Har$ such that, 
as $n \to +\infty$, 
\begin{enumerate}[label=(\roman{*}),ref=\roman{*}]
\setcounter{enumi}{0}
\item\label{ps-i} $\theta_n \to 0$;
\item \label{ps-ii}$J(\theta_n,u_n)\to \s$;
\item\label{ps-iii} $\de_\theta J(\theta_n,u_n)\to 0$;
\item\label{ps-iv} $\de_u J(\theta_n,u_n)\to 0$ \ strongly in $(\Har)'$. 
\end{enumerate}

\end{proposition}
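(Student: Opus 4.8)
The plan is to obtain the sequence $\{(\theta_n,u_n)\}$ from Ekeland's variational principle in the quantitative form packaged in Lemma~\ref{lem:sqrte}, applied along a sequence of almost optimal paths which are chosen so as to have vanishing $\theta$-component. This last choice is exactly what will produce \eqref{ps-i}.

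Concretely, I would first fix a sequence $\eps_n \downarrow 0^+$. Since $\sigma = \inf_{\gamma\in\G}\max_{t\in[0,1]} I(\gamma(t))$ by \eqref{MP}, for each $n\in\N$ there is $\gamma_n \in \G$ with $\max_{t\in[0,1]} I(\gamma_n(t)) \le \sigma + \eps_n$. Using that $J(0,u) = I\big(u(e^{0}\cdot)\big) = I(u)$ (Remark~\ref{rem:2.6}), the pair $\eta_n := (\mathbf{0},\gamma_n) \in \Sigma\times\G$, where $\mathbf{0}$ denotes the constant path $\equiv 0$, satisfies $\max_{t\in[0,1]} J(\eta_n(t)) = \max_{t\in[0,1]} I(\gamma_n(t)) \le \sigma + \eps_n$; by Lemma~\ref{lem:MPJ} (i.e.\ $\sigma=\tilde\sigma$) these are also almost optimal for the min-max defining $J$. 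Hence Lemma~\ref{lem:sqrte} applies with $\eta=\eta_n$ and $\eps=\eps_n$ and yields $(\theta_n,u_n)\in\R\times\Har$ with $\dist_{\R\times\Har}\big((\theta_n,u_n),\eta_n([0,1])\big)\le 2\sqrt{\eps_n}$, with $J(\theta_n,u_n)\in[\sigma-\eps_n,\sigma+\eps_n]$, and with $\|DJ(\theta_n,u_n)\|_{\R\times(\Har)'}\le 2\sqrt{\eps_n}$.

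It then remains to read off the four assertions. Since $\eta_n([0,1])\subset\{0\}\times\Har$, the distance bound forces $|\theta_n|\le 2\sqrt{\eps_n}\to 0$, giving \eqref{ps-i}; this is precisely where the choice of paths with vanishing $\theta$-component is used. The energy bracket gives $J(\theta_n,u_n)\to\sigma$, i.e.\ \eqref{ps-ii}. Finally, splitting $DJ(\theta_n,u_n) = \big(\partial_\theta J(\theta_n,u_n),\partial_u J(\theta_n,u_n)\big)\in\R\times(\Har)'$, the bound $\|DJ(\theta_n,u_n)\|\to 0$ yields both $\partial_\theta J(\theta_n,u_n)\to 0$ and $\partial_u J(\theta_n,u_n)\to 0$ strongly in $(\Har)'$, which are \eqref{ps-iii} and \eqref{ps-iv}.

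As for the main obstacle: there is essentially none left at this stage, since all the substantial work---the $C^1$ regularity of $J$ on $\R\times\Har$, the mountain pass geometry of $J$, the identity $\sigma=\tilde\sigma$, and the Ekeland-type Lemma~\ref{lem:sqrte}---has already been established in the preceding lemmas, and the argument follows the standard scheme of \cite{CT,HIT,jj}. The only point requiring a little care is the bookkeeping that links \eqref{ps-i} to the construction, namely selecting the competitor paths inside $\{0\}\times\Har$ (legitimate thanks to $\sigma=\tilde\sigma$) so that the localization in Lemma~\ref{lem:sqrte} pins $\theta_n$ near the origin.
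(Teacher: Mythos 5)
Your proposal is correct and matches the paper's approach, which the authors delegate to the scheme of \cite{CT} and \cite[Proposition 4.2]{HIT}: take nearly optimal mountain pass paths $\gamma_n$ for $I$, lift them to $\eta_n = (\mathbf{0},\gamma_n)\in\Sigma\times\Gamma$ (legitimate since $J(0,\cdot)=I(\cdot)$ and $\sigma=\tilde\sigma$), apply the Ekeland-type Lemma~\ref{lem:sqrte}, and read off the four properties, with the distance bound to $\{0\}\times\Har$ forcing $\theta_n\to 0$. The bookkeeping you flag regarding the constant-zero $\theta$-path is indeed the only delicate point and you handle it correctly.
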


Our next purpose is to establish the boundedness in $\Ha$ of the sequence 
$\{ u_n \}$ found in the previous lemma.

\begin{lemma}\label{pr:bdd}
Suppose that $N=3$, $\a>0$ and assume \ef{g1}-\ef{g4}.
Let $\{(\theta_n,u_n)\} \subset \R \times \Har$ 
be the sequence in Proposition \ref{prop:ps-seq}.
Then $\{u_n\}$ is bounded in $\Ha$.
\end{lemma}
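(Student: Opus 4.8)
The plan is to exploit the three pieces of information available on the Palais--Smale--Pohozaev sequence: (ii) $J(\theta_n,u_n)\to\sigma$, (iii) $\partial_\theta J(\theta_n,u_n)\to 0$, and (iv) $\partial_u J(\theta_n,u_n)\to 0$, together with $\theta_n\to 0$. First I would write $u_n=\phi_{\l,n}+q(u_n)\gl$ for a \emph{fixed} $\l>\omega_\a$ and record the three resulting identities. Condition (iv), tested on $u_n$ itself, gives
\[
e^{(N-2)\theta_n}\|\n\phi_{\l,n}\|_2^2+e^{(N-2)\theta_n}\l(\|\phi_{\l,n}\|_2^2-\|u_n\|_2^2)+e^{2(N-2)\theta_n}(\a+\xi_{e^{-2\theta_n}\l})|q(u_n)|^2-e^{N\theta_n}\irn g(u_n)\bar u_n\,dx=o(1)\|u_n\|_{\Hal},
\]
condition (iii) gives the ``almost Pohozaev'' relation obtained by differentiating \eqref{eq:3.1} in $\theta$, and (ii) gives the value of $J$. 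Since $N=3$, these become manageable; crucially, because $\a>0$, for all $n$ large $\a+\xi_{e^{-2\theta_n}\l}$ stays bounded below by a positive constant (indeed $\xi_\l>0$ is possible to be negative but $\a>0$ and $\theta_n\to0$ keep $\a+\xi_{e^{-2\theta_n}\l}$ near $\a+\xi_\l>0$), so the charge term carries a favourable sign.

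The core of the argument is the standard Jeanjean-type combination: form $J(\theta_n,u_n)-\tfrac1N\partial_\theta J(\theta_n,u_n)$ (or the analogous linear combination that kills the $G$-term and the $q$-term's leading contribution), using $e^{\theta_n}\to1$, to produce an expression of the form
\[
c\,\|\n\phi_{\l,n}\|_2^2+c'\,\l(\|\phi_{\l,n}\|_2^2-\|u_n\|_2^2)+c''(\a+\xi_\l)|q(u_n)|^2+c'''\,\l\|\gl\|_2^2|q(u_n)|^2=\sigma+o(1)+o(1)\big(\text{controlled terms}\big),
\]
with all of $c,c',c'',c'''>0$ when $N=3$ and $\a>0$; this is exactly the point where the display in the introduction (the value of $I$ on the Pohozaev manifold with $N=3$) shows the signs work out, the term $\tfrac\l N\|\gl\|_2^2|q(u)|^2$ being positive and, by Proposition~\ref{prop:2.1}-\eqref{2.1-5}, equal to $\tfrac{\xi_\l}{2}|q(u)|^2$. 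Hence $\|\n\phi_{\l,n}\|_2^2$, $|q(u_n)|^2$, and $\l(\|\phi_{\l,n}\|_2^2-\|u_n\|_2^2)=\|\phi_{\l,n}\|_{\Hal}^2-\|\n\phi_{\l,n}\|_2^2-\l\|u_n\|_2^2+\dots$ — more precisely the combination that equals $\|\phi_{\l,n}\|_2^2$ up to the cross terms with $\gl$ — are all bounded. Recalling $\|u_n\|_{\Hal}^2=\|\n\phi_{\l,n}\|_2^2+\l\|\phi_{\l,n}\|_2^2+(\a+\xi_\l)|q(u_n)|^2$, boundedness of $\|u_n\|_{\Hal}$ follows once $\|\phi_{\l,n}\|_2$ and $|q(u_n)|$ are bounded.

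The main obstacle, as the authors themselves flag, is precisely controlling $\|\phi_{\l,n}\|_2$ (equivalently $\l(\|\phi_{\l,n}\|_2^2-\|u_n\|_2^2)$ together with $\l\|u_n\|_2^2$): the quantity that naturally appears is the \emph{difference} $\|\phi_{\l,n}\|_2^2-\|u_n\|_2^2$, which is $-2\re\langle\phi_{\l,n},q(u_n)\gl\rangle-|q(u_n)|^2\|\gl\|_2^2$, so once $|q(u_n)|$ is bounded this difference is controlled by $\|\phi_{\l,n}\|_2$ via Cauchy--Schwarz, and one needs an extra inequality to close the loop. Here I would use the coercivity built into $I$ from \eqref{G<}: testing $\partial_u J(\theta_n,u_n)$ against $u_n$ and using $g(u_n)\bar u_n\le-(\omega_1-\eps)|u_n|^2+C_\eps|u_n|^p$ with $\omega_1>\omega_\a$ (hence $\l$ can be chosen with $\omega_\a<\l<\omega_1$ so that $\omega_1-\l-\eps>0$), one gets a term $+\tfrac12(\omega_1-\l-\eps)\|u_n\|_2^2$ with a good sign, which, combined with the already-established bound on $\|u_n\|_p$ coming from boundedness of $\|\n\phi_{\l,n}\|_2$, $|q(u_n)|$ and the radial Sobolev embedding applied to $\phi_{\l,n}$ (plus $\gl\in L^p$), yields a bound on $\|u_n\|_2$ and therefore on $\|\phi_{\l,n}\|_2$. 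Putting these together gives boundedness of $\|u_n\|_{\Hal}$, and by \eqref{equiv} this is independent of the choice of $\l$, so $\{u_n\}$ is bounded in $\Ha$.
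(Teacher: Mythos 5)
There are two genuine gaps in your proposal, and they correspond exactly to the two non-trivial devices in the paper's proof.

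\textbf{Gap in your Step 1.} You claim that the Jeanjean-type combination of $J(\theta_n,u_n)$ and $\partial_\theta J(\theta_n,u_n)$ already yields boundedness of $\|\nabla\phi_{\l,n}\|_2$ and $|q(u_n)|$ because ``all of $c,c',c'',c'''>0$''. But having a positive coefficient in front of $\l(\|\phi_{\l,n}\|_2^2-\|u_n\|_2^2)$ is useless here, because that quantity is \emph{sign-indefinite}: it equals $-2\l\,\re\langle\phi_{\l,n},q_n\gl\rangle-\l|q_n|^2\|\gl\|_2^2$, which can be large and negative (for instance, if $\phi_{\l,n}$ concentrates near the origin where $\gl$ is large). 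A sum of one sign-indefinite term and several nonnegative terms being bounded gives no control on any single term. This is precisely the obstacle the paper flags (``we are not able to conclude that $\|\nabla\phi_n\|_2$ is bounded because of the second term'') and resolves by a non-obvious move: it exploits the freedom in the parameter $\l$ to replace the fixed $\l$ by a sequence $\mu_n:=\l/(\|u_n\|_2^2+\|\phi_n\|_2^2)$, which forces $\mu_n\|u_n\|_2^2\le\l$ uniformly, so that in the $\mu_n$-decomposition $u_n=\psi_n+q_n\mathcal{G}_{\mu_n}$ the indefinite term becomes harmless; after deducing boundedness of $\|\nabla\psi_n\|_2$ and $|q_n|$ (where $\a>0$ is essential for the sign in \eqref{5.6-4}), it converts back to $\|\nabla\phi_{\l,n}\|_2$ through the uniform bound $\|\nabla(\mathcal{G}_{\mu_n}-\gl)\|_{L^2(\R^3)}\le C$. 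None of this appears in your proposal, and without it your Step~1 conclusion does not follow.

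\textbf{Gap in your Step 2.} Even granting $\|\nabla\phi_{\l,n}\|_2$ and $|q_n|$ bounded, your coercivity argument for $\|u_n\|_2$ needs an ``already-established bound on $\|u_n\|_p$''. That bound is not available. In $\R^3$ with $2<p<3$, boundedness of $\|\nabla\phi_{\l,n}\|_2$ controls $\|\phi_{\l,n}\|_6$ by Sobolev, but an $L^p$ bound for $p<6$ also requires $\|\phi_{\l,n}\|_2$ — the very quantity you are trying to bound — and a sequence with $\|\nabla\phi_n\|_2$, $\|\phi_n\|_6$ bounded but $\|\phi_n\|_2\to\infty$ can perfectly well have $\|\phi_n\|_p\to\infty$ for $p<3$ (e.g.\ flat bumps with amplitude $n^{-1}$ on balls of radius $n$). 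The radial Strauss inequality only yields pointwise decay on $\{|x|\ge1\}$ and is of no help on a bounded region. So this part is circular. The paper's Step~2 is instead a contradiction/blow-up argument: assume $\|u_n\|_2\to\infty$, rescale by $t_n=\|u_n\|_2^{-2/N}$, show the rescaled $\psi_n$ converges weakly to a $\psi_0$ with $g(\psi_0)=0$, hence $\psi_0\equiv0$; then Strauss compactness gives $\int h(v_n)\overline{v_n}\to0$ and testing $\partial_u J$ on $u_n$ forces $0<\omega_1-\l\le 0$, a contradiction. Your proposal would need to be replaced by something of this nature.
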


\begin{proof}
We fix $\l\in (\omega_\a, \omega_1)$.
For any $n\ge 1$, we write $u_n=\phi_{\l,n}+q(u_n) \cg_\l$. 
For simplicity, we set $\phi_n:=\phi_{\l,n}$ and $q_n=q(u_n)$.
The proof is divided into two steps.

\smallskip
\noindent
\textbf{Step 1}. We show that $\| \nabla \phi_n \|_2$ and $\{ q_n \}$ are bounded.

Now by \eqref{ps-ii}-\eqref{ps-iii} of Proposition \ref{prop:ps-seq}, we have
\begin{align}
\begin{split}\label{uno}
&\frac{e^{(N-2)\theta_n}}2 \|\n \phi_n\|_2^2
+\frac{e^{(N-2) \theta_n}\l}{2}\big(\| \phi_n\|_2^2
-\| u_n\|_2^2 \big) \\
&\qquad+ \frac{e^{2(N-2) \theta_n}}{2} (\a +\xi_{e^{-2\t_n}\l})|q_n|^2 
-e^{N\theta_n}\irn G(u_n) \,dx=\s +o_n(1),
\end{split}
\\
\begin{split}\label{due}
&\frac{(N-2)e^{(N-2)\theta_n}}2  \|\n \phi_n\|_2^2
+\frac{(N-2)e^{(N-2) \theta_n}\l}{2}\big(\| \phi_n\|_2^2
-\| u_n\|_2^2\big) 
-e^{(N-2) \theta_n}\l\|\cg_\l\|_2^2|q_n|^2 \\
&\qquad+(N-2)e^{2(N-2) \theta_n} (\a +\xi_{e^{-2\t_n}\l})|q_n|^2 
-Ne^{N\theta_n}\irn G(u_n) \,dx=o_n(1).
\end{split}
\end{align}
Here we used the fact:
\[
\frac{d}{d\theta} \left( \xi_{e^{-2\theta} \l} \right) 
=\left\{\begin{array}{ll}
\frac{d}{d\theta} ( e^{-\theta} \xi_\l) = - e^{-\theta} \xi_\l &(N=3) \\[2mm]
\frac{d}{d\theta} (\red{\xi_\l}- \frac{\theta}{2\pi}) = - \frac{1}{2\pi} & (N=2) 
\end{array}\right\}
= - 2 e^{-(N-2) \theta} \l \| \gl \|_2^2.
\] 
Multiplying \eqref{uno} by $N$ and subtracting by \eqref{due} we deduce that
\begin{align}\label{unodue}
\begin{split}
N \s + o_n(1) &=
e^{(N-2)\theta_n}\|\n \phi_n\|_2^2
+e^{(N-2) \theta_n}\l\big(\| \phi_n\|_2^2
-\| u_n\|_2^2\big) \\
&\qquad +e^{(N-2) \theta_n}\l\|\cg_\l\|_2^2|q_n|^2
+\frac{4-N}{2} e^{2(N-2) \theta_n} (\a +\xi_{e^{-2\t_n}\l})|q_n|^2.
\end{split}
\end{align}
Note that unlike the regular case $q=0$ as \cite{HIT},
we are not able to conclude that $\| \nabla \phi_n \|_2$ is bounded
because of the second term.
To overcome this difficulty, we further distinguish into two cases.

\smallskip
\noindent
\textbf{Case 1}. Suppose that 
$\dis \liminf_{n \to +\infty} \big(\| u_n\|_2^2 -\| \phi_n\|_2^2\big) > 2$.

In this case, let us set
\begin{equation} \label{5.6-2}
\mu_n:=\frac{\l}{\| u_n \|_2^2 +\| \phi_n\|_2^2}.
\end{equation}
Then we have $0< \mu_n \le \frac{\l}{2}$.
It is also important to mention that possibly $\mu_n \to 0$.

Now we write $u_n = \psi_n + q_n \cg_{\mu_n}$.
Since the value of $I$ is independent of the choice of $\l$,
it follows that
\begin{align*}
\begin{split}
N \s + o_n(1) &=
e^{(N-2)\theta_n}\|\n \psi_n\|_2^2
+e^{(N-2) \theta_n} \mu_n \big(\| \psi_n\|_2^2 -\| u_n\|_2^2\big) \\
&\qquad +e^{(N-2) \theta_n} \mu_n \|\cg_{\mu_n} \|_2^2|q_n|^2
+\frac{4-N}{2} e^{2(N-2) \theta_n} (\a +\xi_{e^{-2\t_n} \mu_n})|q_n|^2.
\end{split}
\end{align*}
Moreover from \ef{5.6-2}, we have 
$\mu_n \| u_n \|_2^2 \le \l$ and hence
\begin{align} \label{5.6-3}
\begin{split}
N \s + \l e^{(N-2) \theta_n} + o_n(1)
&\ge e^{(N-2)\theta_n}\|\n \psi_n\|_2^2 
+e^{(N-2) \theta_n} \mu_n \|\cg_{\mu_n} \|_2^2|q_n|^2 \\
&\qquad +\frac{4-N}{2} e^{2(N-2) \theta_n} (\a +\xi_{e^{-2\t_n} \mu_n})|q_n|^2.
\end{split}
\end{align}
At this stage, let us suppose that $N=3$ and $\alpha>0$.
Then from \ef{xi} and Proposition \ref{prop:2.1}-\ef{2.1-5}, one has 
\[
\xi_{e^{-2\t_n} \mu_n} = e^{-\theta_n} \xi_{\mu_n} \quad \text{and}
\quad \mu_n \| \cg_{\mu_n} \|_2^2 = \frac{\xi_{\mu_n}}{2},
\]
from which we arrive at
\begin{equation} \label{5.6-4}
\red{e^{(N-2) \theta_n}\mu_n\|\cg_{\mu_n}\|_2^2 
+\frac{4-N}{2} e^{2(N-2) \theta_n} (\a +\xi_{e^{-2\t_n}\mu_n})
= e^{\theta_n} \xi_{\mu_n}  + \frac{\alpha}{2} e^{2 \theta_n} .}
\end{equation}
Since $\xi_{\mu_n}>0$ and $\alpha>0$, we deduce from \ef{5.6-3} that
\[
3 \sigma + \l e^{\theta_n} + o_n(1) 
\ge e^{\theta_n} \| \nabla \psi_n \|_2^2 + \frac{\alpha}{2} e^{2\theta_n} |q_n|^2,
\]
yielding that $\| \nabla \psi_n \|_2$ and $\{ q_n \}$ are bounded. 
Clearly this is not enough 
and we have to show that $\| \nabla \phi_n \|_2$ is bounded, too.

Recalling that $u_n= \phi_n + q_n \gl = \psi_n + q_n \cg_{\mu_n}$, we have
\begin{equation} \label{5.6-5}
\| \nabla \phi_n \|_2 \le \| \nabla \psi_n \|_2
+ |q_n| \| \nabla ( \cg_{\mu_n} - \gl) \|_2.
\end{equation}
By the Plancherel theorem, one also finds that
\[
\| \nabla ( \cg_{\mu_n} - \gl) \|_2
= \left\| |\xi| \left| \frac{1}{|\xi|^2 + \mu_n} - \frac{1}{|\xi|^2+\l} \right| \right\|_2
= | \l - \mu_n | \left\| \frac{|\xi|}{(|\xi|^2+ \mu_n)(|\xi|^2 + \l)} \right\|_2.
\]
Since $0< \mu_n \le \frac{\l}{2}$, it follows that
\begin{equation} \label{5.6-6}
\| \nabla ( \cg_{\mu_n} - \gl) \|_{L^2(\R^3)}
\le \l \left\| \frac{1}{|\xi|(|\xi|^2+\l)} \right\|_{L^2(\R^3)} < +\infty.
\end{equation}
Thus from \ef{5.6-5}, we conclude that $\| \nabla \phi_n \|_2$ is bounded.

\smallskip
\noindent
\textbf{Case 2}. Suppose that 
$\dis \liminf_{n \to +\infty} \big(\| u_n\|_2^2 -\| \phi_n\|_2^2\big) \le 2$.

In this case, passing to a subsequence, we may assume that
$\| u_n \|_2^2 - \| \phi_n \|_2^2 \le 3$.
Then from \ef{unodue}, one deduces that
\begin{align*}
N \sigma + 3 \l e^{(N-2)\theta_n} + o_n(1)
&\ge e^{(N-2)\theta_n} \| \nabla \phi_n \|_2^2 
+e^{(N-2)\theta_n} \l \| \gl \|_2^2 |q_n|^2 \\
&\qquad + \frac{4-N}{2}e^{2(N-2)\theta_n} (\alpha + \xi_{e^{-2\theta_n \l}}) |q_n|^2.
\end{align*}
Since, as $n \to +\infty$, $\alpha + \xi_{e^{-2\theta_n \l}} \to \alpha + \xi_\l >0$
for $\l \in (\omega_{\alpha}, \omega_1)$, 
we are able to obtain the boundedness of $\| \nabla \phi_n \|_2$ and $\{ q_n \}$.

\smallskip
\noindent
\textbf{Step 2}. We prove that $\| \phi_n \|$ is bounded.

Now by Proposition \ref{prop:ps-seq}-\eqref{ps-iv}, 
we know that $\|\de_uJ(\theta_n,u_n)\|_{(\Hal)'}=o_n(1)$ and thus
\begin{equation*}
\big| \de_uJ(\theta_n,u_n)[u] \big| =o_n(1)\|u\|_{\Hal}
\quad \text{for all }u\in \Har.
\end{equation*}
This implies that, if $u=\phi_\l+q(u) \cg_\l$, we have
\begin{align}
\begin{split}\label{unophi}
& \re \Big\{ e^{(N-2)\theta_n} \irn \n \phi_n \cdot \n \overline{\phi_\l} \, dx
+e^{(N-2) \theta_n}\l\irn \phi_n \overline{\phi_\l} \, dx
-e^{(N-2) \theta_n}\l\ird u_n \bar{u} \, dx  \\
&\qquad +e^{2(N-2)\theta_n} (\a +\xi_{e^{-2\theta_n} \l})q_n \overline{q(u)}  
-e^{N\theta_n}\irn g(u_n) \bar{u} \,dx \Big\} 
=o_n(1)\|u\|_{\Hal}.
\end{split}
\end{align} 
Suppose by contradiction that $\|\phi_n\|_2\to +\infty$.
Since $\| \phi_n \|_2 \le \| u_n \|_2 + |q_n| \| \gl \|_2 
\le \| u_n \|_2 +C$ by Step 1, it follows that $\| u_n \|_2 \to +\infty$ as well.
Let us put $t_n:=\| u_n\|_2^{-\frac 2N}\to 0$.
We also set
\[
v_n (x) := u_n \left(t_n^{-1} x \right)
=\phi_n \left(t_n^{-1} x \right) +q_n \cg_\l\left(t_n^{-1} x\right)
= \phi_n\left(t_n^{-1} x \right) + q_n t_n^{N-2} \cg_{\frac{\l}{t_n^2}} (x)
\]
and $\psi_n(x):=\phi_n\left(t_n^{-1}x\right)$.
Then one has
\[
\| v_n \|_2=1, \quad 
\| \psi_n \|_2^2 = t_n^N \| \phi_n \|_2^2
\le \frac{( \| u_n \|_2 +C)^2}{ \| u_n \|_2^2} \le C
\quad \text{and} \quad \| \nabla \psi_n \|_2^2 = t_n^{N-2} \| \nabla \phi_n \|_2^2 \to 0.
\]
Especially $\{\psi_n\}$ is bounded in $\H$.
Thus there exists $\psi_0\in \H$ such that, up to a subsequence, 
$\psi_n\weakto \psi_0$ weakly in $\H$ 
and $\psi_n \to \psi_0$ in $L^\tau_{\rm loc}(\RN)$ 
for $1\le \tau<6$ if $N=3$, and $1\le \tau<+\infty$ if $N=2$. 
Moreover, being $\{q_n\}$ bounded, 
there exists $q_0\in \C$ such that $q_n \to q_0$, up to a subsequence. 

Let $\vfi\in \H$ with compact support. 
Applying \eqref{unophi} to $u=\vfi(t_n \cdot)$, being $q(u)=0$, one finds that 
\begin{align*}
\begin{split}
&\re \Big\{ e^{(N-2)\theta_n} t_n^{-(N-2)}\irn \n \psi_n \cdot \n \bar{\vfi} \, dx
-e^{(N-2) \theta_n}\l q_n
t_n^{-N} \irn \cg_{\l} (t_n^{-1}x) \bar{\vfi} \, dx \\
& \qquad-e^{N\theta_n}t_n^{-N}\irn g(v_n) \bar{\vfi} \,dx \Big\} 
=o_n(1)\sqrt{ t_n^{-(N-2)}\|\n \vfi\|_2^2+t_n^{-(N-2)} \l \|\vfi\|_2^2}.
\end{split}
\end{align*} 
Multiplying by $t_n^N$, we obtain
\begin{align}
\begin{split}\label{unovfi}
& \re \Big\{ e^{(N-2)\theta_n} t_n^{2}\irn \n \psi_n \cdot \n \bar{\vfi} \, dx
-e^{(N-2) \theta_n}\l q_n \irn \cg_\l\left(t_n^{-1}x\right) \bar{\vfi} \, dx \\
&\qquad-e^{N\theta_n}\irn g(v_n) \bar{\vfi} \,dx \Big\} 
=o_n(1)t_n^{\frac{N+2}{2}}\sqrt{ \|\n \vfi\|_2^2+ \l \|\vfi\|_2^2}.
\end{split}
\end{align} 
Since $\cg_\l$ is radially decreasing and decays to zero at infinity, we have
\[
\left| \cg_\l\left(t_n^{-1}x\right) \overline{\vfi(x)} \right| \to 0 
\quad \text{for all }x\neq 0 \ \text{ as }n \to +\infty.
\]
Moreover, for sufficiently large $n \in \N$, it holds that
\[
|\cg_\l\left(t_n^{-1}x\right) \overline{\vfi(x)} |
=|\cg_\l\left( |t_n^{-1}x| \right) \overline{\vfi(x)} |
\le |\cg_\l(|x|) \overline{\vfi(x)} |\in L^1(\RN)
\]
and by the Lebesgue dominated convergence theorem, we find that
\begin{equation*}
\irn \cg_\l\left(t_n^{-1}x\right) \overline{\vfi(x)} \, dx \to 0 
\quad \text{ as }n \to +\infty.
\end{equation*}
Moreover by Proposition \ref{prop:2.1}-\eqref{2.1-2}, one has
\begin{equation*}
\| \cg_\l\left(t_n^{-1}\cdot\right)\|_\tau^\tau
=t_n^N\| \cg_\l\|_\tau^\tau \to 0
\quad \text{ for }
\begin{cases}
1\le \tau<3 &(N=3), \\
1 \le \tau<+\infty &(N=2),
\end{cases}
\end{equation*} 
and hence we deduce that $v_n \to \psi_0$ a.e. in $\R^N$ and
\begin{equation*}
v_n\to \psi_0
\quad \text{in} \ L^\tau_{\rm loc}(\RN) \quad \text{for} \ 
\begin{cases}
1\le \tau<3 &(N=3), \\
1\le \tau<+\infty &(N=2).
\end{cases}
\end{equation*} 
Thus we obtain 
\[
\irn g(v_n) \bar{\vfi} \,dx \to \irn g(\psi_0) \bar{\vfi} \,dx 
\quad \text{ as }n \to +\infty.
\]
Hence, by \eqref{unovfi}, we infer that
\[
\re\irn g(\psi_0) \bar{\vfi} \,dx=0 
\quad\text{for all $\vfi\in \H$ with compact support}.
\]
This implies that $g(\psi_0)=0$ and, thanks to \eqref{g2}, then $\psi_0 \equiv 0$. 

Next since
\begin{equation*}
\big| \de_uJ(\theta_n,u_n)[u_n] \big| =o_n(1)\|u_n\|_{\Hal},
\end{equation*}
we have
\begin{align*}
&e^{(N-2)\theta_n} \| \n \phi_n \|_2^2
+e^{(N-2) \theta_n}\l \left( \|\phi_n \|_2^2 - \|u_n\|_2^2 \right)
+(\a +\xi_{e^{-2\theta_n}\l}) e^{2(N-2)\theta_n} |q_n|^2 \\
&-e^{N\theta_n}\irn g(u_n) \overline{u_n} \,dx =o_n(1)\|u_n\|_{\Hal}.
\end{align*}
Thus one finds that
\begin{align*}
& e^{(N-2)\theta_n}t_n^{-(N-2)} \| \n \psi_n \|_2^2
+e^{(N-2) \theta_n}t_n^{-(N-2)} \l \left( \|\psi_n \|_2^2 - \|v_n\|_2^2 \right) \\
&\qquad 
+ (\a +\xi_{e^{-2\theta_n} t_n^2 \l}) e^{2(N-2)\theta_n} t_n^{-2(N-2)} |q_n|^2
-e^{N\theta_n} t_n^{-N}\irn g(v_n) \overline{v_n} \,dx \\
&=o_n(1)\sqrt{ t_n^{-(N-2)}\|\n \psi_n\|_2^2+t_n^{-(N-2)}\l \|\psi_n\|_2^2
+ t_n^{-2(N-2)} |q_n|^2}.
\end{align*} 
Multiplying by $t_n^N$, we obtain
\begin{align*}
& e^{(N-2)\theta_n}t_n^{2} \| \n \psi_n \|_2^2
+e^{(N-2) \theta_n} t_n^2 \l \left( \|\psi_n \|_2^2 - \|v_n\|_2^2 \right) 
+ (\a +\xi_{e^{-2\theta_n} t_n^2 \l}) e^{2(N-2)\theta_n} t_n^{4-N} |q_n|^2 \\
&\qquad =e^{N\theta_n}\irn g(v_n) \overline{v_n} \,dx 
+o_n(1)t_n^2
\sqrt{ t_n^{N-2} \|\n \psi_n\|_2^2+ t_n^{N-2} \l \|\psi_n\|_2^2+ |q_n|^2}.
\end{align*} 
Now for $\l \in (\omega_{\alpha}, \omega_1)$, 
we have from $\alpha>0$, $\xi_\l >0$, $t_n \to 0$ 
and $e^{(N-2)\theta_n} t_n^2 \le e^{\theta_n}$ that
\begin{align*}
e^{(N-2)\theta_n} t_n^2 \l \| \psi_n \|_2^2 
+ e^{N \theta_n}( \omega_1 - \l) \| v_n \|_2^2 
&\le e^{(N-2)\theta_n} t_n^2 \l \| \psi_n \|_2^2 
+ \left( e^{N \theta_n} \omega_1 - e^{(N-2)\theta_n} t_n^2 \l \right) \| v_n \|_2^2 \\ 
&\le e^{(N-2)\theta_n} t_n^{2} \| \nabla \psi_n \|_2^2
+e^{(N-2)\theta_n} t_n^2 \l \left( \| \psi_n\|_2^2 - \| v_n \|_2^2 \right) \\
&\quad + e^{N \theta_n} \omega_1 \| v_n \|_2^2
+( \alpha \xi_{e^{-2\theta_n} t_n^2 \l}) e^{2(N-2)\theta_n} t_n^{4-N} |q_n|^2.
\end{align*}
In addition, by \ef{g-est}, one also finds that
\[
e^{N \theta_n} \irn g(v_n) \overline{v_n} \,dx
\le e^{N\theta_n} \irn h(v_n) \overline{v_n} \,dx
- e^{N\theta_n} \omega_1 \| v_n \|_2^2.
\]
Thus, from $\| v_n \|_2=1$ and the last three inequalities deduce that
\begin{align} \label{eq:3.6}
e^{(N-2) \theta_n} (\omega_1 - \l) 
&\le e^{N\theta_n}\irn h (v_n) \overline{v_n} \,dx
+o_n(1)t_n^2 
\sqrt{ t_n^{N-2} \|\n \psi_n\|_2^2+ t_n^{N-2} \l \|\psi_n\|_2^2+ |q_n|^2}.
\end{align} 
On the other hand by Radial Strauss Lemma \cite{Str}, there exists $C>0$ such that, 
for any $n\ge1$ and $x\in \RN$ with $|x|\ge 1$, it holds that
\[
\begin{aligned}
|v_n(x)| &\le |\psi_n(  x)|+|q_n| |\cg_\l( t_n^{-1} x)|
&\le \frac{C}{|x|^{\frac{N-1}{2}}} \| \psi_n \|_{H^1} 
+ |q_n| |\cg_\l(|x|)| 
\le \frac{C}{|x|^{\frac{N-1}2}}.
\end{aligned}
\] 
Then from \ef{eq:3.7},
we are able to apply the Strauss compactness lemma
\cite[Theorem A.1]{BL}, \cite[Lemma 2]{Str} to obtain
\[
\irn h(v_n) \overline{v_n} \,dx \to \irn h(\psi_0) \overline{\psi_0} \,dx =0
\quad \text{as} \ n \to +\infty.
\]
Thus from \ef{eq:3.6}, we deduce that $0< \omega_1 - \l \le 0$,
reaching a contradiction 
and proving the boundedness $\{\phi_n\}$ in $L^2(\RN)$, as desired.
\end{proof}

\begin{remark} \label{rem:5.7}
When $N=3$ and $\alpha \red{\le} 0$, the argument of Step 1 of Lemma \ref{pr:bdd} fails.
Indeed since $\mu_n$ may goes to $0$, 
we cannot see if the right hand side of \ef{5.6-4} is positive when $\alpha \red{\le} 0$.

In the case $N=2$, we can also observe from \ef{xi}
and Proposition \ref{prop:2.1}-\ef{2.1-5} that
\[
\begin{aligned}
&\frac{4-N}{2} (\alpha + \xi_{e^{-2\theta_n} \mu_n } ) e^{2(N-2) \theta_n} |q_n|^2
+ e^{(N-2) \theta_n} \mu_n \| \cg_{\mu_n} \|_2^2 |q_n|^2 \\
&= \alpha |q_n|^2 
+ \left( \xi_\l - \frac{1}{4\pi} \log ( \| u_n \|_2^2 + \| \phi_n \|_2^2)
- \frac{\theta_n}{2\pi} \right)|q_n|^2 + \frac{1}{4\pi} |q_n|^2.
\end{aligned}
\]
But since $\| u_n \|_2$ may go to $+\infty$, we cannot conclude
the boundedness of $\{ q_n \}$, as before.
Moreover when $N=2$, it follows that
\[
\left\| \frac{1}{|\xi| (|\xi|^2+\l)} \right\|_{L^2(\R^2)}= +\infty,
\]
implying that the boundedness of $\| \nabla \phi_n \|_2$
from that of $\| \nabla \psi_n \|_2$ is unclear.

It is also worth mentioning that Step 2 of Lemma \ref{pr:bdd} works well
even if $N=3$, $\alpha \red{\le} 0$ or $N=2$.
\end{remark} 

As explained in the previous remark, whenever $N=3$, $\alpha \red{\le} 0$ or $N=2$, 
the previous arguments do not work under the assumptions \ef{g1}-\ef{g4}. 
Therefore, in this case, in place of \eqref{g4}, we have to require \eqref{g5}. 
Observe that, under this growth condition, the situation is more straightforward. 
In particular,  the auxiliary functional $J$ is no more necessary 
and we can directly deal with classical Palais-Smale sequences.

\begin{lemma} \label{lem:5.7}
Suppose that $N=3$, $\alpha \red{\le} 0$ or $N=2$.
Assume \ef{g1}-\ef{g3} and \eqref{g5}.
For $c>0$, let $\{ u_n \} \subset \Ha$ be a $(PS)_c$-sequence for $I$.
Then $\{ u_n \}$ is bounded in $\Ha$.
\end{lemma}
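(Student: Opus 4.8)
The plan is to carry out the standard Ambrosetti--Rabinowitz argument in the energy space $\Ha$. Fix once and for all $\lambda\in(\omega_\alpha,\omega)$; this is possible by \eqref{g2}, which gives $\omega>\omega_\alpha$. Since the value of $I$ does not depend on the decomposition parameter, we write $u_n=\phi_n+q_n\gl$ with $\phi_n:=\phi_{\lambda,n}$ and $q_n:=q(u_n)$, and record, as $n\to\infty$, the two relations coming from the $(PS)_c$ hypothesis:
\begin{align*}
I(u_n)&=\tfrac12\|\n\phi_n\|_2^2+\tfrac\lambda2\big(\|\phi_n\|_2^2-\|u_n\|_2^2\big)+\tfrac12(\alpha+\xi_\lambda)|q_n|^2-\irn G(u_n)\,dx=c+o_n(1),\\
I'(u_n)[u_n]&=\|\n\phi_n\|_2^2+\lambda\big(\|\phi_n\|_2^2-\|u_n\|_2^2\big)+(\alpha+\xi_\lambda)|q_n|^2-\irn g(u_n)\overline{u_n}\,dx=o_n(1)\|u_n\|_{\Hal},
\end{align*}
the second estimate using $\|I'(u_n)\|_{(\Ha)'}\to0$ together with duality with respect to $\|\cdot\|_{\Hal}$. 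Note that both right-hand quantities are real, since $g(u)\overline{u}=g(|u|)|u|$ and the quadratic form is real-valued.

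The key step is to form $I(u_n)-\tfrac1\beta I'(u_n)[u_n]$, with $\beta>2$ the exponent of \eqref{g5}. Writing $g(s)=-\omega s+h(s)$ and hence $G(s)=-\tfrac\omega2 s^2+H(s)$, the nonlinear part of this combination equals
\[
\irn\Big(\tfrac1\beta h(u_n)\overline{u_n}-H(u_n)\Big)\,dx+\omega\Big(\tfrac12-\tfrac1\beta\Big)\irn|u_n|^2\,dx,
\]
and the first integral is nonnegative by \eqref{g5} (extended to $\C$ as in the statement), while $\omega>0$. Using in addition $\alpha+\xi_\lambda>0$ for $\lambda>\omega_\alpha$ and merging $-\lambda\|u_n\|_2^2$ with $\omega\|u_n\|_2^2$, one obtains
\[
I(u_n)-\tfrac1\beta I'(u_n)[u_n]\ \ge\ \Big(\tfrac12-\tfrac1\beta\Big)\Big(\|\n\phi_n\|_2^2+\lambda\|\phi_n\|_2^2+(\alpha+\xi_\lambda)|q_n|^2+(\omega-\lambda)\|u_n\|_2^2\Big),
\]
and, since $\omega>\lambda$, the right-hand side is at least $\big(\tfrac12-\tfrac1\beta\big)\|u_n\|_{\Hal}^2$.

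On the other hand, by the $(PS)_c$ estimates the same quantity is bounded above by $|I(u_n)|+\tfrac1\beta|I'(u_n)[u_n]|\le c+1+o_n(1)\|u_n\|_{\Hal}$ for $n$ large. Comparing the two bounds gives $\big(\tfrac12-\tfrac1\beta\big)\|u_n\|_{\Hal}^2\le c+1+o_n(1)\|u_n\|_{\Hal}$, whence $\{\|u_n\|_{\Hal}\}$ is bounded, and by \eqref{equiv} so is $\{u_n\}$ in $\Ha$. I do not expect a serious obstacle here --- this is the more straightforward case announced in the introduction; the only points deserving attention are the reality of $I'(u_n)[u_n]$ noted above and the choice $\lambda<\omega$, which turns the a priori wrong-sign term $-\tfrac\lambda2\|u_n\|_2^2$ into a harmless one once the coercive term $+\tfrac\omega2\|u_n\|_2^2$ supplied by \eqref{g5} is at hand. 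Radial symmetry plays no role in this estimate; it enters only afterwards, for the strong convergence of Palais--Smale sequences.
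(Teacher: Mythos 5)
Your proof is correct and is essentially the same as the paper's: fix $\lambda\in(\omega_\alpha,\omega)$, split $g=-\omega s+h$ and $G=-\tfrac{\omega}{2}s^2+H$, and combine $I(u_n)$ with $I'(u_n)[u_n]$ so that \eqref{g5} kills the nonlinear remainder and the coefficient $\omega-\lambda>0$ of $\|u_n\|_2^2$ supplies coercivity in the $\Hal$-norm. The only cosmetic difference is that you use $I(u_n)-\tfrac{1}{\beta}I'(u_n)[u_n]$ where the paper uses $\beta I(u_n)-I'(u_n)[u_n]$, which is the same inequality rescaled by $\beta$.
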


\begin{proof}
We fix $\l \in (\omega_{\alpha}, \omega)$ and decompose $u_n = \phi_n + q(u_n) \gl$. 
Then one has
\begin{align*}
c+o_n(1) &= \frac{1}{2} \| \nabla \phi_n \|_2^2
+ \frac{\l}{2} \| \phi_n \|_2^2
+ \frac{\omega - \l}{2} \| u_n \|_2^2
+ \frac{\alpha + \xi_\l}{2} |q(u_n)|^2
- \irn H(u_n) \,dx, \\
o_n(1) \| u_n \|_{\Hal}
&= \| \nabla \phi_n \|_2^2 + \l \| \phi_n \|_2^2 + (\omega - \l) \| \phi_n \|_2^2
+(\alpha + \xi_\l) |q(u_n)|^2 - \irn h(u_n) \overline{u_n} \,dx.
\end{align*}
Thus from \eqref{g5}, we deduce that
\[
\beta c - o_n(1) \| u_n \|_{\Hal}
\ge \frac{\beta -2}{2} 
\left( \| \nabla \phi_n \|_2^2 + \l \| \phi_n \|_2^2
+ (\omega - \l) \| u_n \|_2^2 + (\alpha + \xi_\l) |q(u_n)|^2 \right),
\]
yielding that $\| u_n \|_{\Hal}$ is bounded.
\end{proof}

\begin{proposition} \label{pr:3.7}
Assume \ef{g1}-\ef{g4} if $N=3$, $\alpha>0$,
and \ef{g1}-\ef{g3} and \eqref{g5} if $N=3$, $\alpha \red{\le} 0$ or $N=2$.
Then $I$ has a nontrivial critical point $u_0 \in \Ha$ of mountain pass type, 
namely $I(u_0) = \sigma$, where $\s$ is defined in \eqref{MP}.
\end{proposition}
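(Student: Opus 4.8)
The plan is to produce, in both cases, a bounded Palais--Smale sequence for the restriction $I|_{\Har}$ at the level $\s$ of \ef{MP}, and then to obtain its strong convergence in $\Ha$ by exploiting the compactness of the superlinear part of the nonlinearity on radial functions, after absorbing the linear behaviour at the origin into a coercive quadratic form.

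First I would construct such a sequence. When $N=3$ and $\a>0$, I take the sequence $\{(\theta_n,u_n)\}\subset\R\times\Har$ of Proposition \ref{prop:ps-seq}, which is bounded in $\Ha$ by Lemma \ref{pr:bdd}. Since $\theta_n\to0$, inspection of the definition \ef{eq:3.1} of $J$ and of its partial derivative in $u$ (cf. \ef{unophi}) shows that, along the bounded sequence, $J(\theta_n,u_n)$ and $\de_uJ(\theta_n,u_n)$ differ from $I(u_n)$ and $I'(u_n)$ by $o_n(1)$ (resp. by $o_n(1)$ in $(\Har)'$), because all the scaling coefficients tend to those of $I$; hence $I(u_n)\to\s$ and $\|I'(u_n)\|_{(\Har)'}\to0$. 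When $N=3$, $\a\le0$, or $N=2$, Lemma \ref{lem:MP} gives that $I|_{\Har}$ has the mountain pass geometry with level $\s$, so the mountain pass theorem yields a $(PS)_\s$ sequence $\{u_n\}\subset\Har$, bounded in $\Ha$ by Lemma \ref{lem:5.7} (this is the step requiring \ef{g5}). In either case I fix $\l\in(\omega_\a,\omega_1)$ and write $u_n=\phi_n+q(u_n)\gl$. Up to a subsequence $\phi_n\weakto\phi_0$ in $\H$; since $u\mapsto q(u)$ is linear and bounded on $\Ha$, $q(u_n)\to q_0$, hence $u_n\weakto u_0:=\phi_0+q_0\gl$ in $\Ha$ with $q(u_0)=q_0$. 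By the Strauss radial compactness lemma \cite[Theorem~A.1]{BL}, $\phi_n\to\phi_0$ in $L^r(\RN)$ for $r\in(2,6)$ if $N=3$ and $r\in(2,+\infty)$ if $N=2$; together with $q(u_n)\to q_0$ and Proposition \ref{prop:2.1}-\ef{2.1-2}, this gives $u_n\to u_0$ in $L^p(\RN)$ with $p$ as in \ef{g3}, and $u_n\to u_0$ a.e.

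Next I would upgrade this to strong convergence. Using the auxiliary function $h$ of \ef{hdef}, write $g=-\omega_1\Id+h-k$ with $k:=h-(g+\omega_1\Id)$; by \ef{g2}--\ef{g3} both $h$ and $k$ are nonnegative, $h$ vanishes near $0$, and $h(s)+k(s)\le C(s+s^{p-1})$ for $s\ge0$. Correspondingly $I(u)=Q(u)-\irn H(u)\,dx+\irn K(u)\,dx$, where $H'=h$, $K'=k$ and
\[
Q(u):=\tfrac12\|\n\phi_\l\|_2^2+\tfrac\l2\|\phi_\l\|_2^2+\tfrac{\omega_1-\l}2\|u\|_2^2+\tfrac12(\a+\xi_\l)|q(u)|^2
\]
is, since $\l<\omega_1$, a positive quadratic form with $\tfrac12\|u\|_{\Hal}^2\le Q(u)\le C\|u\|_{\Hal}^2$. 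Setting $w_n:=u_n-u_0\weakto0$ and splitting off the weak limit by a Brezis--Lieb argument, one obtains
\[
I'(u_n)[w_n]=2Q(w_n)+\irn k(|w_n|)\,|w_n|\,dx+o_n(1),
\]
since the cross terms of the quadratic part vanish ($w_n\weakto0$), the contribution of $h$ vanishes ($h$ is superlinear and $w_n\to0$ in $L^p(\RN)$), and $\re\,k(w_n)\overline{w_n}=k(|w_n|)|w_n|\ge0$. The left-hand side is $o_n(1)$ because $\|I'(u_n)\|_{(\Har)'}\to0$ and $\{w_n\}$ is bounded, and both terms on the right are nonnegative; hence $Q(w_n)\to0$, i.e. $u_n\to u_0$ in $\Ha$.

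Finally, by continuity $I(u_0)=\lim_nI(u_n)=\s$ and $I'(u_0)=\lim_nI'(u_n)=0$ in $(\Har)'$; since $I$ is $O(N)$-invariant and $\Har$ is its fixed-point subspace, the principle of symmetric criticality gives $I'(u_0)=0$ in $(\Ha)'$, so $u_0$ is a weak solution of \ef{eq} by Proposition \ref{prop:2.8}, and $I(u_0)=\s>0=I(0)$ forces $u_0\neq0$. The main obstacle is exactly the strong convergence: the linear behaviour of $g$ at the origin prevents the whole nonlinear term from being compact on $\Ha$, and it is tamed only by shifting it into the coercive form $Q$ through the auxiliary function $h$, by restricting to radial functions so that the genuinely superlinear remainder $h$ becomes compact, and by using that the residual term $k$ is nonnegative, so that its self-interaction $\int k(|w_n|)|w_n|\,dx$ has the right sign. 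A secondary point, specific to the case $N=3$, $\a>0$, is that the only sequence at our disposal comes from the $\theta$-augmented functional $J$ (the device that secures boundedness in Lemma \ref{pr:bdd}), so one must first use $\theta_n\to0$ to turn it into an authentic bounded $(PS)_\s$ sequence for $I$.
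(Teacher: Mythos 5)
Your overall structure is sound and some steps are handled more explicitly than in the paper (the conversion of the $J$-Palais--Smale sequence into a genuine $I$-Palais--Smale sequence using $\theta_n\to 0$, and the symmetric criticality step), but the compactness argument has a genuine gap at the treatment of the $k$-term. You assert, as following from ``a Brezis--Lieb argument,'' the identity $\re\int_{\RN} k(u_n)\overline{w_n}\,dx = \int_{\RN} k(|w_n|)|w_n|\,dx + o_n(1)$. This is not a standard Brezis--Lieb statement (the integrand is $k(u_n)\overline{w_n}$, a product of two different sequences, not $\Phi(u_n)$), and there is no reason for it to hold under \ef{g1}--\ef{g4}. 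Indeed $k=h-(g+\omega_1\mathrm{Id})=\big(-(g+\omega_1\mathrm{Id})\big)^+$ is merely continuous and satisfies $k(s)\ge (\omega-\omega_1-\eps)s$ for $s$ small (because $\limsup_{s\to 0^+}g(s)/s=-\omega<-\omega_1$), i.e.\ it has genuine \emph{linear} growth near the origin. Linear-growth pieces are exactly the ones the Brezis--Lieb splitting does not handle via $L^p$ smallness of $w_n$, and since $g$ is only assumed continuous one cannot invoke the $C^1$ versions of the lemma either. So as written the step is unjustified.

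What makes the argument go through in the paper is weaker but robust: the derivative of the augmented functional is tested against $u_n$ (not against $w_n$), so the $k$-contribution enters as $\int k(|u_n|)|u_n|\,dx$, for which only the \emph{one-sided} inequality $\liminf_n\int k(|u_n|)|u_n|\,dx\ge\int k(|u_0|)|u_0|\,dx$ is needed, and this is immediate from $k\ge 0$ and a.e.\ convergence via Fatou's lemma; the matching upper bound comes from the limit identity $2Q(u_0)=\int h(u_0)\overline{u_0}\,dx-\int k(|u_0|)|u_0|\,dx$ obtained by passing to the limit in $\de_u J(\theta_n,u_n)[u]$ with fixed test $u$ and then taking $u=u_0$, while the $h$-part converges by the Strauss compactness lemma. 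If you want to keep your Brezis--Lieb-style scheme, you would have to replace your identity by the one-sided estimate $\liminf_n\re\int k(u_n)\overline{w_n}\,dx\ge 0$, which can be obtained by writing $\re\int k(u_n)\overline{w_n}=\int k(|u_n|)|u_n|-\re\int k(u_n)\overline{u_0}$, applying Fatou to the first term, and a Vitali/uniform-integrability argument (using the decay of $u_0$ in $L^2\cap L^p$) to show $\re\int k(u_n)\overline{u_0}\to\int k(|u_0|)|u_0|$. That is a substantive patch, not a reference to a standard lemma, so it needs to be spelled out.
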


\begin{proof}
First we consider the case $N=3$ and $\alpha>0$.
Let $\{u_n\} \subset \Har$ be the sequence in Proposition \ref{prop:ps-seq} 
and fix $\l\in(\omega_\a,\omega)$.
For any $n\ge 1$, we decompose $u_n=\phi_{\l,n}+q(u_n) \cg_\l$. 
For simplicity, we set $\phi_n:=\phi_{\l,n}$ and $q_n=q(u_n)$.
By Lemma \ref{pr:bdd}, we know that $\{ u_n \}$ is bounded in $\Ha$. 
Especially $\| \phi_n \|_{H^1(\R^N)}$ and $\{ q_n \}$ are bounded. 
Thus there exist $\phi_0\in \H$ and $q_0\in \C$ such that, up to subsequences, 
$\n \phi_n \rightharpoonup \n \phi_0$ weakly in $L^2(\RN)$,
$\phi_n \rightharpoonup \phi_0$ weakly in $L^2(\RN)$ 
and almost everywhere in $\RN$, and $q_n \to q_0$ as $n \to +\infty$. 
We set $u_0:=\phi_0+q_0\cg_\l$.
Clearly $u_0$ is a critical point of $I$ and we aim to prove that it is nontrivial.

Now by Radial Strauss Lemma \cite{Str}, there exists $C>0$ such that, 
for any $n\ge1$ and $x\in \RN$ with $|x|\ge 1$, it holds that
\[
|u_n(x)|\le |\phi_n(x)|+|q_n|\cg_\l(x)\le \frac{C}{|x|^{\frac{N-1}2}}.
\] 
Then from \ef{eq:3.7}, we can apply the Strauss compactness lemma again
to deduce that
\begin{equation*}
\irn h(u_n) \overline{u_n} \,dx\to \irn h(u_0) \overline{u_0} \,dx
\quad \text{ as }n\to +\infty. 
\end{equation*}

Next by \eqref{ps-iv} of Proposition \ref{prop:ps-seq}, 
for any $u=\phi_\l+q(u) \cg_\l\in \Har$, we have
\begin{align*}
& \re \Big\{ e^{(N-2)\theta_n} \irn \n \phi_n \cdot \n \overline{\phi_\l} \, dx
+e^{(N-2) \theta_n}\l\irn \phi_n \overline{\phi_\l} \, dx
-e^{(N-2) \theta_n}\l\ird u_n \bar{u} \, dx  \\
&\qquad + (\a +\xi_{e^{-2\theta_n}\l}) e^{2(N-2)\theta_n} q_n \overline{q(u)} 
-e^{N\theta_n}\irn g(u_n) \bar{u} \,dx \Big\} 
=o_n(1),
\end{align*} 
and hence
\[
\re \left\{ \irn \n \phi_0 \cdot \n \overline{\phi_\l} \, dx
+\l\irn \phi_0 \overline{\phi_\l} \, dx
-\l\ird u_0 \bar{u} \, dx
+(\a +\xi_\l) q_0 \overline{q(u)} 
-\irn g(u_0) \bar{u} \,dx \right\} 
=0.
\]
In particular, we have
\begin{equation*}
 \|\n \phi_0 \|_2^2
+\l\| \phi_0 \|_2^2
-\l\| u_0 \|_2^2
+(\a +\xi_\l)|q_0|^2
-\irn g(u_0) \overline{u_0} \,dx=0.
\end{equation*}
Again by \eqref{ps-iv} of Proposition \ref{prop:ps-seq}, we also have
\begin{align*}
& e^{(N-2)\theta_n} \|\n \phi_n \|_2^2
+e^{(N-2)\theta_n}\l\| \phi_n \|_2^2
-e^{(N-2)\theta_n}\l\| u_n \|_2^2 \\
&\qquad +(\a +\xi_{e^{-2\theta_n}\l}) e^{2(N-2)\theta_n} |q_n|^2
-e^{N\theta_n}\irn g(u_n) \overline{u_n} \,dx=o_n(1).
\end{align*}
Therefore, since $e^{(N-2)\theta_n} -e^{N\theta_n}\to 0$,
$\alpha \xi_{e^{-2\theta_n} \l} \to \alpha + \xi_\l >0$ and
\begin{align*}
&e^{(N-2)\theta_n} \left( \|\n \phi_n \|_2^2
+ \l \| \phi_n \|_2^2 +(\omega_1-\l) \| u_n \|_2^2
+ (\a +\xi_\l)|q_n|^2 \right) \\
&\quad= 
e^{N\theta_n} \omega_1 \| u_n \|_2^2
+\left(e^{(N-2)\theta_n} -e^{N\theta_n} \right)\omega_1 \| u_n \|_2^2
+(\alpha + \xi_{e^{-2\theta_n}\l}) ( e^{(N-2)\theta_n} - e^{2(N-2)\theta_n}) |q_n|^2 \\
&\qquad +(\xi_\l - \xi_{e^{-2\theta_n}\l})e^{(N-2)\theta_n} |q_n|^2
+e^{N\theta_n}\irn g(u_n) \overline{u_n} \,dx+o_n(1) \\
&\quad = e^{N\theta_n} \irn h(u_n) \overline{u_n} \,dx
-e^{N\theta_n}\irn \!\left(h(u_n) \overline{u_n} - \omega_1 |u_n|^2
-g(u_n) \overline{u_n} \right)dx+o_n(1),
\end{align*} 
arguing in \cite{HIT}, we deduce that
\begin{multline*}
\limsup_{n \to \infty} \left( \|\n \phi_n \|_2^2
+\l\| \phi_n \|_2^2
+(\omega_1-\l)\| u_n \|_2^2
+(\a +\xi_\l)|q_n|^2\right) \\
\le \|\n \phi_0 \|_2^2
+\l\| \phi_0 \|_2^2
+(\omega_1-\l)\| u_0 \|_2^2
+(\a +\xi_\l)|q_0|^2
\end{multline*}
and, by the weak lower semi-continuity of the norm, 
we conclude that $u_n \to u_0$ strongly in $\Ha$.
Then by \eqref{ps-ii} of Proposition \ref{prop:ps-seq}, 
we deduce that $I(u_0)=\s$ and hence $u_0$ is nontrivial.

When $N=3$, $\alpha \red{\le} 0$ or $N=2$, 
we know that any $(PS)_{\sigma}$-sequence is bounded by Lemma \ref{lem:5.7}.
Then working on $I'(u_n)[u_n]$, we arrive at $u_n \to u_0$ in $\Ha$ and $I(u_0)=\sigma$.
This completes the proof. 
\end{proof}

Next we aim to prove that $q(u_0) \ne 0$
for the nontrivial solution $u_0$ obtained in Proposition \ref{pr:3.7},
which implies that our solution is actually singular.
For this purpose, let us recall some facts for the scalar field equation
\begin{equation} \label{eq:4.7}
- \Delta u =g(u) \qquad \text{in} \ \R^N
\end{equation}
in the complex-valued setting. 
To clarify the difference with \ef{eq}, 
let us write the energy functional $I_0$ 
associated with \ef{eq:4.7} as
\[
I_0(u) = \frac{1}{2} \| \nabla u \|_2^2 - \irn G(u) \,dx, 
\qquad \text{for } u \in H^1(\RN,\C).
\]
We also denote by $m_0$ the ground state energy level for $I_0$, namely,
\[
m_0 := \inf \left\{ I_0(u) \,:\, u \in H^1(\R^N,\C) \setminus \{ 0 \}, I_0'(u)=0 \right\}.
\]
If $m_0$ is achieved by some $u \in H^1(\RN,\C)$,
$u$ is said to be a \textit{ground state solution} of \ef{eq:4.7}.

Then we have the following.

\begin{lemma} \label{lem:4.8}
Assume \ef{g1}-\ef{g4}. Then the following hold:

\begin{enumerate} [label=(\roman{*}),ref=\roman{*}]

\item there exists $w \in H^1(\RN:\C)$ such that 
$I_0(w)=m_0$ and $I'_0(w)=0$;

\item\label{4.8-2} any ground state solution of \ef{eq:4.7} is
real-valued and positive on $\RN$, up to phase shift;

\item\label{4.8-3} there exists $\gamma_0 \in \Gamma_{0,{\rm real}}$ such that
\begin{equation} \label{eq:4.8}
\max_{t \in [0,1]} I_0 \big( \gamma_0(t) \big) = m_0,
\end{equation}
where
$\dis 
\Gamma_{0,{\rm real}} := \left\{ \gamma \in C \big( [0,1], H^1(\R^N,\R) \big) \,:
\,
\gamma(0)=0, I_0 \big( \gamma(1) \big) <0 \right\}$.
\end{enumerate}

\end{lemma}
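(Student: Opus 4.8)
The plan is to reduce the whole statement to the classical Berestycki--Lions theory for the real scalar field equation \eqref{eq:4.7}, using one elementary fact about the complex setting: for $u\in H^1(\RN,\C)$ one has $|u|\in H^1(\RN,\R)$ with the diamagnetic inequality $\|\n|u|\|_2\le\|\n u\|_2$, while $G(u)=G(|u|)$ by the definition of the extension; hence $I_0(u)\ge I_0(|u|)$ and $\irn G(u)\,dx=\irn G(|u|)\,dx$. For (i), assumptions \eqref{g1}--\eqref{g4} are a form of the Berestycki--Lions hypotheses, so when $N=3$ the minimization of $I_0$ over the Pohozaev manifold $\P_0:=\{u\in H^1(\RN,\R)\setminus\{0\}:P_0(u)=0\}$, with $P_0(u):=\tfrac{N-2}{2}\|\n u\|_2^2-N\irn G(u)\,dx$, is attained by a real, positive, radially decreasing solution $w$ of \eqref{eq:4.7} (see \cite{BL}); when $N=2$ the same holds with the constraint $\{u\ne0:\irn G(u)\,dx=0\}$ in place of $\P_0$, by the planar counterpart of that theory. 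Since every nontrivial solution of \eqref{eq:4.7} — real or complex — satisfies the Pohozaev identity $P_0(u)=0$, one gets $I_0(u)\ge\inf_{\P_0}I_0=I_0(w)$ if $N=3$ and $I_0(u)=\tfrac12\|\n u\|_2^2\ge m_0$ if $N=2$; thus $m_0=I_0(w)$ is attained, proving (i).

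For (ii), let $w$ be any ground state and set $v:=|w|\ge0$, $v\not\equiv0$. The identity $P_0(w)=0$ gives $\irn G(w)\,dx=\tfrac{N-2}{2N}\|\n w\|_2^2>0$ if $N=3$, and $\irn G(w)\,dx=0$ if $N=2$. When $N=3$, for the fixed positive value $\irn G(v)\,dx=\irn G(w)\,dx$ the quantity $\max_{t>0}I_0(v(\cdot/t))$ depends increasingly on $\|\n v\|_2^2$ and the maximizing dilation of $v$ lies on $\P_0$; hence
\[
m_0\le\max_{t>0}I_0(v(\cdot/t))\le\max_{t>0}I_0(w(\cdot/t))=I_0(w)=m_0,
\]
so equality holds throughout: $\|\n v\|_2=\|\n w\|_2$ and the maximizing dilation of $v$ is a minimizer of $I_0$ over $\P_0$, hence, by the Lagrange multiplier analysis of \cite{BL} (the multiplier normalized to $1$ via the Pohozaev identity), a nonnegative solution of \eqref{eq:4.7}. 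When $N=2$ one argues the same way: $v$ lies in $\{u\ne0:\irn G(u)\,dx=0\}$, so $m_0\le\tfrac12\|\n v\|_2^2\le\tfrac12\|\n w\|_2^2=m_0$, again giving $\|\n v\|_2=\|\n w\|_2$ and, up to a fixed dilation, a nonnegative solution of \eqref{eq:4.7}. By elliptic regularity that solution lies in $W^{2,q}_{\rm loc}(\RN)$, and the strong maximum principle applied to $-\Delta z+Cz=g(z)+Cz\ge0$ — with $C$ chosen from \eqref{g2} so that $s\mapsto g(s)+Cs$ is nonnegative on $[0,\|z\|_\infty]$ — forces $z>0$, hence $v=|w|>0$ on $\RN$. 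On the other hand, equality in the diamagnetic inequality forces $\im(\overline{w}\,\n w)=0$ a.e.\ on $\{w\ne0\}$, i.e.\ $\arg w$ is locally constant there; since $\{w\ne0\}=\RN$ is connected, $\arg w$ is a global constant $\theta_0$, and $e^{-i\theta_0}w=|w|$ solves $-\Delta|w|=g(|w|)$. Thus, up to a phase shift, $w$ is real-valued and positive, proving (ii).

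For (iii), let $w$ be the real positive radial ground state of (i)--(ii). If $N=3$, choose $T>1$ so large that $I_0(w(\cdot/T))=\tfrac{T}{2}\|\n w\|_2^2-T^3\irn G(w)\,dx<0$ (possible since $\irn G(w)\,dx>0$), and set $\gamma_0(t):=w(\cdot/(tT))$ for $t\in(0,1]$, $\gamma_0(0):=0$; continuity at $0$ follows from $\|w(\cdot/(tT))\|_{H^1}\to0$ as $t\to0^+$. Because $w\in\P_0$, the map $s\mapsto I_0(w(\cdot/s))=\tfrac{s^{N-2}}{2}\|\n w\|_2^2-s^N\irn G(w)\,dx$ attains its maximum over $s>0$ exactly at $s=1$ with value $I_0(w)=m_0$; hence $\gamma_0\in\Gamma_{0,{\rm real}}$ and $\max_{t\in[0,1]}I_0(\gamma_0(t))=m_0$, which is \eqref{eq:4.8}. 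If $N=2$ the spatial dilation leaves $I_0$ invariant (because $\irn G(w)\,dx=0$), so this particular path does not descend below $0$; instead one uses the mountain pass path constructed in \cite{jj} (and the reworking of the Berestycki--Lions argument therein), which yields a path in $\Gamma_{0,{\rm real}}$ whose maximum equals the least energy level $m_0$.

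The delicate step is (ii): one must upgrade the soft inequality $I_0(|w|)\le I_0(w)$ to an equality and then read off the phase rigidity from equality in the diamagnetic inequality. This is exactly where the variational characterization of $m_0$ — via the Pohozaev manifold when $N=3$ and via the zero potential-energy constraint when $N=2$ — is indispensable; once that is in place, positivity of $|w|$ is a routine strong-maximum-principle argument, and (i) and (iii) are essentially bookkeeping around \cite{BL} (and \cite{jj} for $N=2$).
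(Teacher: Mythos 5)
Your proposal is correct and follows essentially the same route as the paper: existence from Berestycki--Lions, then the diamagnetic inequality combined with the variational (Pohozaev/mountain-pass) characterization of $m_0$ to show $|w|$ is again a ground state, positivity by the strong maximum principle, phase rigidity from equality in the diamagnetic inequality, and a dilation path for (iii) — the paper simply compresses all of this into citations of \cite{BL}, \cite{JT} and \cite{AW, CiJSe, CJS}, while you spell the arguments out. The only quibble is that for the $N=2$ path in (iii) you cite \cite{jj} (the prescribed-norm paper) where the relevant source is the Jeanjean--Tanaka mountain-pass characterization \cite{JT}.
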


\begin{proof}
First, by the classical result due to \cite{BL}, 
there exists a ground state solution $w \in H^1(\R^N,\R)$.
Moreover by the variation characterization of the ground state energy level
established in \cite{JT},
arguing as in \cite{AW, CiJSe, CJS},
we see that if $u$ is a ground state solution, 
then $|u|$ is also a ground state solution.
Then we are able to show that any ground state solution of \ef{eq:4.7}
has the form $u(x)= e^{i\theta} |u(x)|$ for $\theta \in \R$
and the positivity follows by the maximum principle.

Finally, \eqref{4.8-3} is a direct consequence of the result in \cite[Theorem 0.2]{JT}.
\end{proof}

Using Lemma \ref{lem:4.8}, we are able to prove the following.

\begin{proposition} \label{prop:4.9}
Let $u_0 \in \Ha$ be the nontrivial critical point of $I$ obtained in 
Proposition \ref{pr:3.7}. 
Then it holds that $q(u_0) \ne 0$.
\end{proposition}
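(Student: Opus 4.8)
The plan is to argue by contradiction: assume $q(u_0)=0$ and show that then $u_0=\phi_\l$ is forced to be a ground state of the limiting scalar field equation \ef{scalar}, which is incompatible with the boundary condition \ef{bdry}. Since $u_0$ is a critical point of $I$, by Proposition \ref{prop:2.8} it is, in any case, a nontrivial weak solution of \ef{eq}. The first step is to observe that if $q(u_0)=0$ then $u_0=\phi_\l\in\H$, the norm $\|\cdot\|_{\Hal}$ reduces to the $H^1$-norm, $I(u_0)=I_0(u_0)$, and \ef{eq} becomes precisely \ef{scalar}; hence $u_0$ is a nontrivial critical point of $I_0$ on $H^1(\RN,\C)$, so that $I_0(u_0)\ge m_0$. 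Combined with $I(u_0)=\s$ from Proposition \ref{pr:3.7}, this already gives $\s\ge m_0$.

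The second, and crucial, step is the reverse inequality $\s\le m_0$, which I would obtain by exhibiting a \emph{radial} mountain pass path for $I$ of maximal energy $m_0$. Note first that \ef{g5} implies \ef{g4} (super-quadraticity of $H$ forces $G(\zeta)>0$ for $\zeta$ large), so Lemma \ref{lem:4.8} is available in all the cases covered by Proposition \ref{pr:3.7}, and by \cite{BL} there is a radial ground state $w$ of \ef{scalar}. Since $w\in\Hr\subset\Har$ with $q(w)=0$ and $I$ coincides with $I_0$ on $\H$, any radial path $\g$ joining $0$ to a point of negative $I_0$-energy belongs to $\G$, whence $\s\le\max_{t\in[0,1]}I_0(\g(t))$. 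For $N=3$ I would take $\g(t)=w(\cdot/t)$, extended by $\g(0)=0$ (continuity in $H^1$ at $t=0$ uses $N=3$): the Pohozaev identity for $w$ gives $I_0\big(w(\cdot/t)\big)=\tfrac{A}{6}\,(3t-t^3)$ with $A=\|\n w\|_2^2$, whose maximum over $t>0$ equals $\tfrac A3=I_0(w)=m_0$ and which is negative for $t$ large. For $N=2$, where the Pohozaev identity forces $\irn G(w)\,dx=0$ so that rescalings no longer lower the energy, I would instead invoke the radial mountain pass path underlying the least energy characterization of \cite{JT} (Lemma \ref{lem:4.8}-\ef{4.8-3}). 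In either case $\s\le m_0$, hence $\s=m_0$.

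With $\s=m_0$ in hand, $u_0$ is a nontrivial critical point of $I_0$ with $I_0(u_0)=m_0$, i.e. a ground state of \ef{scalar}. By Lemma \ref{lem:4.8}-\ef{4.8-2}, after a phase shift $u_0$ is real-valued and strictly positive on $\RN$, so $|\phi_\l(0)|=|u_0(0)|>0$. On the other hand, $u_0$ being a nontrivial weak solution of \ef{eq}, Theorem \ref{main2} (see also Proposition \ref{prop:5.1}) guarantees that $\phi_\l$ is continuous at the origin and satisfies \ef{bdry}, which with $q(u_0)=0$ reads $|\phi_\l(0)|=(\a+\xi_\l)\,|q(u_0)|=0$. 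This contradicts $|\phi_\l(0)|>0$, and therefore $q(u_0)\ne0$.

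The step I expect to be the main obstacle is the inequality $\s\le m_0$, which amounts to relating the (radial) mountain pass level of $I$ to the least energy level of the limiting scalar field problem. For $N=3$ this reduces to a routine Pohozaev rescaling of the ground state, but for $N=2$ the vanishing of $\irn G(w)\,dx$ prevents one from producing a descending path by rescaling the ground state, and one must fall back on the mountain pass description of the least energy level from \cite{JT}, carried out within the class of radial functions — this is where the argument is genuinely delicate.
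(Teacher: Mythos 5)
Your proposal is essentially the same as the paper's proof. Both argue by contradiction from $q(u_0)=0$, establish $\sigma\le m_0$ via a radial mountain-pass path with maximal energy $m_0$, deduce that $\phi_\l$ is a ground state of the limit scalar field equation, invoke Lemma~\ref{lem:4.8}-\eqref{4.8-2} (positivity up to phase shift), and derive a contradiction with the boundary condition \eqref{bdry}, since $q(u_0)=0$ would force $\phi_\l(0)=0$. The only differences are expository: the paper obtains $\sigma\le m_0$ directly from Lemma~\ref{lem:4.8}-\eqref{4.8-3} in all cases, whereas you additionally spell out the rescaling computation for $N=3$ (which is indeed how the path in \cite{JT} is built) before falling back on \cite{JT} for $N=2$. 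Your observation that \eqref{g5} implies \eqref{g4} — so that Lemma~\ref{lem:4.8} is available also in the regimes covered by Theorem~\ref{thm:1.4} — is a small but correct point that the paper leaves implicit.
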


\begin{proof}
First we claim that
\begin{equation} \label{eq:4.10}
\sigma \le m_0,
\end{equation}
where $\sigma$ is the mountain pass value of $I$ defined in \ef{MP}.
In fact, since $I(u) = I_0(u)$ for all $u \in H^1(\R^N,\C)$
and $\gamma_0 \in \Gamma_{0,{\rm real}} \subset \Gamma$, 
one finds from \ef{eq:4.8} that
\[
\sigma = \inf_{\gamma \in \Gamma} \max_{t \in [0,1]} I\big( \gamma(t) \big)
\le \max_{t \in [0,1]} I_0 \big( \gamma_0(t) \big) = m_0.
\]

Now by Proposition \ref{pr:3.7}, 
we know that $I(u_0) = \sigma$ and $I'(u_0)=0$. 
If $q(u_0)=0$, it follows that $u_0= \phi_\l \in H^1(\R^N,\C) \setminus \{0 \}$, 
yielding that
\[
I(u_0) = I_0(\phi_\l) \quad \text{and} \quad
I'(u_0)|_{H^1(\RN)} = I_0'(\phi_\l).
\]
This implies that
\[
\sigma = I(u_0) = I_0(\phi_\l) \ge m_0.
\]
Thus from \ef{eq:4.10}, we find that
\[
I_0(\phi_\l) = m_0 \quad \text{and} \quad I_0'(\phi_\l) =0,
\]
namely, $\phi_\l$ is a ground state solution of \ef{eq:4.7}.
Then by Lemma \ref{lem:4.8}-\eqref{4.8-2}, 
it holds that $\phi_\l$ is real-valued and positive on $\RN$, up to phase shift.

On the other hand, since $u_0$ is a weak solution of \ef{eq},
we can see by Proposition \ref{prop:5.1} that 
$\phi_{\l}\in H^1(\RN)\cap C(\RN)$ and so, by Proposition \ref{prop:2.8},
$u_0$ satisfies the boundary condition \ef{bdry}.
But if $q(u_0)=0$, \ef{bdry} shows that $\phi_\l(0)=0$,
contradicting to the positivity of $\phi_\l$.
Thus we conclude that $q(u_0) \ne 0$, as claimed.
\end{proof}

\begin{remark} \label{rem:4.10}
Proposition \ref{prop:4.9} heavily relies on the variational characterization of $u_0$.
It is not clear whether there exists another nontrivial solution $u$ of \ef{eq}
with $q(u) \ne 0$.
Moreover since we don't know if the mountain pass solution $u_0$ is 
a ground state solution for \ef{eq}, we cannot conclude that 
the strict inequality in \ef{eq:4.10} holds, 
which was performed in \cite{ABCT2, ABCT3} 
for the case $g(s)=-\omega s +|s|^{p-2}s$.  
\end{remark}

Now Theorems \ref{main} and \ref{thm:1.4} follow by 
Propositions \ref{prop:2.8}, \ref{pr:3.7} and \ref{prop:4.9}.

\subsection*{Acknowledgment}
The authors are grateful to the anonymous referees for 
carefully reading the manuscript and providing us valuable comments.

The first author is partly financed by European Union - Next Generation EU - 
PRIN 2022 PNRR ``P2022YFAJH Linear and Nonlinear PDE's: 
New directions and Applications", by INdAM - GNAMPA Project 2024 
''Metodi variazionali e topologici per alcune equazioni di Schrodinger nonlineari''
and by the Italian Ministry of University and Research under the Programme 
''Department of Excellence'' Legge 232/2016 (Grant No. CUP - D93C23000100001).
The second author is supported by 
JSPS KAKENHI Grant Numbers 
JP21K03317, JP24K06804.

This work has been partially carried out during a stay A.P. in Kyoto. 
He would like to express his deep gratitude to the 
Department of Mathematics of the Faculty of Science, Kyoto Sangyo University, 
for the support and warm hospitality.

\end{document}